\numberwithin{equation}{section}
\def\demo{\noindent{\it Proof. }}
\newtheorem{theorem}{Theorem}[section]
\newtheorem{lemma}[theorem]{Lemma}
\newtheorem{proposition}[theorem]{Proposition}
\newtheorem{corollary}[theorem]{Corollary}
\newtheorem{conjecture}[theorem]{Conjecture}
\theoremstyle{definition}
\newtheorem{definition}[theorem]{Definition} 
\newtheorem{remark}[theorem]{Remark}
\newtheorem{example}[theorem]{Example}
\begin{document}


\title[Complete intersections]{Minimum distance functions of complete
intersections} 

\thanks{The first and third author were supported by SNI. The second author was
supported by CONACyT}
\author{Jos\'e Mart\'\i nez-Bernal}
\address{
Departamento de
Matem\'aticas\\
Centro de Investigaci\'on y de Estudios
Avanzados del
IPN\\
Apartado Postal
14--740 \\
07000 Mexico City, D.F.
}
\email{jmb@math.cinvestav.mx}

\author{Yuriko Pitones}
\address{
Departamento de
Matem\'aticas\\
Centro de Investigaci\'on y de Estudios
Avanzados del
IPN\\
Apartado Postal
14--740 \\
07000 Mexico City, D.F.
}
\email{ypitones@math.cinvestav.mx}

\author{Rafael H. Villarreal}
\address{
Departamento de
Matem\'aticas\\
Centro de Investigaci\'on y de Estudios
Avanzados del
IPN\\
Apartado Postal
14--740 \\
07000 Mexico City, D.F.
}
\email{vila@math.cinvestav.mx}

\subjclass[2010]{Primary 13P25; Secondary 94B60, 11T71.}  
\begin{abstract} 
We study the {\it minimum distance function\/} of a complete
intersection graded ideal 
in a polynomial 
ring with coefficients in a field. For graded ideals of dimension
one, whose initial ideal is a complete
intersection, we use the footprint function to give a sharp 
lower bound for the minimum distance
function. 
Then we show some applications to coding theory.
\end{abstract}

\maketitle 

\section{Introduction}\label{intro-section}
Let $S=K[t_1,\ldots,t_s]=\oplus_{d=0}^{\infty} S_d$ be a polynomial ring over
a field $K$ with the standard grading and let $I\neq(0)$ be a graded ideal
of $S$.  The {\it degree\/} or {\it multiplicity\/} of $S/I$ is
denoted by $\deg(S/I)$. Fix a graded monomial order $\prec$ on $S$ 
and let ${\rm in}_\prec(I)$ be the initial ideal of $I$. 

The {\it footprint\/} of $S/I$ or {\it Gr\"obner \'escalier\/} of $I$, denoted
$\Delta_\prec(I)$, is the set of all monomials of $S$ not in the 
ideal ${\rm in}_\prec(I)$ \cite[p.~13, p.~133]{escalier}. This notion
occurs in other branches of
mathematics under different names; see \cite[p.~6]{cocoa-book} 
for a list of alternative names. 

Given an integer $d\geq 1$, let $\mathcal{M}_{\prec, d}$ be the set of 
all zero-divisors of $S/{\rm in}_\prec(I)$ of degree $d$ that
are in $\Delta_\prec(I)$, 
and let $\mathcal{F}_{\prec,d}$ be the set of all zero-divisors of $S/I$ that
are not zero and are a $K$-linear combination of monomials in $\Delta_\prec(I)$ of
degree $d$. 

The {\it footprint
function\/} of $I$, 
denoted ${\rm fp}_I$, is the function ${\rm fp}_I\colon
\mathbb{N}_+\rightarrow \mathbb{Z}$ given
by 
$$
{\rm fp}_I(d):=\left\{\begin{array}{ll}\deg(S/I)-\max\{\deg(S/({\rm
in}_\prec(I),t^a))\,\vert\,
t^a\in\mathcal{M}_{\prec, d}\}&\mbox{if }\mathcal{M}_{\prec, d}\neq\emptyset,\\
\deg(S/I)&\mbox{if }\mathcal{M}_{\prec, d}=\emptyset,
\end{array}\right.
$$
and the {\it minimum distance function\/} of $I$, denoted $\delta_I$,
is the function  
$\delta_I\colon \mathbb{N}_+\rightarrow \mathbb{Z}$ given by 
$$
\delta_I(d):=\left\{\begin{array}{ll}\deg(S/I)-\max\{\deg(S/(I,f))\vert\,
f\in\mathcal{F}_{\prec, d}\}&\mbox{if }\mathcal{F}_{\prec,d}\neq\emptyset,\\
\deg(S/I)&\mbox{if\ }\mathcal{F}_{\prec,d}=\emptyset.
\end{array}\right.
$$

These two functions were introduced and studied
in \cite{hilbert-min-dis}. Notice that $\delta_I$ is independent
of the monomial order $\prec$ (see Lemma~\ref{delta-indep-order}). 
To compute $\delta_I$ is a difficult problem but to compute 
${\rm fp}_I$ is much easier. 

We come to the main result of this paper which gives an explicit lower bound for 
$\delta_I$ and a formula for ${\rm fp}_I$ for a family of complete
intersection graded ideals: 

\noindent {\bf Theorem~\ref{footprint-ci}}{\it\   
If\, the initial ideal ${\rm in}_\prec(I)$ of $I$ is a complete intersection of
height $s-1$ generated by 
$t^{\alpha_2},\ldots,t^{\alpha_s}$, with $d_i=\deg(t^{\alpha_i})$ and 
$1\leq d_i\leq d_{i+1}$ for $i\geq 2$, then 
$$
\delta_I(d)\geq {\rm fp}_I(d)=\left\{\begin{array}{ll}(d_{k+2}-\ell)d_{k+3}\cdots d_s
&\mbox{if }\ 
d\leq \sum\limits_{i=2}^{s}\left(d_i-1\right)-1,\\ 
1 &\mbox{if\ }\ d\geq \sum\limits_{i=2}^{s}\left(d_i-1\right),
\end{array}\right.
$$
where $0\leq k\leq s-2$ and $\ell$ are integers such that 
$d=\sum_{i=2}^{k+1}\left(d_i-1\right)+\ell$ and $1\leq \ell \leq
d_{k+2}-1$.}

An important case of this theorem, from the viewpoint of
applications, 
is when $I$ is the vanishing 
ideal of a finite set of projective points over a finite
field (see the discussion below about the connection of 
 ${\rm fp}_I$ and $\delta_I$ with coding theory). If $I$ is a 
complete intersection monomial ideal of dimension $1$, then
$\delta_I(d)={\rm fp}_I(d)$ for $d\geq 1$ (see
Proposition~\ref{geil-carvalho-monomial}), but this case is only of 
theoretical interest because, by
Proposition~\ref{classification-vanishing-monomial}, 
a monomial ideal is a vanishing ideal only in
particular cases.  

Let $I\subset S$ be a graded ideal such that $L={\rm in}_\prec(I)$ is a 
complete intersection of dimension $1$. We give a formula for the 
degree  of $S/(L,t^a)$ when $t^a$ is in $\mathcal{M}_{\prec, d}$, that
is, $t^a$ is not in $L$ and is a zero-divisor of $S/L$. By an easy
classification of the complete intersection property of $L$ (see
Lemma~\ref{ci-monomial-dim1})  
there are basically two cases to consider. One of
them is Lemma~\ref{dec28-15}, and the other is the following: 

\noindent {\bf Lemma~\ref{dec20-15}}{\it\  If $L={\rm in}_\prec(I)$ is generated by 
$t_2^{d_2},\ldots ,t_s^{d_s}$ and $t^a=t_1^{a_1}\cdots
t_s^{a_s}$ is in $\mathcal{M}_{\prec, d}$, then  
$$
\deg(S/(L,t^a))=d_2\cdots d_s- (d_2-a_2)\cdots(d_s-a_s).
$$
}
\quad To show our main result we use the formula for the degree of
the ring $S/(L,t^a)$, and then use Proposition~\ref{aug-28-15} to
bound the degrees uniformly. The proof of the main result takes place
in an abstract algebraic setting with no reference to vanishing
ideals or finite fields.  

The formulas for the degree are useful in the following setting.  
If $I=I(\mathbb{X})$ is the vanishing ideal of a finite set
$\mathbb{X}$ of
projective points, and ${\rm in}_\prec(I)$ is generated by
$t_2^{d_2},\ldots,t_s^{d_s}$, then Lemma~\ref{dec20-15} can be used to
give  upper bounds for
the number of zeros in $\mathbb{X}$ of homogeneous polynomials of $S$. In fact, if  
$f\in\mathcal{F}_{\prec,d}$  and 
${\rm in}_\prec(f)=t_1^{a_1}\cdots t_s^{a_s}$, then ${\rm
in}_\prec(f)$ is in $\mathcal{M}_{\prec,d}$, and by
Corollary~\ref{bounds-for-deg-init-ci-case-1} one has:  
$$
|V_{\mathbb{X}}(f)|\leq d_2\cdots d_s- (d_2-a_2)\cdots(d_s-a_s),
$$
where $V_\mathbb{X}(f)$ is the set of zeros or variety 
of $f$ in $\mathbb{X}$. 
This upper bound depends on the
exponent of the leading term of $f$. A more complex upper bound 
is obtained when the initial ideal of $I(\mathbb{X})$ is as 
in Lemma~\ref{ci-monomial-dim1}(ii). In this case one uses
the formula for the degree given in 
Lemma~\ref{dec28-15}.

The interest in studying ${\rm fp}_I$ and $\delta_I$ comes from
algebraic coding theory. 
Indeed, if $I=I(\mathbb{X})$ is the vanishing
ideal of a finite subset $\mathbb{X}$ of a projective space $\mathbb{P}^{s-1}$ over a
finite field $K=\mathbb{F}_q$, then the minimum distance
$\delta_\mathbb{X}(d)$ 
of the corresponding projective Reed-Muller-type code 
is  equal to $\delta_{I(\mathbb{X})}(d)$, 
and ${\rm fp}_{I(\mathbb{X})}(d)$ is a lower bound for
$\delta_\mathbb{X}(d)$ for $d\geq 1$ (see Theorem~\ref{min-dis-vi}
and Lemma~\ref{dec30-15}). Therefore, one has the formula: 
$$
\delta_{I(\mathbb{X})}(d)=\deg(S/I(\mathbb{X}))-\max\{|V_{\mathbb{X}}(f)|\colon
f\not\equiv 0;
f\in S_d\},\leqno(\ref{min-dis-vi-coro})
$$
where $f\not\equiv 0$ means that $f$ is not 
the zero function on $\mathbb{X}$. Our abstract study of the minimum
distance and footprint
functions provides fresh techniques to study $\delta_\mathbb{X}(d)$. 

It is well-known that the degree of $S/I(\mathbb{X})$ is equal
to $|\mathbb{X}|$ \cite[Lecture 13]{harris}. 
Hence, using Eq.~(\ref{min-dis-vi-coro}) and our main 
result, we get the following uniform upper bound for the number of
zeros of all polynomials $f\in S_d$ that do not vanish at all points of
$\mathbb{X}$.

\noindent {\bf Corollary~\ref{uniform-bound-for-zeros}}{\it\  
If\, the initial ideal ${\rm in}_\prec(I(\mathbb{X}))$ is a complete
intersection generated by 
$t^{\alpha_2},\ldots,t^{\alpha_s}$, with $d_i=\deg(t^{\alpha_i})$ and 
$1\leq d_i\leq d_{i+1}$ for $i\geq 2$, then
$$
|V_\mathbb{X}(f)|\leq 
|\mathbb{X}|-\left(d_{k+2}-\ell\right)d_{k+3}\cdots
d_s,\leqno(\ref{eq-unif-b})
$$
for any $f\in S_d$ that does not vanish at all point of $\mathbb{X}$,
where $0\leq k\leq s-2$ and $\ell$ are integers such that 
$d=\sum_{i=2}^{k+1}\left(d_i-1\right)+\ell$ and $1\leq \ell \leq
d_{k+2}-1$.}

This result gives a tool for finding good uniform upper bounds for
the number of zeros in $\mathbb{X}$ of polynomials over 
finite fields. This is a problem of fundamental interest in 
algebraic coding theory \cite{sorensen} and 
algebraic geometry \cite{schmidt}.  We leave as an open question 
whether this uniform bound is optimal, that is, 
whether the equality is attained for some polynomial $f$.

Toh\v{a}neanu and Van Tuyl 
conjectured \cite[Conjecture~4.9]{tohaneanu-vantuyl} that if the
vanishing ideal $I(\mathbb{X})$ is a complete intersection 
generated by polynomials of degrees
$d_2,\ldots,d_s$ and $d_i\leq d_{i+1}$ for all $i$, then 
$\delta_\mathbb{X}(1)\geq (d_2-1)d_3\ldots d_s$. By 
Corollary~\ref{uniform-bound-for-zeros} this conjecture is true if 
${\rm in}_\prec(I(\mathbb{X}))$ is a complete intersection. We leave
as another open question whether Corollary~\ref{uniform-bound-for-zeros} is
true if we only assume that $I(\mathbb{X})$ is a complete intersection
(cf. Proposition~\ref{jan2-16}).

To illustrate the use of 
Corollary~\ref{uniform-bound-for-zeros} in a concrete situation,  
consider the lexicographical order on $S$ with $t_1\prec\cdots\prec
t_s$ and a {\it projective torus\/} over a 
finite field $\mathbb{F}_q$ with $q\neq 2$: 
$$
\mathbb{T}=\{[(x_1,\ldots,x_s)]\in\mathbb{P}^{s-1}\vert\, x_i\in
\mathbb{F}_q^*\, \mbox{ for }i=1,\ldots,s\},
$$
where $\mathbb{F}_q^*=\mathbb{F}_q\setminus\{0\}$. 
As $I(\mathbb{T})$ is generated by the Gr\"obner basis 
$\{t_i^{q-1}-t_1^{q-1}\}_{i=2}^s$, its initial ideal is a complete
intersection generated by 
$t_2^{q-1},\ldots,t_s^{q-1}$. Therefore, noticing that 
$\deg(S/I(\mathbb{T}))$ is equal to $(q-1)^{s-1}$ and setting $d_i=q-1$ for
$i=2,\ldots,s$ in Eq.~(\ref{eq-unif-b}), we obtain that any  
homogeneous polynomial $f$ of degree 
$d$, not vanishing at all points of $\mathbb{T}$, has at most
$$
(q-1)^{s-1}-(q-1)^{s-(k+2)}(q-1-\ell) 
$$
zeros in $\mathbb{T}$ if $d\leq (q-2)(s-1)-1$, and $k$ and $\ell$ are 
the unique integers such that $k\geq 0$,
$1\leq \ell\leq q-2$ and $d=k(q-2)+\ell$. This uniform bound was given
in \cite[Theorem 3.5]{ci-codes} and it is seen that 
this bound is in fact optimal by constructing an appropriate
polynomial $f$. 

If ${\rm fp}_I(d)=\delta_I(d)$ for $d\geq 1$, we say that $I$ is a
{\it Geil--Carvalho ideal\/}. For vanishing ideals over finite fields,
this notion is essentially another way of saying that the bound of
Eq.~(\ref{eq-unif-b}) is optimal. The first interesting family of ideals 
where equality holds is due to Geil \cite[Theorem~2]{geil}. 
His result essentially shows that 
${\rm fp}_I(d)=\delta_I(d)$ for $d\geq 1$ when $\prec$ is a graded
lexicographical order and $I$ is the 
homogenization of the vanishing ideal of the affine space
$\mathbb{A}^{s-1}$ over a finite field $K=\mathbb{F}_q$. Recently, 
Carvalho \cite[Proposition~2.3]{carvalho} extended this result by
replacing $\mathbb{A}^{s-1}$ by a cartesian product of subsets 
of $\mathbb{F}_q$. In this case the underlying Reed-Muller-type code
is called an affine cartesian code and an explicit formula for the
minimum distance was first given in 
\cite{geil-thomsen,cartesian-codes}. In a very recent paper, Bishnoi,
Clark, Potukuchi, and Schmitt give another proof of this formula 
\cite[Theorem~5.2]{clark} using a result of Alon and 
F\"uredi \cite[Theorem~5]{alon-furedi} (see also \cite{clark1}). 

As the two most relevant applications of our main result to algebraic
coding
theory, we recover 
the formula
for the minimum distance of an affine cartesian code given in
\cite[Theorem~3.8]{cartesian-codes} and
\cite[Proposition~5]{geil-thomsen}, 
and the fact that the
homogenization of the corresponding vanishing ideal is a 
Geil--Carvalho ideal \cite{carvalho} (see
Corollary~\ref{lopez-renteria-vila}). 

Then we present an extension of a
result of Alon and F\"uredi 
\cite[Theorem~1]{alon-furedi}---in terms of the regularity of a
vanishing ideal---about coverings of the cube 
$\{0,1\}^n$ by affine hyperplanes, that can be applied to any finite subset 
of a projective space whose vanishing ideal has a complete intersection
initial ideal (see Corollary~\ref{jan1-16} and
Example~\ref{covering-by-hyperplanes-example}). 

Finally, using {\em Macaulay\/}$2$ \cite{mac2}, 
we exemplify how some of our results
can be used in practice, and show that 
the vanishing ideal of $\mathbb{P}^2$ over $\mathbb{F}_2$ 
is not Geil--Carvalho by computing all possible initial ideals (see
Example~\ref{not-geil-carvalho}).  

In Section~\ref{prelim-section} we 
introduce projective Reed-Muller-type codes and 
present some of the results and terminology that will be needed
in the paper. For all unexplained
terminology and additional information,  we refer to 
\cite{Vogel} (for deeper advances on the knowledge of the degree), 
\cite{CLO} (for the theory of Gr\"obner bases), \cite{AM,Eisen,Sta1} 
(for commutative algebra and Hilbert functions), 
and \cite{MacWilliams-Sloane,tsfasman} (for the theory of
error-correcting codes and linear codes).

\section{Preliminaries}\label{prelim-section}

In this section, we 
present some of the results that will be needed throughout the paper
and introduce some more notation. All results of this
section are well-known. 

Let $S=K[t_1,\ldots,t_s]=\oplus_{d=0}^{\infty}S_d$ be a graded
polynomial ring over a field
$K$ with the standard grading and let $(0)\neq I\subset S$ be a
graded ideal.   
The {\it Hilbert function} of $S/I$ is: 
$$
H_I(d):=\dim_K(S_d/I_d),\ \ \ d=0,1,2,\ldots
$$
where $I_d=I\cap S_d$. By the dimension
of $I$ we mean the Krull dimension of $S/I$.  

The {\it degree\/} or {\it multiplicity\/} of $S/I$ is the 
positive integer 
$$
\deg(S/I):=\left\{\begin{array}{ll}(k-1)!\,\displaystyle\lim_{d\rightarrow\infty}{H_I(d)}/{d^{k-1}}
&\mbox{if }k\geq 1,\\
\dim_K(S/I) &\mbox{if\ }k=0,
\end{array}\right.
$$ 
and the {\it regularity of the Hilbert function\/} of $S/I$, or simply the
\emph{regularity} of $S/I$, denoted 
${\rm reg}(S/I)$, is the least integer $r\geq 0$ such that
$H_I(d)$ is equal to $h_I(d)$ for $d\geq r$, where $h_I$ is the
Hilbert polynomial of $S/I$. 

Let  $\prec$ be a monomial order on $S$ and let $(0)\neq I\subset S$
be an ideal. The {\it leading monomial\/} of $f$ 
is denoted by ${\rm in}_\prec(f)$ and the {\it initial ideal\/} of $I$
is denoted by ${\rm in}_\prec(I)$. A monomial $t^a$ is called a 
{\it standard monomial\/} of $S/I$, with respect 
to $\prec$, if $t^a$ is not in the ideal 
${\rm in}_\prec(I)$. A polynomial $f$ is called {\it standard\/} if
$f\neq 0$ and $f$ is a
$K$-linear combination of standard monomials. The set of standard
monomials, denoted $\Delta_\prec(I)$, is called the {\it
footprint\/} of $S/I$. 
If $I$ is graded, then $H_I(d)$ is the number of standard
monomials of degree $d$. 

\begin{lemma}\label{regular-elt-in} 
Let $\prec$ be a monomial order, let $I\subset S$ be an ideal, and let
$f$ be a polynomial of $S$ of positive degree. If ${\rm in}_\prec(f)$
is regular on $S/{\rm in}_\prec(I)$, then $f$ is regular on $S/I$. 
\end{lemma}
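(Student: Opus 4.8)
The plan is to prove the contrapositive: assuming that $f$ is a zero-divisor on $S/I$, I will exhibit a nonzero element of $S/{\rm in}_\prec(I)$ that is annihilated by ${\rm in}_\prec(f)$, which shows that ${\rm in}_\prec(f)$ is a zero-divisor on $S/{\rm in}_\prec(I)$. Since $f$ has positive degree, it is nonzero, so this suffices.

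First, because $f$ is a zero-divisor on $S/I$, there is a polynomial $g$ with $g\notin I$ and $fg\in I$. I would like the witness $g$ to have a standard leading monomial. To arrange this, I replace $g$ by its normal form (remainder) with respect to a Gr\"obner basis of $I$, keeping the name $g$. Then $g$ is standard, it is still congruent to the original witness modulo $I$, so $g\notin I$ and in particular $g\neq 0$; moreover $fg\in I$ continues to hold since $f$ times an element of $I$ lies in $I$. Thus ${\rm in}_\prec(g)$ is a monomial in $\Delta_\prec(I)$, that is, ${\rm in}_\prec(g)\notin {\rm in}_\prec(I)$.

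The key computation is then immediate. Since $S$ is an integral domain and $\prec$ is multiplicative, $fg\neq 0$ and ${\rm in}_\prec(fg)={\rm in}_\prec(f)\,{\rm in}_\prec(g)$. Because $fg$ is a nonzero element of $I$, its leading monomial belongs to ${\rm in}_\prec(I)$ by the definition of the initial ideal, so ${\rm in}_\prec(f)\,{\rm in}_\prec(g)\in {\rm in}_\prec(I)$. As ${\rm in}_\prec(g)\notin {\rm in}_\prec(I)$, the monomial ${\rm in}_\prec(g)$ represents a nonzero element of $S/{\rm in}_\prec(I)$ killed by ${\rm in}_\prec(f)$, and hence ${\rm in}_\prec(f)$ is a zero-divisor on $S/{\rm in}_\prec(I)$, contrary to hypothesis. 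The only step that requires any care is the normalization of the witness $g$ to have a standard leading monomial; once this is done, multiplicativity of $\prec$ together with the definition of the initial ideal closes the argument, so I do not expect a genuine obstacle here.
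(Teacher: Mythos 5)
Your proof is correct and follows essentially the same route as the paper's: both arguments reduce the zero-divisor witness to its normal form via the division algorithm so that its leading monomial is standard, then use multiplicativity of $\mathrm{in}_\prec$ and the definition of the initial ideal to contradict (or, in your contrapositive phrasing, establish) zero-division by $\mathrm{in}_\prec(f)$ on $S/\mathrm{in}_\prec(I)$. The difference is purely one of logical packaging (contrapositive versus direct proof with an internal contradiction), not of mathematical substance.
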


\begin{proof} Let $g$ be a polynomial of $S$ such that $gf\in I$. It
suffices to show that $g\in I$. Pick a Gr\"obner basis
$g_1,\ldots,g_r$ of $I$. Then, by the division algorithm
\cite[Theorem~3, p.~63]{CLO}, we can write 
$g=\sum_{i=1}^ra_ig_i+h$, 
where $h=0$ or $h$ is a standard polynomial of $S/I$. We need only
show that $h=0$. If $h\neq 0$, then $hf$ is in $I$ and 
${\rm in}_\prec(h){\rm in}_\prec(f)$ is in ${\rm
in}_\prec(I)$. Therefore ${\rm in}_\prec(h)$ is in ${\rm
in}_\prec(I)$, a contradiction.
\end{proof}

\begin{remark} Given an integer $d\geq 1$, there is a map 
${\rm in}_\prec\colon\mathcal{F}_{\prec,d}\rightarrow
\mathcal{M}_{\prec, d}$ given by $f\mapsto {\rm
in}_\prec(f)$. This follows from Lemma~\ref{regular-elt-in}. If $I$ is
a monomial ideal, then $\mathcal{M}_{\prec, d}\subset
\mathcal{F}_{\prec, d}$.
\end{remark}

\paragraph{\bf Projective Reed-Muller-type codes} 
Let $K=\mathbb{F}_q$ be a finite field with $q$ elements,
let $\mathbb{P}^{s-1}$ be a projective space over 
$K$, and let $\mathbb{X}$ be a subset of
$\mathbb{P}^{s-1}$. As usual, points of $\mathbb{P}^{s-1}$ are denoted by
$[\alpha]$, where $0\neq\alpha\in K^s$.  In this paragraph all results
are valid if we assume that $K$ is any field and $\mathbb{X}$ is a finite subset of
$\mathbb{P}^{s-1}$, instead of assuming that $K$ is finite. However, the
interesting case for coding theory is when $K$ is finite.

The {\it vanishing ideal\/} of $\mathbb{X}$, denoted $I(\mathbb{X})$, is the ideal of $S$ 
generated by the homogeneous polynomials that vanish at all points of
$\mathbb{X}$. In this case the Hilbert function of $S/I(\mathbb{X})$ is denoted by
$H_\mathbb{X}(d)$. 
Let $P_1,\ldots,P_m$ be a set of
representatives for the points of $\mathbb{X}$ with $m=|\mathbb{X}|$.
Fix a degree $d\geq 1$. 
For each $i$ there is $f_i\in S_d$ such that
$f_i(P_i)\neq 0$. Indeed suppose $P_i=[(a_1,\ldots,a_s)]$, there is at
least one $k$ in $\{1,\ldots,s\}$ such that $a_k\neq 0$. Setting
$f_i(t_1,\ldots,t_s)=t_k^d$ one has that $f_i\in S_d$ and
$f_i(P_i)\neq 0$. There is a $K$-linear map: 
\begin{equation}\label{ev-map}
{\rm ev}_d\colon S_d=K[t_1,\ldots,t_s]_d\rightarrow K^{|\mathbb{X}|},\ \ \ \ \ 
f\mapsto
\left(\frac{f(P_1)}{f_1(P_1)},\ldots,\frac{f(P_m)}{f_m(P_m)}\right).
\end{equation}

The map ${\rm ev}_d$ is called an {\it evaluation map}. The image of 
$S_d$ under ${\rm ev}_d$, denoted by  $C_\mathbb{X}(d)$, is called a {\it
projective Reed-Muller-type code\/} of degree $d$ over $\mathbb{X}$ 
\cite{duursma-renteria-tapia}. 
It is also called an {\it evaluation code\/} associated to $\mathbb{X}$
\cite{gold-little-schenck}. This type of codes have been studied using
commutative algebra methods and especially Hilbert functions, 
see \cite{delsarte-goethals-macwilliams,GRT,algcodes,sorensen} and the
references therein. 

\begin{definition}\rm A {\it linear
code\/} is a linear subspace of $K^m$ for some 
$m$. The {\it basic parameters} of the linear
code $C_\mathbb{X}(d)$ are its {\it length\/} $|\mathbb{X}|$, {\it
dimension\/} 
$\dim_K C_\mathbb{X}(d)$, and {\it minimum distance\/} 
$$
\delta_\mathbb{X}(d):=\min\{\|v\| 
\colon 0\neq v\in C_\mathbb{X}(d)\},
$$
where $\|v\|$ is the number of non-zero
entries of $v$. 
\end{definition}

\begin{lemma}{\cite[Lemma~2.13]{hilbert-min-dis}}\label{may21-15-1}
{\rm (a)} The map ${\rm ev}_d$ is well-defined, i.e., it is independent of
the set of representatives that we choose for the points of
$\mathbb{X}$. {\rm (b)} The basic parameters of the Reed-Muller-type code
$C_\mathbb{X}(d)$ are
independent of $f_1,\ldots,f_m$.
\end{lemma}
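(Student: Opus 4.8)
The plan is to handle the two parts separately: part (a) follows from homogeneity, and part (b) reduces to a diagonal change of coordinates on $K^m$.

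For part (a), I would fix $f\in S_d$ and compare the $i$-th coordinate of ${\rm ev}_d(f)$ for two choices of representative of the same point of $\mathbb{X}$. If $P_i'=\lambda P_i$ with $\lambda\in K^*$, then since both $f$ and $f_i$ are homogeneous of degree $d$ one has $f(\lambda P_i)=\lambda^d f(P_i)$ and $f_i(\lambda P_i)=\lambda^d f_i(P_i)$, so the factors of $\lambda^d$ cancel in the quotient and the coordinate $f(P_i)/f_i(P_i)$ is unchanged. As this holds coordinate-wise and for every $f\in S_d$, the map ${\rm ev}_d$ is independent of the chosen representatives.

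For part (b), the length is $m=|\mathbb{X}|$, which plainly does not depend on the $f_i$. For the remaining two parameters, I would let $g_1,\ldots,g_m$ be a second choice with $g_i(P_i)\neq 0$, giving an evaluation map ${\rm ev}_d'$ and a code $C_\mathbb{X}'(d)$. Setting $c_i:=f_i(P_i)/g_i(P_i)\in K^*$, one checks that $f(P_i)/g_i(P_i)=c_i\,(f(P_i)/f_i(P_i))$ for each $i$, so ${\rm ev}_d'=\phi\circ{\rm ev}_d$, where $\phi\colon K^m\to K^m$ is the diagonal isomorphism $\phi(v_1,\ldots,v_m)=(c_1v_1,\ldots,c_mv_m)$. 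Hence $C_\mathbb{X}'(d)=\phi(C_\mathbb{X}(d))$.

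Since $\phi$ is a linear isomorphism it preserves dimension, giving $\dim_K C_\mathbb{X}'(d)=\dim_K C_\mathbb{X}(d)$; and since each $c_i$ is nonzero, $\phi$ sends a vector to one with the same support, hence preserves the Hamming weight $\|v\|$ and carries nonzero vectors to nonzero vectors, so the two codes share the same minimum distance $\delta_\mathbb{X}(d)$. This exhausts the three basic parameters. There is no real obstacle here; the only point requiring care is that $f$ and the auxiliary polynomials all have the common degree $d$, which is precisely what forces the scaling factors to cancel in part (a) and keeps $\phi$ diagonal (rather than merely linear) in part (b).
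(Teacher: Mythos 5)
Your proof is correct and takes essentially the same approach as the source: this paper does not prove the lemma itself but cites it from \cite{hilbert-min-dis}, where the argument is exactly yours — homogeneity of degree $d$ cancels the scalar $\lambda^d$ in both numerator and denominator for part (a), and for part (b) two choices of the auxiliary polynomials yield evaluation maps differing by a diagonal, hence weight- and dimension-preserving, linear isomorphism of $K^m$, so the length, dimension, and minimum distance are unchanged.
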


The following summarizes the well-known relation between 
projective Reed-Muller-type codes and the theory of Hilbert 
functions. Notice that items (i) and (iv) follow directly from
Eq.~(\ref{ev-map}) and 
item (iii), respectively.

\begin{proposition}\label{jan4-15}
The following hold. 
\begin{itemize}
\item[{\rm (i)}] $H_\mathbb{X}(d)=\dim_KC_\mathbb{X}(d)$ for $d\geq 1$.
\item[{\rm(ii)}] {\rm \cite[Lecture 13]{harris}} $\deg(S/I(\mathbb{X}))=|\mathbb{X}|$. 
\item[{\rm (iii)}] {\rm(}Singleton bound{\rm)} $1\leq
\delta_\mathbb{X}(d)\leq |\mathbb{X}|-H_\mathbb{X}(d)+1$ for $d\geq 1$. 
\item[{\rm (iv)}] $\delta_\mathbb{X}(d)=1$ for $d\geq {\rm reg}(S/I(\mathbb{X}))$.  
\end{itemize}
\end{proposition}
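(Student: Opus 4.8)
The plan is to prove the four items in the order (i), (iii), (iv), taking (ii) as given since it is quoted from \cite{harris}. As the remark preceding the statement indicates, (i) is immediate from the definition of the evaluation map and (iv) is a formal consequence of (iii), so the only genuine content is the kernel computation behind (i) and the Singleton-type estimate in (iii).

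First I would prove (i) by identifying the kernel of the map ${\rm ev}_d$ of Eq.~(\ref{ev-map}). A form $f\in S_d$ lies in $\ker({\rm ev}_d)$ precisely when $f(P_i)=0$ for every representative $P_i$, that is, when $f\in I(\mathbb{X})\cap S_d=I(\mathbb{X})_d$; this membership depends neither on the chosen representatives nor on the normalizing scalars $f_i(P_i)$, which is exactly what Lemma~\ref{may21-15-1} guarantees. The rank--nullity theorem then gives $\dim_K C_\mathbb{X}(d)=\dim_K S_d-\dim_K I(\mathbb{X})_d=\dim_K(S_d/I(\mathbb{X})_d)=H_\mathbb{X}(d)$.

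Next I would treat (iii). For the lower bound, since each $f_i$ satisfies $f_i(P_i)\neq 0$, the codeword ${\rm ev}_d(f_i)$ has a nonzero $i$-th entry, so $C_\mathbb{X}(d)\neq(0)$; hence the minimum defining $\delta_\mathbb{X}(d)$ runs over a nonempty set of nonzero vectors, each of weight at least $1$, and $\delta_\mathbb{X}(d)\geq 1$. For the upper bound I would invoke the Singleton bound for the linear code $C_\mathbb{X}(d)$, whose length is $|\mathbb{X}|$ and whose dimension is $H_\mathbb{X}(d)$ by (i): writing $\delta=\delta_\mathbb{X}(d)$ and projecting $C_\mathbb{X}(d)$ onto any $|\mathbb{X}|-\delta+1$ of its coordinates, one checks the projection is injective, because a codeword killed by it is supported on the remaining $\delta-1$ coordinates and thus has weight at most $\delta-1$, forcing it to vanish. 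Injectivity yields $H_\mathbb{X}(d)\leq|\mathbb{X}|-\delta+1$, i.e. $\delta_\mathbb{X}(d)\leq|\mathbb{X}|-H_\mathbb{X}(d)+1$.

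Finally, (iv) follows by combining (iii) with (ii). For $d\geq{\rm reg}(S/I(\mathbb{X}))$ the Hilbert function agrees with the Hilbert polynomial of $S/I(\mathbb{X})$; since $\mathbb{X}$ is a nonempty finite set, $S/I(\mathbb{X})$ is one-dimensional and this polynomial is the constant $\deg(S/I(\mathbb{X}))=|\mathbb{X}|$, so $H_\mathbb{X}(d)=|\mathbb{X}|$ and (iii) squeezes $\delta_\mathbb{X}(d)$ between $1$ and $|\mathbb{X}|-|\mathbb{X}|+1=1$. I do not expect a serious obstacle: the algebraic inputs are routine once Lemma~\ref{may21-15-1} and item (ii) are in hand, and the only point requiring a little care is phrasing the Singleton bound cleanly in this graded projective language.
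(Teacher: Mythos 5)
Your proposal is correct and follows exactly the route the paper intends: the paper states this proposition as well known, noting only that (i) follows from the evaluation map of Eq.~(\ref{ev-map}), (ii) is cited from \cite{harris}, (iii) is the classical Singleton bound, and (iv) follows from (iii) once one knows $H_\mathbb{X}(d)=|\mathbb{X}|$ for $d\geq{\rm reg}(S/I(\mathbb{X}))$. Your kernel computation, projection argument for Singleton, and use of the constancy of the Hilbert polynomial are precisely the standard fillings-in of those citations.
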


The next result gives an algebraic
formulation of the minimum distance of a projective Reed-Muller-type code 
in terms of the degree and the structure of the underlying
vanishing ideal.

\begin{theorem}{\cite[Theorem~4.7]{hilbert-min-dis}}\label{min-dis-vi} If $|\mathbb{X}|\geq 2$, then
$\delta_\mathbb{X}(d)=\delta_{I(\mathbb{X})}(d)\geq 1$ 
for $d\geq 1$.
\end{theorem}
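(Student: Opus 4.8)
The plan is to translate the Hamming-weight definition of $\delta_{\mathbb{X}}(d)$ into a point count, convert that point count into a degree, and then match the resulting maximum with the one defining $\delta_{I(\mathbb{X})}(d)$. First I would note that the $i$-th coordinate of $\mathrm{ev}_d(f)$ is zero exactly when $P_i\in V_{\mathbb{X}}(f)$, so the weight of $\mathrm{ev}_d(f)$ equals $|\mathbb{X}|-|V_{\mathbb{X}}(f)|$, which by Lemma~\ref{may21-15-1} does not depend on the chosen $f_i$. Dividing any $g\in S_d$ by a Gr\"obner basis of $I=I(\mathbb{X})$ produces a normal form $f$ with $g-f\in I$, hence $\mathrm{ev}_d(g)=\mathrm{ev}_d(f)$; moreover $g\notin I$ if and only if $f$ is a nonzero standard polynomial of degree $d$. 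Thus the nonzero codewords are precisely the $\mathrm{ev}_d(f)$ with $f$ a nonzero standard polynomial, and, using $\deg(S/I)=|\mathbb{X}|$ from Proposition~\ref{jan4-15}(ii),
$$
\delta_{\mathbb{X}}(d)=|\mathbb{X}|-\max\{|V_{\mathbb{X}}(f)|\colon f\text{ a nonzero standard polynomial of degree }d\}.
$$

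The crux is the degree formula: if $f$ is a nonzero standard polynomial that is a zero-divisor of $A=S/I$, then $\deg(S/(I,f))=|V_{\mathbb{X}}(f)|$. Let $\bar f$ be the image of $f$ in $A$ and split $\mathbb{X}=\mathbb{Y}\sqcup\mathbb{Z}$ with $\mathbb{Y}=V_{\mathbb{X}}(f)$ and $\mathbb{Z}=\mathbb{X}\setminus\mathbb{Y}$. From $(S/(I,f))_{d'}=A_{d'}/\bar fA_{d'-\deg f}$ one gets
$$
H_{(I,f)}(d')=H_{\mathbb{X}}(d')-H_{\mathbb{X}}(d'-\deg f)+\dim_K\bigl(\mathrm{ann}_A(\bar f)\bigr)_{d'-\deg f}.
$$
Because $I(\mathbb{X})$ is radical its minimal primes are the ideals of the points, so $\bar a\bar f=0$ in $A$ iff $af\in I(\mathbb{X})$ iff $a$ vanishes on $\mathbb{Z}$ (as $f$ is already zero on $\mathbb{Y}$ and nonzero on $\mathbb{Z}$); hence $\mathrm{ann}_A(\bar f)=I(\mathbb{Z})/I$ and $\dim_K(\mathrm{ann}_A(\bar f))_{d'-\deg f}=H_{\mathbb{X}}(d'-\deg f)-H_{\mathbb{Z}}(d'-\deg f)$. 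The displayed formula then collapses to $H_{(I,f)}(d')=H_{\mathbb{X}}(d')-H_{\mathbb{Z}}(d'-\deg f)$, which for $d'\gg 0$ equals $|\mathbb{X}|-|\mathbb{Z}|=|\mathbb{Y}|$. Since $\mathbb{Y}\neq\emptyset$ forces $\dim S/(I,f)=1$, its Hilbert polynomial is the constant $|\mathbb{Y}|$ and therefore $\deg(S/(I,f))=|V_{\mathbb{X}}(f)|$.

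To finish I would compare the two maxima. A nonzero standard polynomial is a zero-divisor of $A$ precisely when it lies in some point ideal, i.e. when $V_{\mathbb{X}}(f)\neq\emptyset$; a regular element has $V_{\mathbb{X}}(f)=\emptyset$ and contributes $0$. Hence if $\mathcal{F}_{\prec,d}\neq\emptyset$ the maximum in the display defining $\delta_{\mathbb{X}}(d)$ above is attained on $\mathcal{F}_{\prec,d}$ and, by the degree formula, equals $\max\{\deg(S/(I,f))\colon f\in\mathcal{F}_{\prec,d}\}$, giving $\delta_{\mathbb{X}}(d)=\delta_{I(\mathbb{X})}(d)$; if $\mathcal{F}_{\prec,d}=\emptyset$ then no nonzero standard polynomial has a zero in $\mathbb{X}$, so both quantities equal $|\mathbb{X}|=\deg(S/I)$ and again agree. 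Finally $f\notin I$ forces $V_{\mathbb{X}}(f)\neq\mathbb{X}$, so $|V_{\mathbb{X}}(f)|\leq|\mathbb{X}|-1$ and $\delta_{\mathbb{X}}(d)\geq 1$, in agreement with the Singleton bound of Proposition~\ref{jan4-15}(iii).

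The main obstacle is the degree formula of the second paragraph: one must verify that enlarging $I$ by $f$ creates no extra multiplicity at the points of $V_{\mathbb{X}}(f)$, even where $f$ vanishes to high order. This is exactly where radicality of $I(\mathbb{X})$ is indispensable---the reduced structure of $A$ pins down $\mathrm{ann}_A(\bar f)=I(\mathbb{Z})/I$ and keeps the stabilized Hilbert function equal to the \emph{number} of points of $V_{\mathbb{X}}(f)$ rather than a weighted count; for a non-radical ideal the same computation would overshoot.
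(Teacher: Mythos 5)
Your proof is correct, and it is essentially the intended argument: the paper does not prove this theorem itself (it quotes it from \cite[Theorem~4.7]{hilbert-min-dis}), but the route you take---reduce nonzero codewords to nonzero standard polynomials via normal forms, rewrite the minimum weight as $\deg(S/I(\mathbb{X}))-\max|V_{\mathbb{X}}(f)|$ as in Eq.~(\ref{min-dis-vi-coro}), apply the degree formula $|V_{\mathbb{X}}(f)|=\deg(S/(I(\mathbb{X}),f))$ for homogeneous zero-divisors, and split off the regular (empty zero set) case---is exactly the one underlying the cited source and the surrounding machinery of Section~\ref{prelim-section}. The only real difference is that you re-prove the degree formula (the paper's quoted Lemma~\ref{degree-formula-for-the-number-of-zeros-proj}) from scratch via the annihilator computation $\mathrm{ann}_A(\bar f)=I(\mathbb{Z})/I(\mathbb{X})$ and Hilbert-function stabilization, which makes your write-up self-contained where the paper simply cites.
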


This result gives an algorithm, that can be implemented in
\emph{CoCoA} \cite{CNR}, {\em Macaulay\/}$2$
\cite{mac2}, or \emph{Singular} \cite{singular}, 
to compute $\delta_\mathbb{X}(d)$ for small values of $q$
and $s$, where $q$ is the cardinality of $\mathbb{F}_q$ and $s$ is the
number of variables of $S$ (see the procedure of
Example~\ref{covering-by-hyperplanes-example}). 
Using SAGE \cite{sage} one can also compute $\delta_\mathbb{X}(d)$ by 
finding a generator matrix of $C_\mathbb{X}(d)$.

As a direct consequence of Theorem~\ref{min-dis-vi} one has:
\begin{equation}\label{min-dis-vi-coro}
\delta_{I(\mathbb{X})}(d)=\deg(S/I(\mathbb{X}))-\max\{|V_{\mathbb{X}}(f)|\colon
f\not\equiv 0;
f\in S_d\},
\end{equation}
where $V_\mathbb{X}(f)$ is the zero set of $f$ in
$\mathbb{X}$ 
and $f\not\equiv 0$ means that $f$ does not vanish at all points of 
$\mathbb{X}$. 

The next lemma follows using 
the division algorithm \cite{CLO} (cf. \cite[Problem~1-17]{fulton}).   

\begin{lemma}\label{primdec-ix-a} Let $\mathbb{X}$ be a finite
subset of $\mathbb{P}^{s-1}$, let $[\alpha]$ be a point in
$\mathbb{X}$, 
with $\alpha=(\alpha_1,\ldots,\alpha_s)$
and $\alpha_k\neq 0$ for some $k$, and let
$I_{[\alpha]}$ be the vanishing ideal of $[\alpha]$. Then $I_{[\alpha]}$ is a prime ideal, 
\begin{equation*}
I_{[\alpha]}=(\{\alpha_kt_i-\alpha_it_k\vert\, k\neq i\in\{1,\ldots,s\}),\
\deg(S/I_{[\alpha]})=1,\,  
\end{equation*}
${\rm ht}(I_{[\alpha]})=s-1$, 
and $I(\mathbb{X})=\bigcap_{[\beta]\in{\mathbb{X}}}I_{[\beta]}$ is the primary
decomposition of $I(\mathbb{X})$. 
\end{lemma}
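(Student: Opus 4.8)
The plan is to reduce everything to the explicit ideal $J:=(\{\alpha_k t_i-\alpha_i t_k\mid k\neq i\in\{1,\ldots,s\}\})$ and to identify $S/J$ with a polynomial ring in one variable. First I would check the easy inclusion $J\subseteq I_{[\alpha]}$: evaluating each generator $\alpha_k t_i-\alpha_i t_k$ at $\alpha$ gives $\alpha_k\alpha_i-\alpha_i\alpha_k=0$, so every generator of $J$ lies in $I_{[\alpha]}$. Next, using that $\alpha_k\neq 0$, I would observe that modulo $J$ one has $t_i\equiv(\alpha_i/\alpha_k)\,t_k$ for every $i\neq k$; hence, by the division algorithm \cite[Theorem~3, p.~63]{CLO} with respect to any monomial order in which $t_k$ is the smallest variable, every monomial of degree $d$ reduces to a scalar multiple of $t_k^d$, and the powers $\{t_k^d\}_{d\geq 0}$ form a $K$-basis of $S/J$. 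This exhibits an isomorphism $S/J\cong K[t_k]$ of graded $K$-algebras.

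Once this identification is in place, the first four assertions follow quickly. Since $K[t_k]$ is a domain, $J$ is prime; since it has Krull dimension one, ${\rm ht}(J)=s-1$; and since its Hilbert function is identically $1$, the degree formula of Section~\ref{prelim-section} gives $\deg(S/J)=1$. It then remains to prove the reverse inclusion $I_{[\alpha]}\subseteq J$, which is the only substantive point. Given a homogeneous $f\in I_{[\alpha]}$ of degree $d$, I would reduce it modulo $J$ to obtain $f\equiv c\,t_k^d\pmod{J}$ for some $c\in K$; evaluating at $\alpha$ and using that the generators of $J$ vanish at $\alpha$ yields $0=f(\alpha)=c\,\alpha_k^d$, whence $c=0$ because $\alpha_k\neq 0$. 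Thus $f\in J$, and since $I_{[\alpha]}$ is generated by its homogeneous elements we conclude $I_{[\alpha]}=J$, settling the displayed description of $I_{[\alpha]}$ together with its primeness, degree and height.

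For the last assertion I would argue degree by degree. Both $I(\mathbb{X})$ and $\bigcap_{[\beta]\in\mathbb{X}}I_{[\beta]}$ are graded ideals, and in each degree $d$ their degree-$d$ pieces consist exactly of the homogeneous polynomials of degree $d$ that vanish at every point of $\mathbb{X}$; hence the two ideals coincide. Each $I_{[\beta]}$ is prime by the first part, so this expresses $I(\mathbb{X})$ as an intersection of primes, giving a primary decomposition. To see it is irredundant (and that the $I_{[\beta]}$ are precisely the associated primes), I would note that the points are distinct, so for each $[\beta]$ there is a homogeneous polynomial vanishing on $\mathbb{X}\setminus\{[\beta]\}$ but not at $[\beta]$, which shows $\bigcap_{[\gamma]\neq[\beta]}I_{[\gamma]}\not\subseteq I_{[\beta]}$. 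The main obstacle is really contained in the reverse inclusion of the previous step: one must be sure that the division-algorithm remainder is controlled tightly enough (a single monomial $t_k^d$ in each degree) so that a single evaluation at $\alpha$ forces $f\in J$; the homogeneity bookkeeping in the last paragraph is then routine.
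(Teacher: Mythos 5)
Your proposal is correct and takes essentially the same route the paper intends: the paper dispatches this lemma with the single remark that it ``follows using the division algorithm \cite{CLO}'', and your argument is exactly that division-algorithm reduction of $S/J$ to $K[t_k]$ followed by evaluation at $\alpha$. The one detail you leave implicit---that the powers $t_k^d$ are linearly independent modulo $J$, so that $S/J\cong K[t_k]$ rather than a proper quotient of it---is immediate from your own first step, since $J$ is graded and every element of $J$ vanishes at $\alpha$, while $t_k^d(\alpha)=\alpha_k^d\neq 0$.
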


\begin{remark}\label{jun8-16} If $\mathbb{X}$ is a finite set of projective points,
then $S/I(\mathbb{X})$ is a Cohen--Macaulay reduced graded ring of dimension
$1$. This is very well-known, and it follows directly from
Lemma~\ref{primdec-ix-a}. In particular, the regularity of 
the Hilbert function of $S/I(\mathbb{X})$ is the Castelnuovo--Mumford regularity of
$S/I(\mathbb{X})$. 
\end{remark} 

An ideal $I\subset S$ is called {\it unmixed\/} 
if all its associated primes have the same height. The next result classifies monomial 
vanishing ideals of
finite sets in a projective space.

\begin{proposition}\label{classification-vanishing-monomial} 
Let $\mathbb{X}$ be a finite subset of $\mathbb{P}^{s-1}$.  The
following are equivalent\/{\rm:}
\begin{itemize}
\item[\rm(a)] $I(\mathbb{X})$ is a monomial ideal.
\item[\rm(b)] $I(\mathbb{X})=\cap_{i=1}^m\mathfrak{p}_i$, where 
the $\mathfrak{p}_i$'s are generated by $s-1$ variables. 
\item[\rm(c)] $\mathbb{X}\subset\{[e_1],\ldots,[e_s]\}$, where $e_i$
is the $i$-th unit vector.
\end{itemize}
\end{proposition}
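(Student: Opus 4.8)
The plan is to prove the cyclic chain of implications $(\mathrm{c})\Rightarrow(\mathrm{b})\Rightarrow(\mathrm{a})\Rightarrow(\mathrm{c})$, using throughout the primary decomposition $I(\mathbb{X})=\bigcap_{[\beta]\in\mathbb{X}}I_{[\beta]}$ furnished by Lemma~\ref{primdec-ix-a}, together with the facts recorded there that each $I_{[\beta]}$ is a prime of height $s-1$ and that $I(\mathbb{X})$ is radical (being an intersection of primes). The key preliminary computation is the explicit form of $I_{[e_j]}$: applying the generator formula of Lemma~\ref{primdec-ix-a} to $\alpha=e_j$ (so $\alpha_j=1$ and $\alpha_i=0$ for $i\neq j$) yields $I_{[e_j]}=(t_i\mid i\neq j)$, the monomial prime generated by the $s-1$ variables other than $t_j$.

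For $(\mathrm{c})\Rightarrow(\mathrm{b})$ I would argue that if $\mathbb{X}=\{[e_{j_1}],\ldots,[e_{j_m}]\}$, then the primary decomposition reads $I(\mathbb{X})=\bigcap_{i=1}^m I_{[e_{j_i}]}$, and each $I_{[e_{j_i}]}=(t_k\mid k\neq j_i)$ is generated by $s-1$ variables, which is exactly statement $(\mathrm{b})$. For $(\mathrm{b})\Rightarrow(\mathrm{a})$ I would use that a finite intersection of monomial ideals is again a monomial ideal, so $I(\mathbb{X})$ is monomial; this direction is immediate.

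The real content is $(\mathrm{a})\Rightarrow(\mathrm{c})$. Here I would combine radicality with the monomial hypothesis: a radical monomial ideal is squarefree, and the minimal primes of a squarefree monomial ideal are monomial primes, each generated by a subset of the variables. On the other hand, by Lemma~\ref{primdec-ix-a} the minimal primes of $I(\mathbb{X})$ are precisely the ideals $I_{[\beta]}$ with $[\beta]\in\mathbb{X}$, and the set of minimal primes is intrinsic to $I(\mathbb{X})$. Matching the two descriptions forces each $I_{[\beta]}$ to be a monomial prime; since it has height $s-1$ it must be generated by exactly $s-1$ variables, say $I_{[\beta]}=(t_k\mid k\neq j)=I_{[e_j]}$. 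Because a point is recovered as the common zero locus of its vanishing ideal (equivalently, $[\beta]\mapsto I_{[\beta]}$ is injective), this yields $[\beta]=[e_j]$, and hence $\mathbb{X}\subseteq\{[e_1],\ldots,[e_s]\}$.

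I expect the main obstacle to be precisely this last matching step: one must argue carefully that the hypothesis "$I(\mathbb{X})$ is a monomial ideal" constrains all of its associated primes to be monomial primes generated by variables. The cleanest route is through radicality, which lets me invoke the standard description of the minimal primes of a squarefree monomial ideal and then identify them, via uniqueness of minimal primes, with the geometrically prescribed $I_{[\beta]}$. Once this identification is in place, everything else is routine bookkeeping with the explicit generators supplied by Lemma~\ref{primdec-ix-a}.
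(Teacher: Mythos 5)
Your proposal is correct, but it runs the cycle in the opposite direction from the paper and handles the key step by a different mechanism. The paper proves $(\mathrm{a})\Rightarrow(\mathrm{b})\Rightarrow(\mathrm{c})\Rightarrow(\mathrm{a})$: for $(\mathrm{a})\Rightarrow(\mathrm{b})$ it invokes Remark~\ref{jun8-16} (that $S/I(\mathbb{X})$ is radical and Cohen--Macaulay of dimension $1$) to conclude that $I(\mathbb{X})$ is an \emph{unmixed} squarefree monomial ideal of height $s-1$, hence an intersection of face ideals of height $s-1$; then $(\mathrm{b})\Rightarrow(\mathrm{c})$ is argued geometrically via the Zariski closure, $\mathbb{X}=\overline{\mathbb{X}}=V(I(\mathbb{X}))=\cup_i V(\mathfrak{p}_i)$ with each $V(\mathfrak{p}_i)$ a coordinate point; and $(\mathrm{c})\Rightarrow(\mathrm{a})$ follows from Lemma~\ref{primdec-ix-a}. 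You instead prove $(\mathrm{c})\Rightarrow(\mathrm{b})\Rightarrow(\mathrm{a})\Rightarrow(\mathrm{c})$, and your hard implication $(\mathrm{a})\Rightarrow(\mathrm{c})$ stays entirely at the level of ideals: radicality (immediate from the intersection of primes in Lemma~\ref{primdec-ix-a}) gives squarefreeness, Stanley--Reisner theory says the minimal primes are face ideals, and uniqueness of minimal primes matches these against the $I_{[\beta]}$, which have height $s-1$ by the lemma and are therefore of the form $(t_k\mid k\neq j)=I_{[e_j]}$; injectivity of $[\beta]\mapsto I_{[\beta]}$ then pins down the points. The trade-off: your route dispenses with the Cohen--Macaulay/unmixedness input of Remark~\ref{jun8-16} (unmixedness comes for free from the lemma's height statement) and avoids any variety or closure argument, at the cost of having to justify the minimal-prime matching carefully --- in particular that the decomposition $\cap_{[\beta]}I_{[\beta]}$ is irredundant (distinct primes of equal height admit no containments) and that $V(I_{[\beta]})=\{[\beta]\}$, both of which your sketch correctly identifies as the crux and both of which do hold. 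The paper's version is shorter given the remark it has already on hand; yours is more self-contained and purely algebraic.
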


\begin{proof} (a) $\Rightarrow$ (b): By Remark~\ref{jun8-16}, 
$I(\mathbb{X})$ is a radical
Cohen--Macaulay graded ideal of dimension $1$. Hence, 
$I(\mathbb{X})$ is an unmixed square-free monomial ideal of height
$s-1$. Therefore, $I(\mathbb{X})$ is equal to $\cap_{i=1}^m\mathfrak{p}_i$, where 
the $\mathfrak{p}_i$'s are face ideals (i.e., ideals generated by
variables) of height $s-1$.  

(b) $\Rightarrow$ (c): Let $\overline{\mathbb{X}}$ be the Zariski
closure of $\mathbb{X}$. As $\mathbb{X}$ is finite, one has 
$\mathbb{X}=\overline{\mathbb{X}}=V(I(\mathbb{X}))=\cup_{i=1}^mV(\mathfrak{p}_i)$. Thus it
suffices to notice that $V(\mathfrak{p}_i)=\{[e_{i_j}]\}$ for some
$i_j$. 

(c) $\Rightarrow$ (a): This follows from Lemma~\ref{primdec-ix-a}. 
\end{proof}

\section{Complete intersections}\label{ci-section}

Let $S=K[t_1,\ldots,t_s]=\oplus_{d=0}^{\infty} S_d$ be a polynomial ring over a field $K$ with
the standard grading and $s\geq 2$. An ideal $I\subset S$ is called a
{\it complete intersection\/} if  
there exist $g_1,\ldots,g_{r}$ in $S $ such that $I=(g_1,\ldots,g_{r})$, 
where $r$ is the height of $I$.

In what follows by a monomial order $\prec$
we mean a graded monomial order in the sense that $\prec$ is defined
first by total degree \cite{CLO}.

\begin{lemma}\label{ci-monomial-dim1} Let $L\subset S$ be an ideal
 generated by monomials. If $\dim(S/L)=1$,
then $L$ is a complete intersection if and only if, up to permutation of
the variables $t_1,\ldots,t_s$, we can write
\begin{itemize}
\item[(i)] $L=(t_2^{d_2},\ldots,t_s^{d_s})$ with $1\leq d_i\leq d_{i+1}$ for 
$i\geq 2$, or 
\item[(ii)] $L=(t_1^{d_2},\ldots,t_{p-1}^{d_p},t_p^{c_p}t_{p+1}^{c_{p+1}},t_{p+2}^{d_{p+2}},\ldots
,t_s^{d_s})$ for some $p\geq 1$ such that $1\leq c_p\leq c_{p+1}$ and
$1\leq d_i\leq d_{i+1}$ for $2\leq i\leq s-1$, where
$d_{p+1}=c_p+c_{p+1}$.
\end{itemize}
\end{lemma}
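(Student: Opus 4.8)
The plan is to reduce everything to the combinatorics of the supports of the minimal monomial generators of $L$, writing ${\rm supp}(t^a)=\{i : a_i>0\}$. The single input I would take from outside is the classical characterization of monomial complete intersections: a monomial ideal is a complete intersection if and only if its (unique) minimal monomial generating set consists of monomials with pairwise disjoint supports. I would cite this standard fact rather than reprove it; the real work is the case analysis it makes available. Recall also that for a complete intersection the number of minimal generators equals the height, so here $L$ has exactly ${\rm ht}(L)=s-1$ minimal monomial generators.

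For the implication $(\Leftarrow)$ I would simply verify that each displayed ideal has the stated shape. In case (i) the generators $t_2^{d_2},\ldots,t_s^{d_s}$ involve the distinct variables $t_2,\ldots,t_s$, so their supports are pairwise disjoint singletons; in case (ii) the generators involve the disjoint sets $\{1\},\ldots,\{p-1\}$, the pair $\{p,p+1\}$, and $\{p+2\},\ldots,\{s\}$. In both cases there are exactly $s-1$ generators with pairwise disjoint supports, so $L$ is a complete intersection, and selecting one variable from each support exhibits a minimal prime generated by $s-1$ distinct variables, whence ${\rm ht}(L)=s-1$ and $\dim(S/L)=1$.

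For $(\Rightarrow)$, assume $L$ is a monomial complete intersection with $\dim(S/L)=1$, i.e. ${\rm ht}(L)=s-1$. By the characterization, $L$ has $s-1$ minimal monomial generators $m_1,\ldots,m_{s-1}$ with pairwise disjoint, nonempty supports. The heart of the proof is then an elementary pigeonhole count: the supports are $s-1$ pairwise disjoint nonempty subsets of $\{1,\ldots,s\}$, so the sum of their cardinalities lies between $s-1$ and $s$. A support of size $\ge 3$, or two supports each of size $\ge 2$, would force this sum to exceed $s$; hence either all supports are singletons, or exactly one has size $2$ and the rest are singletons. In the first case exactly one variable is absent from all supports, and relabelling it as $t_1$ yields case (i); in the second case the $s-1$ supports exhaust all $s$ variables, and relabelling the size-two support as $\{t_p,t_{p+1}\}$ yields case (ii).

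Finally I would treat the monotonicity and indexing conditions as bookkeeping: one sorts the $s-1$ generators by degree, relabels the variables accordingly so that $1\le d_i\le d_{i+1}$ throughout, and records that in case (ii) the degree $c_p+c_{p+1}$ of the unique two-variable generator occupies the slot $d_{p+1}$. The only genuine obstacle is the disjoint-support characterization of monomial complete intersections; once that is granted (and it is classical), the remainder is the pigeonhole count above together with routine relabelling.
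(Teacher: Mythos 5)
Your proof is correct and takes essentially the same approach as the paper: the paper also deduces that the $s-1$ minimal monomial generators have pairwise disjoint supports (via the fact that they form a regular sequence) and then splits into the two cases according to whether some variable is absent from all supports or all $s$ variables occur. Your explicit pigeonhole count and the height verification in the converse direction merely spell out details the paper leaves implicit.
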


\begin{proof} $\Rightarrow$) Let
$t^{\alpha_1},\ldots,t^{\alpha_{s-1}}$ be the minimal set of
generators of $L$ consisting of monomials. These monomials form a
regular sequence. Hence $t^{\alpha_i}$ and
$t^{\alpha_j}$ have no common variables for $i\neq j$. Then, either all
variables occur in $t^{\alpha_1},\ldots,t^{\alpha_{s-1}}$ and we are
in case (ii), up to permutation of the variables $t_1,\ldots,t_s$, or there is one 
variable that is not in any of the
$t^{\alpha_i}$'s and we are in case (i), up to permutation of
the variables $t_1,\ldots,t_s$. 

$\Leftarrow$) In both cases $L$ is an ideal of height $s-1$ generated
by $s-1$ elements, that is, $L$ is a complete intersection.
\end{proof}

\begin{proposition}{\cite[Propositions~3.1.33 and 5.1.11]{monalg-rev}}\label{inter-tensoraa}
Let $A=R_1/I_1$, $B=R_2/I_2$ be two standard graded algebras over a field
$K$, where $R_1=K[\mathbf{x}]$, $R_2=K[\mathbf{y}]$ are polynomial 
rings in disjoint sets of variables and $I_i$ is an ideal 
of $R_i$. If $R=K[\mathbf{x},\mathbf{y}]$ and $I=I_1R+I_2R$, then 
$$
(R_1/I_1)\otimes_K(R_2/I_2)\simeq R/I\
\mbox{ and }\ F(A\otimes_K B,x)=F(A,x)F(B,x),
$$
where $F(A,x)$ and $F(B,x)$ are the Hilbert series of $A$ and $B$,
respectively.
\end{proposition}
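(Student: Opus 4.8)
The plan is to prove the two assertions in sequence, deriving the Hilbert series identity from the algebra isomorphism. First I would establish the basic fact that $R_1\otimes_K R_2\cong R$ as graded $K$-algebras, where on $R$ we use the standard grading in which each variable of $\mathbf{x}$ and $\mathbf{y}$ has degree $1$, and on $R_1\otimes_K R_2$ we use the grading $(R_1\otimes_K R_2)_d=\bigoplus_{i+j=d}(R_1)_i\otimes_K(R_2)_j$. The isomorphism sends a decomposable tensor $t^a\otimes t^b$, where $t^a$ is a monomial in $\mathbf{x}$ and $t^b$ a monomial in $\mathbf{y}$, to the monomial $t^at^b$ of $R$. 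Since these monomials form $K$-bases of the source and target and the multiplication rule of the tensor product algebra matches that of $R$, this is an isomorphism of graded $K$-algebras.

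Next I would pass to the quotients. Because $K$ is a field, every $K$-module is flat, so tensoring the exact sequences $0\to I_i\to R_i\to R_i/I_i\to 0$ keeps them exact, and the natural surjection $\pi_1\otimes\pi_2\colon R_1\otimes_K R_2\to (R_1/I_1)\otimes_K(R_2/I_2)$ has kernel $I_1\otimes_K R_2+R_1\otimes_K I_2$. Under the identification of the first step, $I_1\otimes_K R_2$ corresponds to the ideal $I_1R$ and $R_1\otimes_K I_2$ to $I_2R$, so the kernel corresponds exactly to $I=I_1R+I_2R$. Hence $\pi_1\otimes\pi_2$ induces the desired graded ring isomorphism $(R_1/I_1)\otimes_K(R_2/I_2)\cong R/I$, which is the first claim.

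For the Hilbert series I would simply read off dimensions degree by degree. With the grading above, $(A\otimes_K B)_d=\bigoplus_{i+j=d}A_i\otimes_K B_j$, so $\dim_K(A\otimes_K B)_d=\sum_{i+j=d}\dim_K(A_i)\,\dim_K(B_j)$, which is precisely the coefficient of $x^d$ in the Cauchy product $F(A,x)F(B,x)$. Summing over all $d\geq 0$ yields $F(A\otimes_K B,x)=F(A,x)F(B,x)$.

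The only delicate point---and the main obstacle---is the bookkeeping with the grading: one must check that the tensor-product grading on $R_1\otimes_K R_2$ matches the standard grading on $R$, and this is exactly where disjointness of the variable sets $\mathbf{x}$ and $\mathbf{y}$ is used, since it guarantees that a degree-$d$ monomial of $R$ factors uniquely into an $\mathbf{x}$-part and a $\mathbf{y}$-part of complementary degrees. One must also verify that $\pi_1\otimes\pi_2$ is homogeneous. Flatness over the field $K$ is what ensures the kernel is exactly $I_1\otimes_K R_2+R_1\otimes_K I_2$ with no further relations; everything else is routine.
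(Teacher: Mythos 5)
Your proof is correct. Note that the paper itself gives no proof of this proposition --- it is quoted from \cite[Propositions~3.1.33 and 5.1.11]{monalg-rev} --- and your argument (identifying $R_1\otimes_K R_2$ with $R=K[\mathbf{x},\mathbf{y}]$ via the monomial basis, computing the kernel of $\pi_1\otimes\pi_2$ as $I_1\otimes_K R_2+R_1\otimes_K I_2$ using flatness over the field $K$, and then reading off the Hilbert series as a Cauchy product from the decomposition $(A\otimes_K B)_d=\bigoplus_{i+j=d}A_i\otimes_K B_j$) is the standard proof, essentially the one found in that reference.
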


\begin{lemma}\label{dec20-15} Let $L$ be the ideal of $S$ generated by 
$t_2^{d_2},\ldots ,t_s^{d_s}$. If $t^a=t_1^{a_1}t_r^{a_r}\cdots
t_s^{a_s}$, $r\geq 2$, $a_r\geq 1$, and $a_i\leq d_i-1$ for $i\geq r$,
then 
$$
\deg(S/(L,t^a)) =\deg(S/(L,t_r^{a_r}\cdots
t_s^{a_s}))= d_2\cdots d_s- d_2\cdots d_{r-1}(d_r-a_r)\cdots(d_s-a_s),
$$
where $a_i=0$ if $2\leq i<r$.
\end{lemma}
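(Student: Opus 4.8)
The plan is to read off both degrees from the Hilbert function in large degree. Since $S/I$ has Krull dimension one in each case below, its Hilbert polynomial is a constant, and by the definition of degree (the case $k=1$) that constant equals $\deg(S/I)$; equivalently $\deg(S/I)$ is the eventual value of $H_I(d)$ for $d\gg0$, which is the number of standard monomials of degree $d$. So it suffices to count standard monomials in high degree.

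First I would record the structural facts. The radical of $L=(t_2^{d_2},\dots,t_s^{d_s})$ is $(t_2,\dots,t_s)$, so $t_1$ is the only ``free'' variable and $\dim S/L=1$. Both $t^a=t_1^{a_1}t_r^{a_r}\cdots t_s^{a_s}$ and $t^b:=t_r^{a_r}\cdots t_s^{a_s}$ lie in $(t_2,\dots,t_s)$ (here $a_r\ge 1$), hence $(L,t^a)$ and $(L,t^b)$ both have radical $(t_2,\dots,t_s)$ and the rings $S/(L,t^a)$, $S/(L,t^b)$ are one-dimensional. The hypotheses $a_i\le d_i-1$ for $i\ge r$ (and $a_i=0$ for $2\le i<r$) also guarantee that $t^a$ is a standard monomial of $S/L$.

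The heart is the count. A monomial $t_1^{c_1}\cdots t_s^{c_s}$ is standard for $S/(L,t^a)$ iff $0\le c_i\le d_i-1$ for $2\le i\le s$ and $t^a\nmid t^c$. Fixing the total degree $d$ and writing $c_1=d-(c_2+\cdots+c_s)$, for every $d>a_1+\sum_{i=2}^s(d_i-1)$ each admissible tuple $(c_2,\dots,c_s)$ forces $c_1\ge a_1$, so the condition $t^a\mid t^c$ collapses to $c_i\ge a_i$ for all $i\ge r$. By inclusion--exclusion the number of standard monomials of such degree $d$ is the number of tuples with $0\le c_i\le d_i-1$ minus those that in addition satisfy $a_i\le c_i\le d_i-1$ for $i\ge r$, namely
$$
d_2\cdots d_s-d_2\cdots d_{r-1}(d_r-a_r)\cdots(d_s-a_s),
$$
where each factor $d_i-a_i$ is positive because $a_i\le d_i-1$. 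For $S/(L,t^b)$ the variable $t_1$ never enters the divisibility condition, so for $d\gg0$ its standard monomials of degree $d$ are governed by the identical bounded constraints on $c_2,\dots,c_s$ (with $c_1$ free), giving the same count. Thus both degrees equal the displayed expression, proving $\deg(S/(L,t^a))=\deg(S/(L,t^b))$ and the stated formula simultaneously.

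If one prefers a more structural route, the displayed factor can be obtained from Proposition~\ref{inter-tensoraa}: $S/(L,t^b)\cong K[t_1]\otimes_K B$ with $B=K[t_2,\dots,t_s]/(t_2^{d_2},\dots,t_s^{d_s},t^b)$ Artinian, so its Hilbert series is $F(B,x)/(1-x)$ and $\deg=F(B,1)=\dim_K B$, computed by the same count; the passage from $t^a$ to $t^b$ then follows from the short exact sequence $0\to(L,t^b)/(L,t^a)\to S/(L,t^a)\to S/(L,t^b)\to 0$, whose kernel is $S/((L,t^a):t^b)$, an Artinian (hence Hilbert-polynomial-zero) module because $((L,t^a):t^b)$ contains $t_1^{a_1}$ together with $(L:t^b)=(t_2^{d_2},\dots,t_{r-1}^{d_{r-1}},t_r^{d_r-a_r},\dots,t_s^{d_s-a_s})$ and so contains a power of every variable. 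The only real obstacle in either approach is the same bookkeeping point: checking that the $t_1$-exponent constraint becomes vacuous for $d\gg0$ (equivalently, that the discrepancy between $t^a$ and $t^b$ is supported on a finite-length module), which is precisely what decouples the computation from $t_1$ and makes the two degrees agree.
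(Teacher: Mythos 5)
Your proof is correct, but it takes a genuinely different route from the paper's. You exploit the fact that $(L,t^a)$ is itself a monomial ideal, so its Hilbert function counts footprint monomials, and that for a one-dimensional ring the degree is the eventual constant value of the Hilbert function; the formula then drops out of a single inclusion--exclusion once you observe that the constraint $c_1\geq a_1$ becomes vacuous in large degree, and the equality $\deg(S/(L,t^a))=\deg(S/(L,t^b))$ comes along for free since $t_1$ disappears from the divisibility condition. The paper instead proves the first equality via the short exact sequence $0\to S/(L,t^b)[-a_1]\xrightarrow{\,t_1^{a_1}\,}S/(L,t^a)\to S/(L,t_1^{a_1})\to 0$ (the right-hand ring being zero-dimensional), and then computes $\deg(S/(L,t^b))$ by induction on $s$: it splits off $t_r^{a_r}$ with another exact sequence, identifies the left-hand term as a tensor product (Proposition~\ref{inter-tensoraa}), and assembles the answer from Hilbert series using Stanley's complete intersection formula and Hilbert--Serre. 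What your argument buys is economy and self-containedness: no induction, no exact sequences, no series manipulations, just counting lattice points in a box minus a sub-box. What the paper's machinery buys is reusability: the same exact-sequence-plus-tensor-product technique is what carries over to the mixed-generator case of Lemma~\ref{dec28-15}, where the ideal has a generator $t_p^{c_p}t_{p+1}^{c_{p+1}}$ involving two variables and a direct monomial count would require a more intricate inclusion--exclusion. Your closing structural remark (the colon-ideal exact sequence with $((L,t^a):t^b)$ containing a power of every variable) is also sound and is in fact closer in spirit to the paper's first reduction step.
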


\demo In what follows we will use the fact that Hilbert
functions and Hilbert series are additive on short exact sequences
\cite[Chapter~2, Proposition~7]{fulton}.  
If $a_1\geq 1$, then taking Hilbert
functions in the exact sequence
$$
0\longrightarrow
S/(L,t_r^{a_r}\cdots
t_s^{a_s})[-a_1]
\stackrel{t_1^{a_1}}{\longrightarrow} S/(L,t^a)\longrightarrow
S/(L,t_1^{a_1})\longrightarrow 0, 
$$
and noticing that $\dim(S/(L,t_1^{a_1}))=0$, the first equality follows. Thus we may assume that 
$t^a$ has the form $t^a=t_r^{a_r}\cdots t_s^{a_s}$ and $a_i=0$ for $i<r$.

We proceed by induction on $s\geq 2$. Assume $s=2$. Then $r=2$, $t^a=t_2^{a_2}$,
$(L,t^a)=(t_2^{a_2})$, and the degree of $S/(L,t^a)$ is $a_2$, as
required. Assume $s\geq 3$. If $a_i=0$ for $i>r$, then
$(L,t^a)=(L,t_r^{a_r})$ is a complete intersection and the required
formula follows from \cite[Corollary~3.3]{Sta1}. Thus we may assume
that $a_i\geq 1$ for some $i>r$. There is an exact sequence 
\begin{eqnarray}\label{dec18-15}
& & 0\longrightarrow
S/(t_2^{d_2},\ldots,t_{r-1}^{d_{r-1}},t_r^{d_r-a_r},t_{r+1}^{d_{r+1}},\ldots,t_s^{d_s},
t_{r+1}^{a_{r+1}}\cdots t_s^{a_s})[-a_r]
\stackrel{t_r^{a_r}}{\longrightarrow} \\ 
& & \ \ \ \ \ \ \ \ \ \ \ \ \ \ \ \ \ \ \ \ \ \ S/(L,t^a)\longrightarrow
S/(t_2^{d_2},\ldots,t_{r-1}^{d_{r-1}},t_r^{a_r},t_{r+1}^{d_{r+1}},\ldots,t_s^{d_s})\longrightarrow
0.\nonumber
\end{eqnarray}

Notice that the ring on the right is a complete intersection and the
ring on the left is isomorphic to the tensor product
\begin{equation}\label{dec19-15}
K[t_2,\ldots,t_r]/(t_2^{d_2},\ldots,t_{r-1}^{d_{r-1}},t_r^{d_r-a_r})
\otimes_K K[t_1,t_{r+1},\ldots, t_s]/(t_{r+1}^{d_{r+1}},\ldots,t_s^{d_s},
t_{r+1}^{a_{r+1}}\cdots t_s^{a_s}).
\end{equation}

Hence, taking Hilbert series in Eq.~(\ref{dec18-15}), and applying
\cite[Corollary~3.3]{Sta1}, the theorem of
Hilbert--Serre \cite[p.~58]{Sta1}, and
Proposition~\ref{inter-tensoraa}, we can write the Hilbert series of
$S/(L,t^a)$ as 
\begin{eqnarray*}
& &F(S/(L,t^a),x)=\frac{x^{a_r}(1-x^{d_2})\cdots(1-x^{d_{r-1}})(1-x^{d_r-a_r})}{(1-x)^{r-1}}
\frac{g(x)}{(1-x)}+\\
& &\ \ \ \ \ \ \ \ \ \ \ \ \ \ \ \ \ \
\frac{(1-x^{d_2})\cdots(1-x^{d_{r-1}})(1-x^{a_r})(1-x^{d_{r+1}})\cdots
(1-x^{d_s})}{(1-x)^s},
\end{eqnarray*}
where $g(x)/(1-x)$ is the Hilbert series of the second ring in the
tensor product of Eq.~(\ref{dec19-15}) and $g(1)$ is its degree.  By induction hypothesis 
$$g(1)=d_{r+1}\cdots d_s-(d_{r+1}-a_{r+1})\cdots (d_s-a_s).$$ 

Therefore, writing  $F(S/(L,t^a),x)=h(x)/(1-x)$ with $h(x)\in\mathbb{Z}[x]$ and
$h(1)>0$, and recalling that $h(1)$ is the degree of $S/(L,t^a)$, we
get
\begin{eqnarray*}
\deg(S/(L,t^a))=h(1)&=& d_2\cdots d_{r-1}(d_r-a_r)g(1)+d_2\cdots
d_{r-1}a_rd_{r+1}\cdots d_s\\
&=& d_2\cdots d_s- (d_2-a_2)\cdots(d_s-a_s).\quad \quad\Box
\end{eqnarray*}

\begin{lemma}\label{dec28-15} {\rm (A)} Let $L$ be the ideal of $S$ generated by 
$t_1^{d_2},\ldots,t_{p-1}^{d_p},t_p^{c_p}t_{p+1}^{c_{p+1}},t_{p+2}^{d_{p+2}},\ldots
,t_s^{d_s}$, 
where $p\geq 1$, $1\leq c_p\leq
c_{p+1}$ and $d_i\geq 1$ for all $i$. 
If $t^a=t_1^{a_1}\cdots t_s^{a_s}$ is not in $L$, 
$d_{p+1}=c_p+c_{p+1}$, and $a_i\geq 1$ for some $i$, then the degree of $S/(L,t^a)$ is equal to 
$$
\begin{array}{l}
{\rm (i)}\ \ \ \ d_2\cdots d_s- (c_{p+1}-a_{p+1})\displaystyle
\prod_{i=1}^{p-1}(d_{i+1}-a_i)\prod_{i=p+2}^s(d_i-a_i)\mbox{ if }
a_p\geq c_p;\\
{\rm (ii.1)}\ d_2\cdots d_s-
(c_p-a_p)\displaystyle
\prod_{i=1}^{p-1}(d_{i+1}-a_i)\prod_{i=p+2}^s(d_i-a_i)\mbox{ if }
a_p<c_p,\, a_{p+1}\geq c_{p+1};\\
{\rm (ii.2)}\ d_2\cdots d_s- (d_{p+1}-a_p-a_{p+1})\displaystyle
\prod_{i=1}^{p-1}(d_{i+1}-a_i)\prod_{i=p+2}^s(d_i-a_i)\mbox{ if }
a_p<c_p,\, a_{p+1}<c_{p+1}.
\end{array}
$$
\begin{itemize}
\item[\rm(B)] Let $I$ be a graded ideal such that 
$L={\rm in}_\prec(I)$. If $0\leq k\leq s-2$, $\ell$ are integers such that 
$d=\sum_{i=2}^{k+1}\left(d_i-1\right)+\ell$ and $1\leq \ell \leq
d_{k+2}-1$, then ${\rm fp}_I(d)\leq (d_{k+2}-\ell)d_{k+3}\cdots d_s$.
\end{itemize}
\end{lemma}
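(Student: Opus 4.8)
The plan is to prove (A) by a single colon-ideal computation and to deduce (B) by exhibiting one convenient monomial in $\mathcal{M}_{\prec,d}$. For (A), I would start from the exact sequence of graded modules
$$
0 \longrightarrow (S/(L:t^a))[-\deg(t^a)] \stackrel{t^a}{\longrightarrow} S/L \longrightarrow S/(L,t^a) \longrightarrow 0,
$$
so that Hilbert series are additive and $\deg(S/(L,t^a))=\deg(S/L)-\deg(S/(L:t^a))$ once all three rings are shown to have dimension $1$. Since $L$ is a complete intersection of forms of degrees $d_2,\ldots,d_s$, one has $\deg(S/L)=d_2\cdots d_s$ by \cite[Corollary~3.3]{Sta1}. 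The heart of the matter is then to identify $L:t^a$. Because $L$ is a monomial ideal and $t^a$ a monomial, $L:t^a$ is generated by the monomials $t^{g}/\gcd(t^{g},t^a)$ as $t^{g}$ runs over the generators of $L$: each pure power $t_i^{e_i}$ reduces to $t_i^{e_i-a_i}$, while the mixed generator $t_p^{c_p}t_{p+1}^{c_{p+1}}$ reduces to $t_p^{c_p-\min(c_p,a_p)}t_{p+1}^{c_{p+1}-\min(c_{p+1},a_{p+1})}$.

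The three cases of the statement are exactly the three possibilities for this last monomial: it collapses to the pure power $t_{p+1}^{c_{p+1}-a_{p+1}}$ when $a_p\geq c_p$ (case (i)), to $t_p^{c_p-a_p}$ when $a_p<c_p$ and $a_{p+1}\geq c_{p+1}$ (case (ii.1)), and stays mixed of degree $d_{p+1}-a_p-a_{p+1}$ when $a_p<c_p$ and $a_{p+1}<c_{p+1}$ (case (ii.2)). In each case the generators of $L:t^a$ have pairwise disjoint supports, so $L:t^a$ is a height $s-1$ complete intersection of dimension $1$ whose degree, again by \cite[Corollary~3.3]{Sta1}, is the product of the degrees of its generators; this product is precisely the subtracted term in (i), (ii.1) and (ii.2). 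The hypothesis that $t^a\notin L$ with some $a_i\geq 1$ forces this product to be strictly smaller than $d_2\cdots d_s$ (in the collapsed cases the mixed factor drops below $d_{p+1}=c_p+c_{p+1}$ since $c_p,c_{p+1}\geq 1$, and otherwise some pure factor drops), so the poles at $x=1$ do not cancel, $\dim(S/(L,t^a))=1$, and the degree subtraction is valid.

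For (B), I would use that $\deg(S/I)=\deg(S/L)=d_2\cdots d_s$ and that, by definition, ${\rm fp}_I(d)=\deg(S/L)-\max\{\deg(S/(L,t^a)) : t^a\in\mathcal{M}_{\prec,d}\}$; hence it suffices to produce a single $t^a\in\mathcal{M}_{\prec,d}$ of degree $d$ with $\deg(S/(L,t^a))\geq d_2\cdots d_s-(d_{k+2}-\ell)d_{k+3}\cdots d_s$. I would build $t^a$ so as to saturate the generators of degrees $d_2,\ldots,d_{k+1}$ to exponent (degree$-1$), to load the generator of degree $d_{k+2}$ with total exponent $\ell$, and to leave the remaining variables untouched. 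By the formula in (A) this makes the factors of $\deg(S/(L:t^a))$ coming from generators $2,\ldots,k+1$ equal to $1$, the factor from generator $k+2$ equal to $d_{k+2}-\ell$, and the factors from $k+3,\ldots,s$ equal to $d_{k+3},\ldots,d_s$, so $\deg(S/(L,t^a))=d_2\cdots d_s-(d_{k+2}-\ell)d_{k+3}\cdots d_s$ and the claimed bound on ${\rm fp}_I(d)$ follows. That $t^a\in\mathcal{M}_{\prec,d}$ is then checked: $t^a\notin L$ because no generator divides it (every exponent is kept strictly below the threshold that would absorb a generator), and $t^a$ is a zero-divisor of $S/L$ automatically, since $L$ is unmixed with exactly two minimal primes, namely the ideal generated by all variables except $t_{p+1}$ and the one generated by all variables except $t_p$, whose union contains every nonconstant monomial.

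I expect the main obstacle to be the bookkeeping around the mixed generator $t_p^{c_p}t_{p+1}^{c_{p+1}}$. In (A) this is where the single colon computation genuinely splits into the three listed cases, and in (B) it is where the construction of $t^a$ is most delicate: when the distinguished index $k+2$ equals $p+1$, I must realize the target factor $d_{p+1}-\ell$ with total exponent $\ell$ on $t_p$ and $t_{p+1}$ without letting $t_p^{c_p}t_{p+1}^{c_{p+1}}$ divide $t^a$. This requires splitting $\ell=a_p+a_{p+1}$ with $a_p<c_p$ and $a_{p+1}<c_{p+1}$ when $\ell\leq d_{p+1}-2$ (case (ii.2)), and passing to the boundary choice $a_p=c_p,\ a_{p+1}=c_{p+1}-1$ when $\ell=d_{p+1}-1$ (case (i)). Verifying that all exponents are nonnegative, that $t^a\notin L$, and that the total degree is exactly $d=\sum_{i=2}^{k+1}(d_i-1)+\ell$ is then routine once these choices are fixed.
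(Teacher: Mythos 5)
Your proposal is correct, and for part (A) it takes a genuinely different route from the paper's. The paper proves (A) by peeling off the mixed generator: for each of the cases (i), (ii.1), (ii.2) it sets up an exact sequence with middle term $S/(L,t^a)$ and multiplication map $t_p^{c_p}$, whose outer terms are quotients by ideals of the pure-power type, and then invokes Lemma~\ref{dec20-15} (itself proved by induction on $s$ with a tensor-product decomposition of Hilbert series) together with Stanley's complete intersection formula. You instead make a single colon computation: $L:t^a$ is generated by $m/\gcd(m,t^a)$ as $m$ runs over the generators of $L$, these monomials have pairwise disjoint supports and hence form a complete intersection of height $s-1$ whose degree is the product of their degrees, and the exact sequence $0\to \bigl(S/(L:t^a)\bigr)[-\deg t^a]\xrightarrow{t^a} S/L\to S/(L,t^a)\to 0$ yields $\deg(S/(L,t^a))=\deg(S/L)-\deg(S/(L:t^a))$. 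Your route is more uniform (the three cases are exactly the three possible shapes of the colon of the mixed generator), it is self-contained (the same computation reproves Lemma~\ref{dec20-15}, which the paper's proof needs as input), and it makes explicit two points that the paper's route leaves implicit: the strict inequality $\deg(S/(L:t^a))<\deg(S/L)$, needed so that the poles at $x=1$ do not cancel and the subtraction genuinely computes the degree of a one-dimensional ring, and, for membership in $\mathcal{M}_{\prec,d}$ in part (B), the observation that every positive-degree monomial is a zero-divisor modulo $L$ because the two minimal primes of $L$ jointly contain all the variables. For part (B) your argument is essentially the paper's: exhibit one monomial of degree $d$ in $\mathcal{M}_{\prec,d}$ whose quotient has degree $d_2\cdots d_s-(d_{k+2}-\ell)d_{k+3}\cdots d_s$; note that your target is stated correctly, whereas the paper's displayed target Eq.~(\ref{feb1-16}) omits the leading term $d_2\cdots d_s$, an evident slip given that it is justified by the formulas of part (A). The paper writes out five explicit monomials according to the position of the mixed generator relative to $k$, while your generator-by-generator construction, with the split $\ell\le d_{p+1}-2$ versus $\ell=d_{p+1}-1$ when the loaded generator is the mixed one, produces equivalent choices and is equally valid.
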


\begin{proof} (A) Case (i): Assume $a_p\geq c_p$. If $a_i=0$ for $i\neq
p$, then $t^a=t_p^{a_p}$, and by the first equality of
Lemma~\ref{dec20-15}, and using \cite[Corollary~3.3]{Sta1}, we get
\begin{eqnarray*}
\deg(S/(L,t^a))&=&\deg(S/(t_1^{d_2},\ldots,t_{p-1}^{d_p},t_p^{a_p},t_{p+2}^{d_{p+2}},\ldots
,t_s^{d_s},t_p^{c_p}t_{p+1}^{c_{p+1}}))\\
&=&\deg(S/(t_1^{d_2},\ldots,t_{p-1}^{d_p},t_p^{c_p},t_{p+2}^{d_{p+2}},\ldots
,t_s^{d_s}))=d_2\cdots d_pc_pd_{p+2}\cdots d_s\\
&=&d_2\cdots d_p(d_{p+1}-c_{p+1})d_{p+2}\cdots d_s=d_2\cdots
d_s-c_{p+1}d_2\cdots d_pd_{p+2}\cdots d_s,
\end{eqnarray*}  
as required. We may now assume that $a_i\geq 1$ for some $i\neq p$. 
As $t^a\notin L$ and $a_p\geq c_p$, one has $a_i<d_{i+1}$ for
$i=1,\ldots,p-1$, $a_{p+1}<c_{p+1}$, and $a_i<d_i$ for
$i=p+2,\ldots,s$. Therefore from the exact
sequence 
\begin{eqnarray*}
& & 0\longrightarrow
S/(t_1^{d_2},\ldots,t_{p-1}^{d_p},t_{p+1}^{c_{p+1}},t_{p+2}^{d_{p+2}},\ldots,t_s^{d_s},
t_1^{a_1}\cdots t_{p-1}^{a_{p-1}}t_p^{a_p-c_p}t_{p+1}^{a_{p+1}}\cdots 
t_s^{a_s})[-c_p]
\stackrel{t_p^{c_p}}{\longrightarrow} \\ 
& & \ \ \ \ \  \ \ \ \ \  \ \ \ \ \ \ \ \ S/(L,t^a)\longrightarrow
S/(t_1^{d_2},\ldots,t_{p-1}^{d_p},t_p^{c_p},t_{p+2}^{d_{p+2}},\ldots,t_s^{d_s})\longrightarrow
0,
\end{eqnarray*}
and using Lemma~\ref{dec20-15} and \cite[Corollary~3.3]{Sta1}, 
the required equality follows. 

Case (ii): Assume $a_p<c_p$. If $a_i=0$ for $i\neq
p$, then $t^a=t_p^{a_p}$ and $0=a_{p+1}<c_{p+1}$. Hence, by
\cite[Corollary~3.3]{Sta1}, we get 
\begin{eqnarray*}
\deg(S/(L,t^a))&=&\deg(S/(t_1^{d_2},\ldots,t_{p-1}^{d_p},t_p^{a_p},t_{p+2}^{d_{p+2}},\ldots
,t_s^{d_s}))\\
&=&d_2\cdots d_pa_pd_{p+2}\cdots d_s\\
&=&d_2\cdots
d_s-(d_{p+1}-a_p)d_2\cdots d_pd_{p+2}\cdots d_s,
\end{eqnarray*}  
as required. We may now assume that $a_i\geq 1$ for some $i\neq p$.
Consider the exact
sequence 
\begin{eqnarray}\label{dec18-15-1}
& & 0\longrightarrow
S/(t_1^{d_2},\ldots,t_{p-1}^{d_p},t_{p+1}^{c_{p+1}},t_{p+2}^{d_{p+2}},\ldots,t_s^{d_s},
t_1^{a_1}\cdots t_{p-1}^{a_{p-1}}t_{p+1}^{a_{p+1}}\cdots 
t_s^{a_s})[-c_p]
\stackrel{t_p^{c_p}}{\longrightarrow}\nonumber \\ 
& & \ \ \ \ \  \ \ \ \ \  \ \ \ \ \ \ \ \ S/(L,t^a)\longrightarrow
S/(t_1^{d_2},\ldots,t_{p-1}^{d_p},t_p^{c_p},t_{p+2}^{d_{p+2}},\ldots,t_s^{d_s},
t_1^{a_1}\cdots t_s^{a_s})\longrightarrow
0.
\end{eqnarray}

Subcase (ii.1): Assume $a_{p+1}\geq c_{p+1}$. As $t^a\notin L$, in
our situation, one has $a_i<d_{i+1}$ for 
$i=1,\ldots,p-1$, $a_p<c_p$, and $a_i<d_i$ for
$i=p+2,\ldots,s$. If $a_i=0$ for $i\neq p+1$, then taking Hilbert
series in Eq.~(\ref{dec18-15-1}), and noticing that the ring on the
right has dimension $0$, we get 
\begin{eqnarray*}
\deg(S/(L,t^a))&=&d_2\cdots d_pc_{p+1}d_{p+2}\cdots d_s\\
&=& d_2\cdots d_s-c_pd_2\cdots d_{p}d_{p+2}\cdots d_s,
\end{eqnarray*}
as required. Thus we may now assume that $a_i\geq 1$ for some $i\neq
p+1$. Taking Hilbert series in Eq.~(\ref{dec18-15-1}), and using \cite[Corollary~3.3]{Sta1}, we obtain
\begin{eqnarray*}
\deg(S/(L,t^a))&=&d_2\cdots d_pc_{p+1}d_{p+2}\cdots d_s+\\
& & \deg(S/(t_1^{d_2},\ldots,t_{p-1}^{d_p},t_p^{c_p},t_{p+2}^{d_{p+2}},\ldots,t_s^{d_s},
t_1^{a_1}\cdots t_s^{a_s})).
\end{eqnarray*}  

Therefore, using Lemma~\ref{dec20-15}, the required equality follows.

Subcase (ii.2): Assume $a_{p+1}<c_{p+1}$. If $a_i=0$ for $i\neq p+1$, taking Hilbert
series in Eq.~(\ref{dec18-15-1}), and noticing that the ring on the
right has dimension $0$, by Lemma~\ref{dec20-15}, we get 
\begin{eqnarray*}
\deg(S/(L,t^a))&=&d_2\cdots d_pc_{p+1}d_{p+2}\cdots d_s-d_2\cdots
d_p(c_{p+1}-a_{p+1})d_{p+2}\cdots d_s\\
&=& d_2\cdots d_pa_{p+1}d_{p+2}\cdots d_s\\
& =& d_2\cdots d_s-(d_{p+1}-a_{p+1})d_2\cdots d_pd_{p+2}\cdots d_s,
\end{eqnarray*}
as required. Thus we may now assume that $a_i\geq 1$ for some $i\neq
p+1$. Taking Hilbert series in Eq.~(\ref{dec18-15-1}), and applying
Lemma~\ref{dec20-15} to the ends of Eq.~(\ref{dec18-15-1}), the
required equality follows. 

(B)  It suffices to
find a monomial $t^b$ in $\mathcal{M}_{\prec,d}$ such that 
\begin{equation}\label{feb1-16}
\deg(S/({\rm in}_\prec(I),t^b))=(d_{k+2}-\ell)d_{k+3}\cdots d_s.
\end{equation}
There are five cases to consider:
$$
t^b=\left\{\begin{array}{ll}
t_1^{d_2-1}\cdots
t_{p-1}^{d_p-1}t_p^{c_p}t_{p+1}^{c_{p+1}-1}t_{p+2}^{d_{p+2}-1}\cdots
t_{k+1}^{d_{k+1}-1}t_{k+2}^\ell&\mbox{if }k\geq p+1,\\
t_1^{d_2-1}\cdots
t_{p-1}^{d_p-1}t_p^{c_p}t_{p+1}^{c_{p+1}-1}t_{p+2}^\ell&\mbox{if }k=p ,\\
t_1^{d_2-1}\cdots
t_{k}^{d_{k+1}-1}t_{k+1}^\ell&\mbox{if } k\leq p-2,\\
t_1^{d_2-1}\cdots
t_{p-1}^{d_p-1}t_p^{c_p}t_{p+1}^{\ell-c_{p}}&\mbox{if } k=p-1\mbox{ and }\ell\geq c_p,\\
t_1^{d_2-1}\cdots
t_{p-1}^{d_p-1}t_p^{\ell}&\mbox{if } k=p-1 \mbox{ and }\ell< c_p.
\end{array}\right.
$$
In each case, by the formulas for the degree of part (A), 
we get the equality of Eq.~(\ref{feb1-16}).
\end{proof}

If $f\in S$, the {\it quotient ideal\/} of $I$ with
respect to $f$ is given by $(I\colon f)=\{h\in S\vert\, hf\in I\}$.

\begin{lemma}{\cite[Lemma~4.1]{hilbert-min-dis}}\label{degree-initial-footprint}
Let $I\subset S$ be an unmixed graded ideal and let $\prec$ be 
a monomial order. If $f\in S$ is homogeneous and $(I\colon f)\neq I$, then
$$
\deg(S/(I,f))\leq\deg(S/({\rm
in}_\prec(I),{\rm in}_\prec(f)))\leq\deg(S/I),
$$
and $\deg(S/(I,f))<\deg(S/I)$ if $I$ is an unmixed radical ideal and $f\notin I$.
\end{lemma}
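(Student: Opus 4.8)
The plan is to reduce both inequalities to two standard facts. First, passing to the initial ideal preserves the Hilbert function, hence the dimension and the degree: $\deg(S/J)=\deg(S/{\rm in}_\prec(J))$ and $\dim(S/J)=\dim(S/{\rm in}_\prec(J))$ for every graded ideal $J$. Second, if $J\subseteq J'$ are graded ideals with $\dim(S/J)=\dim(S/J')$, then $\deg(S/J')\leq\deg(S/J)$; this follows from the short exact sequence $0\to J'/J\to S/J\to S/J'\to 0$, which makes Hilbert functions additive, so $H_J(d)\geq H_{J'}(d)$ for all $d$, and comparing the leading terms of the two Hilbert polynomials (which have the same degree $\dim(S/I)-1$, or comparing $K$-dimensions when $\dim(S/I)=0$) yields the degree inequality. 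The whole point of the equal-dimension hypothesis is to turn the pointwise comparison $H_J\geq H_{J'}$ into a comparison of degrees.

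The crucial preliminary step is therefore to check that every quotient in the statement has dimension $\dim(S/I)$. First I would dispose of the case $f\in I$: then $(I,f)=I$ and ${\rm in}_\prec(f)\in{\rm in}_\prec(I)$, so all three terms equal $\deg(S/I)$. So assume $f\notin I$. Since $(I\colon f)\neq I$, the class of $f$ is a zero-divisor on $S/I$, hence $f$ lies in some associated prime $\mathfrak{p}$ of $I$; as $I$ is unmixed, ${\rm ht}(\mathfrak{p})={\rm ht}(I)$, and $(I,f)\subseteq\mathfrak{p}$ gives $\dim(S/I)=\dim(S/\mathfrak{p})\leq\dim(S/(I,f))\leq\dim(S/I)$, so $\dim(S/(I,f))=\dim(S/I)$, and likewise $\dim(S/{\rm in}_\prec(I,f))=\dim(S/I)$. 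The chain ${\rm in}_\prec(I)\subseteq({\rm in}_\prec(I),{\rm in}_\prec(f))\subseteq{\rm in}_\prec(I,f)$ then sandwiches $\dim(S/({\rm in}_\prec(I),{\rm in}_\prec(f)))$ between two copies of $\dim(S/I)$, so every relevant quotient has dimension $\dim(S/I)$. The containment $({\rm in}_\prec(I),{\rm in}_\prec(f))\subseteq{\rm in}_\prec(I,f)$ holds because ${\rm in}_\prec(I,f)$ contains ${\rm in}_\prec(g)$ for every $g\in(I,f)$, in particular every generator of ${\rm in}_\prec(I)$ as well as ${\rm in}_\prec(f)$.

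With all dimensions equalized, the two inequalities drop out of the monotonicity fact. The inclusion ${\rm in}_\prec(I)\subseteq({\rm in}_\prec(I),{\rm in}_\prec(f))$ gives $\deg(S/({\rm in}_\prec(I),{\rm in}_\prec(f)))\leq\deg(S/{\rm in}_\prec(I))=\deg(S/I)$, the right-hand inequality. The inclusion $({\rm in}_\prec(I),{\rm in}_\prec(f))\subseteq{\rm in}_\prec(I,f)$ gives $\deg(S/{\rm in}_\prec(I,f))\leq\deg(S/({\rm in}_\prec(I),{\rm in}_\prec(f)))$, and since $\deg(S/(I,f))=\deg(S/{\rm in}_\prec(I,f))$ this is exactly the left-hand inequality.

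For the strict inequality, assume moreover that $I$ is radical, so $I=\bigcap_{i=1}^m\mathfrak{p}_i$ where the $\mathfrak{p}_i$ are the minimal primes, all of height ${\rm ht}(I)$; then degree is additive over top-dimensional primes, $\deg(S/I)=\sum_{i=1}^m\deg(S/\mathfrak{p}_i)$. Localizing at a minimal prime $\mathfrak{p}_i$ makes $(S/I)_{\mathfrak{p}_i}$ a field, so the top-dimensional minimal primes of $(I,f)$ are exactly the $\mathfrak{p}_i$ that contain $f$, each with multiplicity one, and $\deg(S/(I,f))=\sum_{i\colon f\in\mathfrak{p}_i}\deg(S/\mathfrak{p}_i)$. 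Since $f\notin I$ there is some $j$ with $f\notin\mathfrak{p}_j$, so the term $\deg(S/\mathfrak{p}_j)\geq 1$ is missing and $\deg(S/(I,f))<\deg(S/I)$. I expect the main obstacle to be precisely the dimension bookkeeping of the second paragraph: the Hilbert-function comparison only controls degrees when the quotients share the dimension of $S/I$, and it is the unmixedness of $I$ together with the zero-divisor hypothesis $(I\colon f)\neq I$---forcing $f$ into a top-dimensional associated prime---that prevents a drop in dimension and makes the argument valid.
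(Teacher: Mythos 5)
Since this lemma is quoted in the paper from \cite[Lemma~4.1]{hilbert-min-dis} without an internal proof, there is no in-paper argument to compare against; your proof is correct and follows essentially the same standard route as the cited source: Macaulay's theorem that $H_J=H_{\mathrm{in}_\prec(J)}$, the inclusions $\mathrm{in}_\prec(I)\subseteq(\mathrm{in}_\prec(I),\mathrm{in}_\prec(f))\subseteq\mathrm{in}_\prec((I,f))$, and the monotonicity of degree under inclusion of ideals with equal-dimensional quotients. Your dimension bookkeeping---using unmixedness plus the zero-divisor hypothesis $(I\colon f)\neq I$ to place $f$ in a top-dimensional associated prime, which sandwiches all three quotients into dimension $\dim(S/I)$---is precisely the point that legitimizes the degree comparisons, and the associativity formula over the minimal primes correctly yields the strict inequality in the radical case.
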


\begin{remark}\rm  
Let $I\subset S$ be an unmixed graded ideal of
dimension $1$. If $f\in S_d$, then
$(I\colon f)=I$ if and only if $\dim(S/(I,f))=0$. In this case 
 $\deg(S/(I,f))$ could be greater than $\deg(S/I)$.
\end{remark}

\begin{lemma}{\cite[Lemma~3.2]
{hilbert-min-dis}}\label{degree-formula-for-the-number-of-zeros-proj}
Let $\mathbb{X}$ be a finite subset of 
$\mathbb{P}^{s-1}$ over a field $K$ and let $I(\mathbb{X})\subset S$ be its
graded vanishing ideal. If 
$0\neq f\in S$ is homogeneous, then the number of zeros of $f$ in
$\mathbb{X}$ is given by 
$$
|V_{\mathbb{X}}(f)|=\left\{
\begin{array}{cl}
\deg S/(I(\mathbb{X}),f)&\mbox{if }(I(\mathbb{X})\colon f)\neq
I(\mathbb{X}),\\ 
0&\mbox{if }(I(\mathbb{X})\colon f)=I(\mathbb{X}).
\end{array}
\right.
$$
\end{lemma}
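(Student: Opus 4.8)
The plan is to reduce both the case distinction and the degree formula to the primary decomposition of $I(\mathbb{X})$ together with the additivity of Hilbert series. Throughout write $d=\deg(f)$ and let $\mathbb{X}'=\mathbb{X}\setminus V_\mathbb{X}(f)$ be the set of points of $\mathbb{X}$ at which $f$ does not vanish, so that $\mathbb{X}=V_\mathbb{X}(f)\sqcup\mathbb{X}'$. By Remark~\ref{jun8-16} the ring $S/I(\mathbb{X})$ is reduced of dimension $1$, so its zero-divisors are exactly the union of its minimal primes, which by Lemma~\ref{primdec-ix-a} are the ideals $I_{[\beta]}$ with $[\beta]\in\mathbb{X}$. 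Hence $f$ is a non-zero-divisor modulo $I(\mathbb{X})$ iff $f\notin I_{[\beta]}$ for every $[\beta]$, i.e. iff $f([\beta])\neq 0$ for all $[\beta]$, i.e. iff $V_\mathbb{X}(f)=\emptyset$. This settles the second case: $(I(\mathbb{X})\colon f)=I(\mathbb{X})$ exactly when $V_\mathbb{X}(f)=\emptyset$, giving $|V_\mathbb{X}(f)|=0$.

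First I would pin down the quotient ideal in the remaining case. Since quotients distribute over finite intersections and each $I_{[\beta]}$ is prime (Lemma~\ref{primdec-ix-a}), one has $(I_{[\beta]}\colon f)=S$ when $[\beta]\in V_\mathbb{X}(f)$ and $(I_{[\beta]}\colon f)=I_{[\beta]}$ when $[\beta]\in\mathbb{X}'$, whence
$$
(I(\mathbb{X})\colon f)=\bigcap_{[\beta]\in\mathbb{X}}(I_{[\beta]}\colon f)=\bigcap_{[\beta]\in\mathbb{X}'}I_{[\beta]}=I(\mathbb{X}').
$$

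Next I would exploit the graded exact sequence induced by multiplication by $f$,
$$
0\longrightarrow\big[(I(\mathbb{X})\colon f)/I(\mathbb{X})\big](-d)\stackrel{f}{\longrightarrow}(S/I(\mathbb{X}))(-d)\longrightarrow S/I(\mathbb{X})\longrightarrow S/(I(\mathbb{X}),f)\longrightarrow 0,
$$
whose kernel term is $[I(\mathbb{X}')/I(\mathbb{X})](-d)$ by the previous step. Taking Hilbert series (additive on exact sequences, as in the proof of Lemma~\ref{dec20-15}) and substituting the series of $0\to I(\mathbb{X}')/I(\mathbb{X})\to S/I(\mathbb{X})\to S/I(\mathbb{X}')\to 0$, the two copies of $F(S/I(\mathbb{X}),x)$ cancel and one is left with the clean identity $F(S/(I(\mathbb{X}),f),x)=F(S/I(\mathbb{X}),x)-x^{d}F(S/I(\mathbb{X}'),x)$. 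Writing $F(S/I(\mathbb{X}),x)=g_1(x)/(1-x)$ and $F(S/I(\mathbb{X}'),x)=g_2(x)/(1-x)$, the numerator of the right-hand side is $g_1(x)-x^{d}g_2(x)$; evaluating it at $x=1$ and using $\deg(S/I(\mathbb{X}))=|\mathbb{X}|$ and $\deg(S/I(\mathbb{X}'))=|\mathbb{X}'|$ from Proposition~\ref{jan4-15}(ii) yields
$$
\deg(S/(I(\mathbb{X}),f))=g_1(1)-g_2(1)=|\mathbb{X}|-|\mathbb{X}'|=|V_\mathbb{X}(f)|.
$$

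The step I expect to demand the most care is the passage from the Hilbert series to the degree. One must check that $S/(I(\mathbb{X}),f)$ really has dimension $1$ in the case $V_\mathbb{X}(f)\neq\emptyset$, so that $g_1(1)-g_2(1)$ is legitimately its degree; happily this is automatic, since the computed value $|V_\mathbb{X}(f)|$ is positive, which forces the denominator to be exactly $(1-x)$. One should also record the degenerate subcase $\mathbb{X}'=\emptyset$ (that is, $f\in I(\mathbb{X})$), where $I(\mathbb{X}')=S$, $g_2\equiv 0$, and the identity collapses to $\deg(S/(I(\mathbb{X}),f))=|\mathbb{X}|=|V_\mathbb{X}(f)|$. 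Everything else is formal, resting only on the additivity of Hilbert series and on the primary decomposition of Lemma~\ref{primdec-ix-a}.
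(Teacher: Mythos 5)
Your proof is correct, but there is nothing in this paper to compare it against: the lemma is quoted verbatim from \cite[Lemma~3.2]{hilbert-min-dis} and no proof is reproduced here. Judged on its own, your argument is sound and, pleasantly, self-contained relative to facts the paper does state: Remark~\ref{jun8-16} and Lemma~\ref{primdec-ix-a} identify the zero-divisors of the reduced one-dimensional ring $S/I(\mathbb{X})$ with $\bigcup_{[\beta]\in\mathbb{X}}I_{[\beta]}$, which settles the case $(I(\mathbb{X})\colon f)=I(\mathbb{X})$; the computation $(I(\mathbb{X})\colon f)=I(\mathbb{X}')$, $\mathbb{X}'=\mathbb{X}\setminus V_\mathbb{X}(f)$, is right because colon ideals distribute over intersections and each $I_{[\beta]}$ is prime; and the telescoped identity $F(S/(I(\mathbb{X}),f),x)=F(S/I(\mathbb{X}),x)-x^{d}F(S/I(\mathbb{X}'),x)$ follows from additivity of Hilbert series on your two exact sequences. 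Crucially, you also make the observation that is easiest to overlook: since $g_1(1)-g_2(1)=|V_\mathbb{X}(f)|>0$, the pole of the series at $x=1$ has order exactly one, so Hilbert--Serre legitimately returns $g_1(1)-g_2(1)$ as the degree; and you dispose of the subcase $f\in I(\mathbb{X})$ separately. By contrast, the cited source argues via additivity (associativity) of the degree: one checks that the height-$(s-1)$ minimal primes of $(I(\mathbb{X}),f)$ are exactly the $I_{[\beta]}$ with $[\beta]\in V_\mathbb{X}(f)$ and that each corresponding primary component is $I_{[\beta]}$ itself (localize at $I_{[\beta]}$), so the degree is the number of such points. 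That route needs the associativity formula as an external input, while yours uses only the Hilbert-series bookkeeping the paper itself employs in the proof of Lemma~\ref{dec20-15}, so it fits the present paper's toolkit well. Two cosmetic remarks: the arrow you label $f$ in the four-term sequence is just the inclusion of the kernel (multiplication by $f$ is the next map), and the equivalence $f\in I_{[\beta]}\Leftrightarrow f([\beta])=0$ uses the homogeneity of $f$, which is worth saying explicitly.
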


\begin{corollary}\label{bounds-for-deg-init-ci-case-1} 
Let $I=I(\mathbb{X})$ be the vanishing ideal of a finite set
$\mathbb{X}$ of
projective points, let $f\in\mathcal{F}_{\prec,d}$, and 
${\rm in}_\prec(f)=t_1^{a_1}\cdots t_s^{a_s}$. If ${\rm
in}_\prec(I)$ 
is generated by 
$t_2^{d_2},\ldots ,t_s^{d_s}$, then there is 
$r\geq 2$ such that $a_r\geq 1$, $a_i\leq d_i-1$ for $i\geq r$, 
$a_i=0$ if $2\leq i<r$, and 
$$
|V_{\mathbb{X}}(f)|\leq d_2\cdots d_s- (d_2-a_2)\cdots(d_s-a_s). 
$$
\end{corollary}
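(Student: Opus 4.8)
The plan is to identify the leading term $t^a={\rm in}_\prec(f)$ as an element of $\mathcal{M}_{\prec,d}$, read off the required structure of its exponents, and then combine three of the preceding results---Lemma~\ref{degree-formula-for-the-number-of-zeros-proj}, Lemma~\ref{degree-initial-footprint}, and Lemma~\ref{dec20-15}---into a single chain of (in)equalities.

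First I would invoke the map ${\rm in}_\prec\colon\mathcal{F}_{\prec,d}\to\mathcal{M}_{\prec,d}$ from the remark following Lemma~\ref{regular-elt-in}: since $f\in\mathcal{F}_{\prec,d}$, its leading monomial $t^a$ lies in $\mathcal{M}_{\prec,d}$, so $t^a$ is a zero-divisor of $S/L$ that is not in $L={\rm in}_\prec(I)=(t_2^{d_2},\ldots,t_s^{d_s})$. From $t^a\notin L$ I obtain $t_i^{d_i}\nmid t^a$ for every $i\geq 2$, that is, $a_i\leq d_i-1$ for $i\geq 2$. Since $S/L$ is a one-dimensional complete intersection, it is unmixed Cohen--Macaulay and its unique associated prime is $(t_2,\ldots,t_s)$, so a monomial is a zero-divisor of $S/L$ precisely when some variable $t_i$ with $i\geq 2$ divides it; hence $r:=\min\{i\geq 2\colon a_i\geq 1\}$ is well-defined, giving $a_r\geq 1$ and $a_i=0$ for $2\leq i<r$. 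This establishes all the structural assertions on $r$ and the exponents $a_i$.

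Next I would pass from the combinatorial degree to the number of zeros. Because $f$ is a zero-divisor of $S/I$, there is $h\notin I$ with $hf\in I$, so $(I(\mathbb{X})\colon f)\neq I(\mathbb{X})$; Lemma~\ref{degree-formula-for-the-number-of-zeros-proj} then gives $|V_{\mathbb{X}}(f)|=\deg(S/(I(\mathbb{X}),f))$. By Remark~\ref{jun8-16} and Lemma~\ref{primdec-ix-a}, $I(\mathbb{X})$ is an unmixed radical graded ideal of dimension one, so Lemma~\ref{degree-initial-footprint} applies and yields
$$
\deg(S/(I(\mathbb{X}),f))\leq\deg(S/(L,t^a)).
$$
Finally, the exponent vector $a$ has exactly the shape required by Lemma~\ref{dec20-15}, namely $t^a=t_1^{a_1}t_r^{a_r}\cdots t_s^{a_s}$ with $r\geq 2$, $a_r\geq 1$, and $a_i\leq d_i-1$ for $i\geq r$, so that lemma computes $\deg(S/(L,t^a))=d_2\cdots d_s-d_2\cdots d_{r-1}(d_r-a_r)\cdots(d_s-a_s)$. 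Since $a_i=0$, hence $d_i-a_i=d_i$, for $2\leq i<r$, the subtracted product equals $(d_2-a_2)\cdots(d_s-a_s)$, and chaining the three displays gives the claimed bound.

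The argument is essentially an assembly of earlier results, so the only real care needed is bookkeeping: checking that the hypotheses of each cited lemma are met---that $I(\mathbb{X})$ is unmixed radical so that Lemma~\ref{degree-initial-footprint} applies, that the zero-divisor hypothesis translates into $(I(\mathbb{X})\colon f)\neq I(\mathbb{X})$, and above all that the exponent pattern forced by $t^a\in\mathcal{M}_{\prec,d}$ matches verbatim the hypotheses of Lemma~\ref{dec20-15}. I expect this last matching of exponent conventions---the reindexing that lets $a_i=0$ for $2\leq i<r$ collapse the two product formulas into one---to be the one place where a careless reading could go wrong.
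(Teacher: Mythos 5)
Your proof is correct and follows essentially the same route as the paper's: pass to the leading monomial via Lemma~\ref{regular-elt-in} (the map $\mathcal{F}_{\prec,d}\to\mathcal{M}_{\prec,d}$), read off the exponent pattern from $t^a$ being a standard monomial and a zero-divisor of $S/{\rm in}_\prec(I)$, and then chain Lemma~\ref{degree-formula-for-the-number-of-zeros-proj}, Lemma~\ref{degree-initial-footprint}, and Lemma~\ref{dec20-15}. Your only additions are explicit justifications the paper leaves implicit (the associated-prime argument for why a zero-divisor monomial must involve some $t_i$ with $i\geq 2$, and the verification that $(I(\mathbb{X})\colon f)\neq I(\mathbb{X})$), which are accurate.
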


\demo 
As $f$ is a zero-divisor of $S/I$, by
Lemma~\ref{regular-elt-in}, $t^a={\rm in}_\prec(f)$ is a zero-divisor
of $S/{\rm in}_\prec(I)$. Hence, there is 
$r\geq 2$ such that $a_r\geq 1$ and $a_i=0$ if $2\leq i<r$. Using 
that $t^a$ is a standard monomial of $S/I$, we get that $a_i\leq d_i-1$ for
$i\geq r$. Therefore, using
Lemma~\ref{degree-formula-for-the-number-of-zeros-proj}  
together with Lemmas~\ref{dec20-15} and \ref{degree-initial-footprint}, we get  
\begin{eqnarray*}
|V_{\mathbb{X}}(f)|&=&\deg(S/(I(\mathbb{X}),f))
\leq\deg(S/({\rm
in}_\prec(I(\mathbb{X})),{\rm in}_\prec(f)))\\
&=&d_2\cdots d_s- (d_2-a_2)\cdots(d_s-a_s).\ \ \ \ \ \ \Box 
\end{eqnarray*}

\begin{lemma}\label{delta-indep-order}
Let $I$ be a graded ideal and let $\prec$ be a monomial order. Then
the minimum distance function $\delta_I$ is independent of $\prec$.
\end{lemma}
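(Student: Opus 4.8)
The plan is to make the monomial order $\prec$ disappear from the formula for $\delta_I(d)$ altogether, by replacing the $\prec$-dependent set $\mathcal{F}_{\prec,d}$ with an intrinsic set. The key observation is that in the defining formula for $\delta_I(d)$ the order $\prec$ enters in exactly one place: the requirement that $f$ be a $K$-linear combination of monomials in $\Delta_\prec(I)$. The remaining ingredients, namely the quantity $\deg(S/(I,f))$ and the two conditions ``$f$ is nonzero'' and ``$f$ is a zero-divisor of $S/I$,'' depend only on the residue class of $f$ modulo $I$ and are therefore insensitive to $\prec$.

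First I would introduce the $\prec$-independent set $\mathcal{G}_d$ consisting of all $f\in S_d$ whose image in $S/I$ is a nonzero zero-divisor, and record the evident inclusion $\mathcal{F}_{\prec,d}\subseteq\mathcal{G}_d$. This inclusion already yields one inequality, namely $\max\{\deg(S/(I,f)) : f\in\mathcal{F}_{\prec,d}\}\leq\max\{\deg(S/(I,f)) : f\in\mathcal{G}_d\}$ whenever $\mathcal{F}_{\prec,d}\neq\emptyset$.

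The heart of the argument is the reverse inequality, which I would obtain by passing to normal forms. Given $f\in\mathcal{G}_d$, fix a Gr\"obner basis of $I$ (which may be taken homogeneous, since $I$ is graded) and let $g$ be the remainder of $f$ under the division algorithm \cite[Theorem~3, p.~63]{CLO}. I would then verify the properties that place $g$ in $\mathcal{F}_{\prec,d}$ while preserving the associated degree: by construction $g$ is a $K$-linear combination of standard monomials, and because $\prec$ is a graded order and $f$ is homogeneous of degree $d$, the remainder $g$ is again homogeneous of degree $d$; since $g\equiv f\pmod I$ and $f\notin I$, we have $g\neq 0$, and $g$ is a zero-divisor of $S/I$ because this property transfers along the congruence $g\equiv f\pmod I$ (the same witness works); finally $(I,g)=(I,f)$, so $\deg(S/(I,g))=\deg(S/(I,f))$. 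Hence $g\in\mathcal{F}_{\prec,d}$ realizes the value $\deg(S/(I,f))$, which gives the reverse inequality and therefore equality of the two maxima.

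Finally I would treat the empty cases: the same normal-form construction shows that $\mathcal{F}_{\prec,d}=\emptyset$ if and only if $\mathcal{G}_d=\emptyset$, so the two branches in the definition of $\delta_I(d)$ are selected under $\prec$-independent conditions. Combining everything, $\delta_I(d)$ equals $\deg(S/I)-\max\{\deg(S/(I,f)) : f\in\mathcal{G}_d\}$ when $\mathcal{G}_d\neq\emptyset$ and $\deg(S/I)$ otherwise, and both expressions are manifestly free of $\prec$. I expect the only delicate points to be the verification that the normal form of a homogeneous polynomial of degree $d$ remains homogeneous of degree $d$ --- which is precisely where the hypotheses that $I$ is graded and $\prec$ is a graded order are used --- and the bookkeeping needed to confirm that the nonzero zero-divisor property is inherited by $g$ from $f$.
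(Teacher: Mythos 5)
Your proposal is correct and follows essentially the same route as the paper: the paper likewise introduces the order-independent set of homogeneous zero-divisors of $S/I$ of degree $d$ not in $I$, reduces each such $f$ to its normal form $h$ via the division algorithm, and uses $(I\colon f)=(I\colon h)$ and $(I,f)=(I,h)$ to identify the two maxima. The only cosmetic difference is that you spell out the inclusion, the empty-case bookkeeping, and the homogeneity of the remainder, which the paper leaves implicit.
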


\begin{proof} Fix a positive integer $d$. Let $\mathcal{F}_d$ be the set of 
all homogeneous zero-divisors of $S/I$ not in $I$ of degree $d$ 
and let $f$ be an element of $\mathcal{F}_d$. Pick a Gr\"obner basis
$g_1,\ldots,g_r$ of $I$. Then, by the division algorithm
\cite[Theorem~3, p.~63]{CLO}, we can write 
$f=\sum_{i=1}^ra_ig_i+h$, 
where $h$ is a homogeneous standard polynomial of $S/I$ of degree $d$. 
Since $(I\colon f)=(I\colon h)$, we get that $h$ is in
$\mathcal{F}_{\prec,d}$. Hence, as $(I,f)=(I,h)$, we get the
equalities:
\begin{eqnarray*}
\delta_I(d)&=&
\deg(S/I)-\max\{\deg(S/(I,f))\vert\, f\in\mathcal{F}_{\prec, d}\}\\
&=&\deg(S/I)-\max\{\deg(S/(I,f))\vert\, f\in\mathcal{F}_d\},
\end{eqnarray*}
that is, $\delta_I(d)$ does not depend on $\mathcal{F}_{\prec,d}$.
\end{proof}

\begin{lemma}\label{dec30-15} Let $I$ be an unmixed graded ideal and
$\prec$ a monomial order. The following hold. 
\begin{itemize}
\item[(a)] $\delta_I(d)\geq {\rm fp}_I(d)$ and 
$\delta_I(d)\geq 0$ for $d\geq 1$.  
\item[(b)] ${\rm fp}_I(d)\geq 0$ if ${\rm in}_\prec(I)$ is unmixed. 
\item[(c)] $\delta_I(d)\geq 1$ if $I$ is radical.
\end{itemize}
\end{lemma}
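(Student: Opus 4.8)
The plan is to derive all three statements from Lemma~\ref{degree-initial-footprint}, together with the observation recorded in the Remark following Lemma~\ref{regular-elt-in} that taking initial terms gives a well-defined map ${\rm in}_\prec\colon\mathcal{F}_{\prec,d}\to\mathcal{M}_{\prec,d}$. Throughout I would keep in mind the two boundary conventions in the definitions of $\delta_I$ and ${\rm fp}_I$: when $\mathcal{F}_{\prec,d}=\emptyset$ one has $\delta_I(d)=\deg(S/I)$, and when $\mathcal{M}_{\prec,d}=\emptyset$ one has ${\rm fp}_I(d)=\deg(S/I)$. I would also use the well-known equality $\deg(S/I)=\deg(S/{\rm in}_\prec(I))$, which holds because $\Delta_\prec(I)$ is a common $K$-basis of $S/I$ and $S/{\rm in}_\prec(I)$, so their Hilbert functions agree.

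For part (a), first suppose $\mathcal{F}_{\prec,d}\neq\emptyset$ and fix $f\in\mathcal{F}_{\prec,d}$. Then $f$ is a nonzero homogeneous zero-divisor which is not in $I$ (its leading monomial is standard, hence $f\notin I$), so $(I\colon f)\neq I$; since $I$ is unmixed, the first inequality of Lemma~\ref{degree-initial-footprint} gives $\deg(S/(I,f))\leq\deg(S/({\rm in}_\prec(I),t^a))$ with $t^a={\rm in}_\prec(f)\in\mathcal{M}_{\prec,d}$. Taking the maximum over $f$ and bounding each term by the maximum over all of $\mathcal{M}_{\prec,d}$ yields $\max_f\deg(S/(I,f))\leq\max_{t^b}\deg(S/({\rm in}_\prec(I),t^b))$, and subtracting from $\deg(S/I)$ gives $\delta_I(d)\geq{\rm fp}_I(d)$. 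The case $\mathcal{F}_{\prec,d}=\emptyset$ is immediate, since then $\delta_I(d)=\deg(S/I)$ while ${\rm fp}_I(d)\leq\deg(S/I)$ (each $\deg(S/({\rm in}_\prec(I),t^a))$ being positive). For the inequality $\delta_I(d)\geq0$ I would instead invoke the combined inequality $\deg(S/(I,f))\leq\deg(S/I)$ of Lemma~\ref{degree-initial-footprint}, so that every term subtracted from $\deg(S/I)$ is at most $\deg(S/I)$.

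Parts (b) and (c) are short applications of Lemma~\ref{degree-initial-footprint} with the remaining hypotheses. For (b), assuming ${\rm in}_\prec(I)$ is unmixed and $\mathcal{M}_{\prec,d}\neq\emptyset$, I would apply that lemma to the monomial ideal $J={\rm in}_\prec(I)$ and the monomial $f=t^a$ for each $t^a\in\mathcal{M}_{\prec,d}$: since $t^a$ is a zero-divisor of $S/J$ one has $(J\colon t^a)\neq J$, and the lemma gives $\deg(S/({\rm in}_\prec(I),t^a))\leq\deg(S/{\rm in}_\prec(I))=\deg(S/I)$, whence ${\rm fp}_I(d)\geq0$ (the case $\mathcal{M}_{\prec,d}=\emptyset$ being trivial). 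For (c), when $I$ is unmixed and radical and $\mathcal{F}_{\prec,d}\neq\emptyset$, the final clause of Lemma~\ref{degree-initial-footprint} gives the strict inequality $\deg(S/(I,f))<\deg(S/I)$ for each $f\in\mathcal{F}_{\prec,d}$ (which satisfies $f\notin I$), i.e.\ $\deg(S/(I,f))\leq\deg(S/I)-1$; subtracting from $\deg(S/I)$ yields $\delta_I(d)\geq1$, and the empty case again gives $\delta_I(d)=\deg(S/I)\geq1$. The only real care needed is the bookkeeping with the two empty-set conventions and checking at each invocation that the hypotheses of Lemma~\ref{degree-initial-footprint} hold, in particular that a zero-divisor $f$ satisfies $(I\colon f)\neq I$ and $f\notin I$; none of the steps presents a genuine obstacle.
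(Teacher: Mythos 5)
Your proposal is correct and follows essentially the same route as the paper: both arguments combine the two inequalities of Lemma~\ref{degree-initial-footprint} with the fact (from Lemma~\ref{regular-elt-in}) that ${\rm in}_\prec$ maps $\mathcal{F}_{\prec,d}$ into $\mathcal{M}_{\prec,d}$, treat the empty-set conventions separately, and invoke the radical clause of that lemma for part (c). Your explicit application of the lemma to $J={\rm in}_\prec(I)$ in part (b) and the use of $\deg(S/I)=\deg(S/{\rm in}_\prec(I))$ merely spell out what the paper leaves implicit.
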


\begin{proof} If $\mathcal{F}_{\prec,d}=\emptyset$, then
clearly $\delta_I(d)=\deg(S/I)\geq 1$, $\delta_I(d)\geq {\rm fp}_I(d)$, and if
${\rm in}_\prec(I)$ is unmixed, then ${\rm fp}_I(d)\geq 0$ (this
follows from Lemma~\ref{degree-initial-footprint}). Thus, (a), (b),
and (c) hold. Now assume that $\mathcal{F}_{\prec,d}\neq\emptyset$. Pick
a standard polynomial $f\in S_d$ such that 
$(I\colon f)\neq I$ and 
$$
\delta_I(d)=\deg(S/I)-\deg(S/(I,f)).
$$

As $I$ is unmixed, by
Lemma~\ref{degree-initial-footprint}, $\deg(S/(I,f))\leq 
\deg(S/({\rm in}_\prec(I),{\rm in}_\prec(f)))$. On the other hand, by
Lemma~\ref{regular-elt-in}, ${\rm in}_\prec(f)$ is a zero-divisor of
$S/{\rm in}_\prec(I)$. Hence $\delta_I(d)\geq {\rm fp}_I(d)$. Using the second
inequality of Lemma~\ref{degree-initial-footprint} it follows that 
$\delta_I(d)\geq 0$, ${\rm fp}_I(d)\geq 0$ if ${\rm in}_\prec(I)$ is
unmixed, and $\delta_I(d)\geq 1$ if $I$ is radical.
\end{proof}

\begin{proposition}\label{geil-carvalho-monomial} If $I$ is an unmixed monomial ideal and $\prec$ is
any monomial order, then $\delta_I(d)={\rm fp}_I(d)$ for $d\geq 1$,
that is, $I$ is a Geil--Carvalho ideal.
\end{proposition}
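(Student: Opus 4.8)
The plan is to prove the two inequalities $\delta_I(d)\ge {\rm fp}_I(d)$ and $\delta_I(d)\le {\rm fp}_I(d)$ separately. The first direction is free: since $I$ is unmixed, Lemma~\ref{dec30-15}(a) gives $\delta_I(d)\ge {\rm fp}_I(d)$ for all $d\ge 1$. So the real content is the reverse inequality, and the whole point is that for a monomial ideal the footprint function and the minimum distance function are computed against the \emph{same} ideal.

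First I would record that, because $I$ is a monomial ideal, one has ${\rm in}_\prec(I)=I$ for every monomial order $\prec$; hence $\Delta_\prec(I)$ is simply the set of monomials outside $I$, and both $\max\{\deg(S/({\rm in}_\prec(I),t^a))\colon t^a\in\mathcal{M}_{\prec,d}\}$ and $\max\{\deg(S/(I,f))\colon f\in\mathcal{F}_{\prec,d}\}$ are maxima of the single function $g\mapsto\deg(S/(I,g))$. The key structural input is the inclusion $\mathcal{M}_{\prec,d}\subseteq\mathcal{F}_{\prec,d}$ noted in the Remark after Lemma~\ref{regular-elt-in}: any $t^a\in\mathcal{M}_{\prec,d}$ is a standard monomial of degree $d$ that is a zero-divisor of $S/I$, hence, viewed as a polynomial, it is a nonzero $K$-linear combination of standard monomials of degree $d$ that is a zero-divisor of $S/I$, i.e. an element of $\mathcal{F}_{\prec,d}$.

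With this in hand the reverse inequality is immediate when $\mathcal{M}_{\prec,d}\ne\emptyset$: choosing $t^a\in\mathcal{M}_{\prec,d}$ that attains the footprint maximum, the inclusion gives $t^a\in\mathcal{F}_{\prec,d}$, so
$$
\delta_I(d)=\deg(S/I)-\max_{f\in\mathcal{F}_{\prec,d}}\deg(S/(I,f))\le \deg(S/I)-\deg(S/(I,t^a))={\rm fp}_I(d).
$$
Combined with Lemma~\ref{dec30-15}(a) this yields equality. The only point requiring care---and the one I regard as the main (minor) obstacle---is the bookkeeping of the empty cases, i.e. reconciling the two clauses in the definitions of ${\rm fp}_I$ and $\delta_I$. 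If $\mathcal{M}_{\prec,d}=\emptyset$, then ${\rm fp}_I(d)=\deg(S/I)$; since $\deg(S/(I,f))$ is a positive integer for every admissible $f$, one always has $\delta_I(d)\le\deg(S/I)$, and together with $\delta_I(d)\ge{\rm fp}_I(d)=\deg(S/I)$ from Lemma~\ref{dec30-15}(a) this forces $\delta_I(d)=\deg(S/I)={\rm fp}_I(d)$. Equivalently, one can check directly that for a monomial $I$ every term in the support of an $f\in\mathcal{F}_{\prec,d}$ lies in the associated monomial prime containing $f$ and is therefore a zero-divisor, so $\mathcal{M}_{\prec,d}=\emptyset$ already forces $\mathcal{F}_{\prec,d}=\emptyset$. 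In all cases $\delta_I(d)={\rm fp}_I(d)$ for $d\ge 1$, which is exactly the assertion that $I$ is Geil--Carvalho.
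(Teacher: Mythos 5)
Your proof is correct and takes essentially the same route as the paper: one inequality comes from Lemma~\ref{dec30-15}, and the reverse from the inclusion $\mathcal{M}_{\prec,d}\subseteq\mathcal{F}_{\prec,d}$, which holds precisely because $I={\rm in}_\prec(I)$ for a monomial ideal. The only cosmetic difference is in the empty cases: the paper deduces that $\mathcal{M}_{\prec,d}=\emptyset$ if and only if $\mathcal{F}_{\prec,d}=\emptyset$ from Lemma~\ref{regular-elt-in}, whereas your associated-prime argument (every term of $f\in\mathcal{F}_{\prec,d}$ lies in a monomial associated prime, hence is in $\mathcal{M}_{\prec,d}$) or your degree-bound argument reaches the same conclusion directly.
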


\begin{proof} The inequality $\delta_I(d)\geq {\rm fp}_I(d)$ follows
from Lemma~\ref{dec30-15}. To show the reverse inequality 
notice that $\mathcal{M}_{\prec, d}\subset
\mathcal{F}_{\prec, d}$ because one has $I={\rm in}_\prec(I)$. Also
notice that $\mathcal{M}_{\prec, d}=\emptyset$ if and only if 
$\mathcal{F}_{\prec, d}=\emptyset$, this follows from
Lemma~\ref{regular-elt-in}. Therefore one has 
${\rm fp}_I(d)\geq \delta_I(d)$.
\end{proof}

\begin{proposition}\label{jan2-16} 
Let $I\subset S$ be a graded ideal and let $\prec$ be a
monomial order. Suppose that ${\rm in}_\prec(I)$ is a complete intersection of
height $s-1$ generated by 
$t^{\alpha_2},\ldots,t^{\alpha_s}$, with $d_i=\deg(t^{\alpha_i})$ and 
$d_i\geq 1$ for all $i$. The following hold.
\begin{itemize}
\item[(a)] \cite[Example~1.5.1]{Migliore} $I$ is a complete
intersection and $\dim(S/I)=1$.
\item[(b)] {\rm(}\cite[Example~1.5.1]{Migliore},
\cite[Lemma~3.5]{Chardin}{\rm)} $\deg(S/I)=d_2\cdots d_s$ and 
${\rm reg}(S/I)=\sum_{i=2}^s(d_i-1)$. 
\item[(c)] $1\leq {\rm fp}_I(d)\leq \delta_I(d)$ for $d\geq 1$.
\end{itemize}
\end{proposition}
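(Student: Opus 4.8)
The plan is to dispose of parts (a) and (b) by the citations already appearing in the statement---part (a) together with the degree formula in (b) is \cite[Example~1.5.1]{Migliore}, while the regularity formula is \cite[Lemma~3.5]{Chardin}---and to concentrate the work on part (c). For (c), the upper bound ${\rm fp}_I(d)\leq\delta_I(d)$ is immediate: by part (a) the ideal $I$ is a complete intersection, hence unmixed, so Lemma~\ref{dec30-15}(a) applies and gives $\delta_I(d)\geq{\rm fp}_I(d)$ for every $d\geq 1$. It therefore remains only to prove the strict positivity ${\rm fp}_I(d)\geq 1$.

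Write $L={\rm in}_\prec(I)$ and recall from part (b) that $\deg(S/I)=d_2\cdots d_s$. If $\mathcal{M}_{\prec,d}=\emptyset$, then by the definition of the footprint function ${\rm fp}_I(d)=\deg(S/I)=d_2\cdots d_s\geq 1$ and there is nothing to prove. So assume $\mathcal{M}_{\prec,d}\neq\emptyset$. Again by the definition of ${\rm fp}_I$, it suffices to show that
$$
\deg(S/(L,t^a))\leq d_2\cdots d_s-1
$$
for every $t^a\in\mathcal{M}_{\prec,d}$, because then ${\rm fp}_I(d)=\deg(S/I)-\max\{\deg(S/(L,t^a))\colon t^a\in\mathcal{M}_{\prec,d}\}\geq d_2\cdots d_s-(d_2\cdots d_s-1)=1$.

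To obtain this uniform bound I would appeal to Lemma~\ref{ci-monomial-dim1}: since $L$ is a monomial complete intersection with $\dim(S/L)=1$, after permuting the variables $t_1,\ldots,t_s$ it has one of the two normal forms (i) or (ii) of that lemma. A permutation of the variables is a graded automorphism of $S$, so it alters neither $\deg(S/L)$ nor $\deg(S/(L,t^a))$, and I may apply the degree formulas already established to the permuted ideal and monomial. In form (i), Lemma~\ref{dec20-15} gives $\deg(S/(L,t^a))=d_2\cdots d_s-(d_2-a_2)\cdots(d_s-a_s)$; in form (ii), each of the three subcases of Lemma~\ref{dec28-15}(A) writes $\deg(S/(L,t^a))$ as $d_2\cdots d_s$ minus a product of the same shape. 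The crucial observation is that in every case each factor of the subtracted product is a positive integer: the membership condition $t^a\notin L$ forces the strict inequalities $a_i\leq d_i-1$, $a_p<c_p$, $a_{p+1}<c_{p+1}$ (as the case may be) recorded in the proofs of Lemmas~\ref{dec20-15} and \ref{dec28-15}, so that each factor $d_i-a_i$, $c_p-a_p$, $c_{p+1}-a_{p+1}$, or $d_{p+1}-a_p-a_{p+1}$ is at least $1$. Hence the subtracted product is at least $1$ and the displayed bound holds uniformly.

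The only genuine work, which I expect to be purely a matter of bookkeeping, lies in verifying subcase by subcase of Lemma~\ref{dec28-15}(A) that $t^a\notin L$ really does make every factor at least $1$; for instance in subcase (ii.2) one combines $a_p\leq c_p-1$ and $a_{p+1}\leq c_{p+1}-1$ with $d_{p+1}=c_p+c_{p+1}$ to obtain $d_{p+1}-a_p-a_{p+1}\geq 2$. No idea beyond the explicit degree formulas is required: the content of (c) is simply that those formulas never allow $\deg(S/(L,t^a))$ to attain the full multiplicity $d_2\cdots d_s$.
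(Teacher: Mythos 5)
Your proof is correct and follows essentially the same route as the paper: unmixedness of the complete intersection $I$ together with Lemma~\ref{dec30-15} gives ${\rm fp}_I(d)\leq\delta_I(d)$, and the classification of Lemma~\ref{ci-monomial-dim1} combined with the degree formulas of Lemmas~\ref{dec20-15} and \ref{dec28-15} shows $\deg(S/({\rm in}_\prec(I),t^a))<\deg(S/I)$ for every $t^a\in\mathcal{M}_{\prec,d}$, hence ${\rm fp}_I(d)\geq 1$. The only differences are cosmetic: the paper proves (a) and (b) directly (lifting the monomial generators of ${\rm in}_\prec(I)$ to a Gr\"obner basis of $I$ and invoking Stanley's Hilbert-series formula) rather than citing, and it leaves implicit the factor-by-factor positivity check that you spell out.
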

\begin{proof} (a): The rings $S/I$ and $S/{\rm init}_\prec(I)$ have the same
dimension. Thus $\dim(S/I)=1$. As $\prec$ is a graded order, there are 
$f_2,\ldots,f_s$ homogeneous polynomials in $I$ with 
${\rm in}_\prec(f_i)=t^{\alpha_i}$ for $i\geq 2$. Since 
${\rm in}_\prec(I)=({\rm in}_\prec(f_2),\ldots,{\rm in}_\prec(f_s))$,
the polynomials $f_2,\ldots,f_s$ form a Gr\"obner basis of $I$, and in
particular they generate $I$. Hence $I$ is a graded ideal of height $s-1$
generated by $s-1$ polynomials, that is, $I$ is a complete intersection.

(b): Since $I$ is a complete intersection generated by the $f_i$'s,
then the degree and regularity of $S/I$ are $\deg(f_2)\cdots\deg(f_s)$
and $\sum_{i=2}^s(\deg(f_i)-1)$, respectively. This follows from the 
formula for the Hilbert series of a complete intersection given in 
\cite[Corollary~3.3]{Sta1}. 

(c) The ideal $I$ is unmixed because, by part (a), $I$ is a complete
intersection; in particular Cohen--Macaulay and unmixed. 
Hence the inequality $\delta_I(d)\geq {\rm fp}_I(d)$
follows from Lemma~\ref{dec30-15}. Let $t^a$ be a standard monomial
of $S/I$ of degree $d$ such that $({\rm in}_\prec(I)\colon t^a)\neq {\rm
in}_\prec(I)$, that is, $t^a$ is in $\mathcal{M}_{\prec, d}$. 
Using Lemma~\ref{ci-monomial-dim1}, and the formulas for $\deg(S/({\rm
in}_\prec(I),t^a))$ given in Lemma~\ref{dec20-15} and
Lemma~\ref{dec28-15}, we obtain that 
$\deg(S/({\rm in}_\prec(I),t^a))<\deg(S/I)$. Thus ${\rm fp}_I(d)\geq 1$.
\end{proof}

\begin{proposition}{\cite[Proposition~5.7]{hilbert-min-dis}}\label{aug-28-15}
Let $1\leq e_1\leq\cdots\leq e_m$ and $0\leq b_i\leq e_i-1$
for $i=1,\ldots,m$ be integers. If $b_0\geq 1$, then 
\begin{equation}\label{aug-27-15-2}
\prod_{i=1}^m(e_i-b_i)\geq
\left(\sum_{i=1}^{k+1}(e_i-b_i)-(k-1)-b_0-\sum_{i=k+2}^m b_i\right)e_{k+2}\cdots e_m
\end{equation}
for $k=0,\ldots,m-1$, where $e_{k+2}\cdots e_m=1$ and
$\sum_{i=k+2}^mb_i=0$ if $k=m-1$.
\end{proposition}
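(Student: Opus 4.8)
The plan is to read the left-hand side minus the right-hand side as a function of the exponents and to exploit that it is affine in each variable. Writing $g_i=e_i-b_i$, set
\[
\Phi(b_0,b_1,\dots,b_m)=\prod_{i=1}^m(e_i-b_i)-\Big(\sum_{i=1}^{k+1}(e_i-b_i)-(k-1)-b_0-\sum_{i=k+2}^m b_i\Big)e_{k+2}\cdots e_m .
\]
The first observation is that $\Phi$ is affine in each variable separately: $\prod_{i=1}^m(e_i-b_i)$ is multilinear in $(b_1,\dots,b_m)$ and the subtracted term is affine in each $b_i$ and in $b_0$. Hence over the region $b_0\ge 1$, $0\le b_i\le e_i-1$, the minimum of $\Phi$ is attained at an extreme point. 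Since the coefficient of $b_0$ in $\Phi$ equals $e_{k+2}\cdots e_m>0$, that minimum has $b_0=1$, and in each remaining coordinate it has $b_i\in\{0,e_i-1\}$. So it suffices to prove $\Phi\ge 0$ at these vertices, i.e.\ when $b_0=1$ and each $g_i=e_i-b_i$ equals either $e_i$ or $1$.

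At such a vertex, let $A_1\subseteq\{1,\dots,k+1\}$ be the head indices with $g_i=e_i$ and $B_1$ the head indices with $g_i=1$, so $|A_1|+|B_1|=k+1$; let $B_2\subseteq\{k+2,\dots,m\}$ be the tail indices with $g_i=1$ (that is, $b_i=e_i-1$). The head indices in $B_1$ only shift the additive factor, and the tail indices with $g_i=e_i$ appear on both sides and cancel. A direct computation then reduces the vertex inequality to
\[
\prod_{i\in A_1}e_i\ \ge\ \Big(1+\sum_{i\in A_1}(e_i-1)-\sum_{i\in B_2}(e_i-1)\Big)\prod_{i\in B_2}e_i .
\]
Writing the $e_i$ with $i\in A_1$ as $u_1,\dots,u_a$ and those with $i\in B_2$ as $v_1,\dots,v_c$, sortedness gives $v_l\ge u_j$ for all $j,l$ (every tail index exceeds every head index), and the claim becomes $\prod_j u_j\ge(1+W-V)\prod_l v_l$ with $W=\sum_j(u_j-1)$ and $V=\sum_l(v_l-1)$.

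I would prove this last inequality by induction on $a$, the base case $a=0$ being immediate. If $1+W-V\le 0$ it is trivial, since the left-hand side is positive. Otherwise let $u_a=\max_j u_j$ and $\alpha=u_a-1$. If $W-V\ge\alpha$, peel $u_a$ alone: the hypothesis applied to $u_1,\dots,u_{a-1}$ gives $\prod_{j<a}u_j\ge\big(1+(W-\alpha)-V\big)\prod_l v_l$, and multiplying by $u_a=1+\alpha$ leaves the remainder $\alpha\big((W-V)-\alpha\big)\ge 0$. If instead $W-V<\alpha$, then necessarily $c\ge 1$; peel $u_a$ together with any $v_c$, noting $v_c\ge u_a$, so $\beta:=v_c-1\ge\alpha$. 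The hypothesis applied to the shortened lists now leaves the remainder $(\beta-\alpha)\big(\alpha-(W-V)\big)\ge 0$. In either case the inequality follows, which establishes the vertex inequality and hence the proposition.

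The main obstacle is this final inequality. Every naive induction that peels a single value and multiplies the induction hypothesis collapses in the equality configurations (where all the $e_i$ coincide), because there the hypothesis becomes too weak to reabsorb the factor one multiplies in. The decisive move is to peel the \emph{largest} head value and to let the sign of $W-V-\alpha$ decide whether one simultaneously removes a tail value; this sign-dependent choice is exactly what forces the leftover remainder to be nonnegative. The reduction to vertices via multilinearity is what makes such clean bookkeeping possible, by first pushing all the interior exponents $b_i$ to the boundary.
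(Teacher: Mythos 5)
Your proof is correct. A preliminary remark: this paper never proves Proposition~\ref{aug-28-15} itself---it is imported, with citation, from \cite[Proposition~5.7]{hilbert-min-dis} and used as a black box in the proof of Theorem~\ref{footprint-ci}---so the only comparison available is with the external source, and your argument stands as a legitimate self-contained proof whether or not it coincides with the one cited. I checked the three load-bearing steps. The vertex reduction is sound: $\Phi$ is affine in each variable, its $b_0$-coefficient is $e_{k+2}\cdots e_m>0$, so the infimum over $b_0\ge 1$ and the compact box $\prod_i[0,e_i-1]$ is attained at $b_0=1$ and $b_i\in\{0,e_i-1\}$; note this proves the statement for \emph{real} $b_i$, slightly more than claimed. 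The vertex bookkeeping is right: using $|A_1|+|B_1|=k+1$, the coefficient collapses to $1+\sum_{i\in A_1}(e_i-1)-\sum_{i\in B_2}(e_i-1)$ and the factors $e_i$, $i\in A_2$, cancel, giving exactly your displayed inequality. Finally the peeling induction closes: in the first case the slack is $\alpha\bigl((W-V)-\alpha\bigr)\ge 0$; in the second, $c\ge 1$ is indeed forced (else $W-V=W\ge u_a-1=\alpha$), and the slack $(\beta-\alpha)\bigl(\alpha-(W-V)\bigr)\ge 0$ uses $\beta\ge\alpha$, which is precisely---and only---where the hypothesis $e_1\le\cdots\le e_m$ enters. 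That isolation of the sortedness hypothesis, and the validity over the reals, are genuine bonuses of your route.
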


We come to the main result of this paper.

\begin{theorem}\label{footprint-ci}
Let $I\subset S$ be a graded ideal and let $\prec$ be a 
graded monomial order. If the initial ideal 
${\rm in}_\prec(I)$ is a complete intersection of
height $s-1$ generated by 
$t^{\alpha_2},\ldots,t^{\alpha_s}$, with $d_i=\deg(t^{\alpha_i})$ and 
$1\leq d_i\leq d_{i+1}$ for $i\geq 2$, then $\delta_I(d)\geq {\rm
fp}_I(d)\geq 1$ for $d\geq 1$ and   
$$
{\rm fp}_I(d)=\left\{\begin{array}{ll}(d_{k+2}-\ell)d_{k+3}\cdots d_s
&\mbox{if }\ 
d\leq \sum\limits_{i=2}^{s}\left(d_i-1\right)-1,\\ 
1 &\mbox{if\ }\ d\geq \sum\limits_{i=2}^{s}\left(d_i-1\right),
\end{array}\right.
$$
where $0\leq k\leq s-2$ and $\ell$ are integers such that 
$d=\sum_{i=2}^{k+1}\left(d_i-1\right)+\ell$ and $1\leq \ell \leq
d_{k+2}-1$. 
\end{theorem}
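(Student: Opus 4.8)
The plan is to synthesize the theorem from the structural and computational results already in place, reducing the whole statement to one uniform estimate. First I would apply Proposition~\ref{jan2-16}: because ${\rm in}_\prec(I)$ is a complete intersection of height $s-1$, so is $I$, with $\dim(S/I)=1$, $\deg(S/I)=d_2\cdots d_s$, and ${\rm reg}(S/I)=\sum_{i=2}^s(d_i-1)$; moreover $1\le{\rm fp}_I(d)\le\delta_I(d)$ for every $d\ge1$. This at once gives the asserted inequality $\delta_I(d)\ge{\rm fp}_I(d)\ge1$, so only the closed formula for ${\rm fp}_I(d)$ needs proof. By Lemma~\ref{ci-monomial-dim1}, after permuting the variables $L={\rm in}_\prec(I)$ has one of two shapes: (i) $L=(t_2^{d_2},\dots,t_s^{d_s})$, with $t_1$ free, or (ii) the shape carrying a single mixed generator $t_p^{c_p}t_{p+1}^{c_{p+1}}$ with $d_{p+1}=c_p+c_{p+1}$. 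I would carry both shapes through in parallel, using throughout the identity ${\rm fp}_I(d)=\min\{\deg(S/I)-\deg(S/(L,t^a)):t^a\in\mathcal M_{\prec,d}\}$.

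For the regime $d\ge\sum_{i=2}^s(d_i-1)={\rm reg}(S/I)$ the value should be $1$: the lower bound ${\rm fp}_I(d)\ge1$ is Proposition~\ref{jan2-16}(c), and for the matching upper bound I would exhibit a standard zero-divisor of degree $d$ whose degree drop is exactly $1$---in shape (i) the monomial $t_1^{d-{\rm reg}}t_2^{d_2-1}\cdots t_s^{d_s-1}$, and in shape (ii) the monomial with $a_{p+1}=c_{p+1}-1$ and $a_p=d-{\rm reg}+c_p\ge c_p$---so that every factor in the relevant degree formula of Lemma~\ref{dec20-15} or Lemma~\ref{dec28-15}(A) equals $1$. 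For $d\le\sum_{i=2}^s(d_i-1)-1$ the unique integers $k,\ell$ with $d=\sum_{i=2}^{k+1}(d_i-1)+\ell$ and $1\le\ell\le d_{k+2}-1$ exist, and the upper bound ${\rm fp}_I(d)\le(d_{k+2}-\ell)d_{k+3}\cdots d_s$ is supplied by the explicit witness monomial of Lemma~\ref{dec28-15}(B) (in shape (i) simply by $t_2^{d_2-1}\cdots t_{k+1}^{d_{k+1}-1}t_{k+2}^{\ell}$, via Lemma~\ref{dec20-15}).

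The crux is the matching lower bound $\deg(S/I)-\deg(S/(L,t^a))\ge(d_{k+2}-\ell)d_{k+3}\cdots d_s$ for every $t^a\in\mathcal M_{\prec,d}$, which I would derive uniformly from Proposition~\ref{aug-28-15}. In shape (i), Lemma~\ref{dec20-15} gives the degree drop as $\prod_{i=2}^s(d_i-a_i)$, and I would apply Proposition~\ref{aug-28-15} with $m=s-1$, $e_i=d_{i+1}$, its $b_i=a_{i+1}$ (the standardness of $t^a$ guaranteeing $0\le a_{i+1}\le d_{i+1}-1$), its index equal to the given $k$, and $b_0=a_1+1\ge1$; a short computation using $\sum_{i=2}^s a_i=d-a_1$ together with $d=\sum_{i=2}^{k+1}(d_i-1)+\ell$ collapses the right-hand bracket to exactly $d_{k+2}-\ell$. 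In shape (ii) the degree drop given by Lemma~\ref{dec28-15}(A) is $\prod_{i=1}^{p-1}(d_{i+1}-a_i)\prod_{i=p+2}^s(d_i-a_i)$ times the mixed factor $c_{p+1}-a_{p+1}$, $c_p-a_p$, or $d_{p+1}-a_p-a_{p+1}$ according to the three subcases. The key device is to fold the mixed pair into a single entry $\widetilde a_{p+1}$ at the slot of $d_{p+1}$ in the ordered sequence $d_2\le\cdots\le d_s$: set $\widetilde a_{p+1}=c_p+a_{p+1}$, $c_{p+1}+a_p$, or $a_p+a_{p+1}$ respectively, so that $d_{p+1}-\widetilde a_{p+1}$ reproduces the mixed factor, with $\widetilde a_j=a_{j-1}$ for $2\le j\le p$ and $\widetilde a_j=a_j$ for $j\ge p+2$. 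One checks in each subcase that $0\le\widetilde a_{p+1}\le d_{p+1}-1$ and that $\sum_{j=2}^s\widetilde a_j=d-\theta$ for a nonnegative slack $\theta$ (equal to $a_p-c_p$, $a_{p+1}-c_{p+1}$, and $0$ in the three subcases); applying Proposition~\ref{aug-28-15} with $b_0=\theta+1$ then collapses the bracket to $d_{k+2}-\ell$ exactly as in shape (i), the slack cancelling identically. This yields the lower bound, and together with the upper bound completes the formula. I expect this fold to be the main obstacle: each of the three degree formulas of Lemma~\ref{dec28-15}(A) must be recast as a single product $\prod_{j=2}^s(d_j-\widetilde a_j)$ with a legitimate entry $0\le\widetilde a_{p+1}\le d_{p+1}-1$, and the auxiliary slack must be routed into $b_0=\theta+1$ so that the cancellation producing the clean bracket $d_{k+2}-\ell$ survives in all three subcases---a mismatched fold would either break the constraint of Proposition~\ref{aug-28-15} or spoil that cancellation.
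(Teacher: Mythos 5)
Your proposal is correct and follows essentially the same route as the paper's own proof: reduce to the two shapes of Lemma~\ref{ci-monomial-dim1}, handle $d\geq\sum_{i=2}^s(d_i-1)$ with the same witness monomials of degree drop $1$, get the upper bound from the same explicit standard monomials (and Lemma~\ref{dec28-15}(B)), and prove the lower bound via Proposition~\ref{aug-28-15}. In particular, your ``fold'' $\widetilde a_{p+1}=c_p+a_{p+1}$, $c_{p+1}+a_p$, or $a_p+a_{p+1}$ with $b_0=\theta+1$ is exactly the paper's choice of $b_p$ and $b_0$ in its three subcases (ii.1)--(ii.3), so the two arguments coincide step for step.
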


\begin{proof} Let $t^a$ be any standard monomial of $S/I$ of degree $d$ which is a zero-divisor 
of $S/{\rm in}_\prec(I)$, that is, $t^a$ is in
$\mathcal{M}_{\prec,d}$. Thus $d=\sum_{i=1}^sa_i$, where
$a=(a_1,\ldots,a_s)$. We set 
$r=\sum_{i=2}^s(d_i-1)$. 
If we substitute $-\ell=\sum_{i=2}^{k+1}(d_i-1)-\sum_{i=1}^sa_i$ in 
the expression $(d_{k+2}-\ell)d_{k+3}\cdots d_s$, it follows that for $d<r$ the inequality 
$$
{\rm fp}_I(d)\geq (d_{k+2}-\ell)d_{k+3}\cdots d_s
$$
is equivalent to show that 
\begin{equation}\label{dec28-15-1}
\deg(S/I)- \deg(S/({\rm
in}_\prec(I),t^a))\geq\left(\sum_{i=2}^{k+2}(d_i-a_i)-k-a_1-\sum_{i=k+3}^sa_i\right)d_{k+3}\cdots d_s
\end{equation}
for any $t^a$ in
$\mathcal{M}_{\prec,d}$, where by convention $\sum_{i=k+3}^sa_i=0$ and $d_{k+3}\cdots
d_s=1$ if $k=s-2$. Recall that, by Proposition~\ref{jan2-16}, one has
that ${\rm fp}_I(d)\geq 1$ for $d\geq 1$. By Lemma~\ref{ci-monomial-dim1}, and by permuting
variables and changing $I$, $\prec$, and $t^a$ accordingly, one has the
following two cases to consider.

Case (i): Assume that ${\rm
in}_\prec(I)=(t_2^{d_2},\ldots,t_s^{d_s})$ with $1\leq d_i\leq
d_{i+1}$ for $i\geq 2$. Then, as $t^a$ is in $\mathcal{M}_{\prec,
d}$, 
we can write $t^a=t_1^{a_1}\cdots t_r^{a_r}\cdots
t_s^{a_s}$, $r\geq 2$, $a_r\geq 1$, $a_i=0$ if $2\leq i<r$, and
$a_i\leq d_i-1$ for $i\geq r$. By Lemma~\ref{dec20-15} we get
\begin{equation}\label{dec20-15-1}
\deg(S/({\rm in}_\prec(I),t^a)) = d_2\cdots d_s-
(d_2-a_2)\cdots(d_s-a_s)
\end{equation}
for any $t^a$ in $\mathcal{M}_{\prec,d}$. If $d\geq r$, setting
$t^c=t_1^{d-r}t_2^{d_2-1}\cdots t_s^{d_s-1}$, one has
$t^c\in\mathcal{M}_{\prec,d}$. Then, using Eq.~(\ref{dec20-15-1}), it follows that 
  $\deg(S/({\rm in}_\prec(I),t^c))=d_2\cdots d_s-1$. Thus 
${\rm  fp}_I(d)\leq 1$ and equality ${\rm fp}_I(d)=1$ holds. We
may now assume $d\leq r-1$. Setting $t^b=t_2^{d_2-1}\cdots
t_{k+1}^{d_{k+1}-1}t_{k+2}^\ell$, one has
$t^b\in\mathcal{M}_{\prec,d}$. Then, using Eq.~(\ref{dec20-15-1}), we
get 
$$
\deg(S/({\rm in}_\prec(I),t^b))=d_2\cdots
d_s-(d_{k+2}-\ell)d_{k+3}\cdots d_s.
$$ 

Hence ${\rm fp}_I(d)\leq (d_{k+2}-\ell)d_{k+3}\cdots d_s$. 
Next we show the reverse inequality by showing that
the inequality of Eq.~(\ref{dec28-15-1}) holds for any
$t^a\in\mathcal{M}_{\prec,d}$. By Eq.~(\ref{dec20-15-1}) it suffices to 
show that the following equivalent inequality holds
$$
(d_2-a_2)\cdots(d_s-a_s)\geq\left(\sum_{i=2}^{k+2}(d_i-a_i)-k-a_1
-\sum_{i=k+3}^sa_i\right)d_{k+3}\cdots d_s
$$
for any $a=(a_1,\ldots,a_s)$ such that $t^a\in\mathcal{M}_{\prec,d}$.
This inequality follows from 
Proposition~\ref{aug-28-15} by making $m=s-1$, $e_i=d_{i+1}$, $b_i=a_{i+1}$ 
for $i=1,\ldots,s-1$ and $b_0=1+a_1$.

Case (ii): Assume that 
${\rm in}_\prec(I)=(t_1^{d_2},\ldots,t_{p-1}^{d_p},t_p^{c_p}t_{p+1}^{c_{p+1}},t_{p+2}^{d_{p+2}},\ldots
,t_s^{d_s})$ for some $p\geq 1$ such that $1\leq c_p\leq
c_{p+1}$ and $1\leq d_i\leq d_{i+1}$ for all $i$, where
$d_{p+1}=c_p+c_{p+1}$. 

If $d\geq r$, setting
$t^c=t_1^{d_2-1}\cdots
t_{p-1}^{d_p-1}t_p^{d-r+c_p}t_{p+1}^{c_{p+1}-1}t_{p+2}^{d_{p+2}-1}\cdots t_s^{d_s-1}$, we get that
$t^c\in\mathcal{M}_{\prec,d}$. Then, using the first formula of
Lemma~\ref{dec28-15}, it follows that $\deg(S/({\rm
in}_\prec(I),t^c))=d_2\cdots d_s-1$. Thus ${\rm fp}_I(d)\leq 1$ and
the equality ${\rm fp}_I(d)=1$ holds. 

We may now assume $d\leq r-1$. The inequality 
${\rm fp}_I(d)\leq (d_{k+2}-\ell)d_{k+3}\cdots d_s$ follows from
Lemma~\ref{dec28-15}(B). To show that 
${\rm fp}_I(d)\geq (d_{k+2}-\ell)d_{k+3}\cdots d_s$ we need only show that
the inequality of Eq.~(\ref{dec28-15-1}) holds for any $t^a$ in
$\mathcal{M}_{\prec, d}$. Take $t^a$ in 
$\mathcal{M}_{\prec, d}$. Then we can write 
$t^a=t_1^{a_1}\cdots t_s^{a_s}$ with $a_i<d_{i+1}$ for $i<p$ and 
$a_i<d_i$ for $i>p+1$. There are three subcases to consider.

Subcase (ii.1): Assume $a_p\geq c_p$. Then $c_{p+1}>a_{p+1}$ because
$t^a$ is a standard monomial of $S/I$, and by Lemma~\ref{dec28-15} we get
\begin{equation*}
\deg(S/({\rm in}_\prec(I),t^a))=
d_2\cdots d_s- (c_{p+1}-a_{p+1})\displaystyle
\prod_{i=1}^{p-1}(d_{i+1}-a_i)\prod_{i=p+2}^s(d_i-a_i).
\end{equation*}
Therefore the inequality of Eq.~(\ref{dec28-15-1}) is equivalent to
\begin{eqnarray*}
& &(c_{p+1}-a_{p+1})\displaystyle
\prod_{i=1}^{p-1}(d_{i+1}-a_i)\prod_{i=p+2}^s(d_i-a_i)\\ 
& & \ \ \ \ \ \ \ \ \ \ \ \ \ \ 
\geq\left(\sum_{i=2}^{k+2}(d_i-a_i)-k-a_1-\sum_{i=k+3}^sa_i\right)d_{k+3}\cdots
d_s,
\end{eqnarray*}
and this inequality follows at once from
Proposition~\ref{aug-28-15} by making $m=s-1$, $e_i=d_{i+1}$ for
$i=1,\ldots,m$, $b_i=a_i$ for $1\leq i\leq p-1$, $b_p=a_{p+1}+c_p$,  
$b_i=a_{i+1}$ for $p<i\leq m$, and $b_0=a_p-c_p+1$. Notice 
that $\sum_{i=0}^mb_i=1+\sum_{i=1}^sa_i$. 

Subcase (ii.2): Assume $a_p<c_p$, $a_{p+1}\geq c_{p+1}$. By Lemma~\ref{dec28-15} we get
\begin{equation*}
\deg(S/({\rm in}_\prec(I),t^a))=
d_2\cdots d_s- (c_p-a_p)\displaystyle
\prod_{i=1}^{p-1}(d_{i+1}-a_i)\prod_{i=p+2}^s(d_i-a_i).
\end{equation*}
Therefore the inequality of Eq.~(\ref{dec28-15-1}) is equivalent to 
\begin{eqnarray*}
& &(c_p-a_p)\displaystyle
\prod_{i=1}^{p-1}(d_{i+1}-a_i)\prod_{i=p+2}^s(d_i-a_i)\\ 
& & \ \ \ \ \ \ \ \ \ \ \ \ \ \ 
\geq\left(\sum_{i=2}^{k+2}(d_i-a_i)-k-a_1-\sum_{i=k+3}^sa_i\right)d_{k+3}\cdots
d_s,
\end{eqnarray*}
and this inequality follows from
Proposition~\ref{aug-28-15} by making $m=s-1$, $e_i=d_{i+1}$ for
$i=1,\ldots,m$, $b_i=a_i$ for $1\leq i\leq p-1$, $b_p=c_{p+1}+a_p$,  
$b_i=a_{i+1}$ for $p<i\leq m$, and $b_0=a_{p+1}-c_{p+1}+1$. Notice 
that $\sum_{i=0}^mb_i=1+\sum_{i=1}^sa_i$. 

Subcase (ii.3): Assume $a_p<c_p$, $a_{p+1}\leq c_{p+1}-1$. 
By Lemma~\ref{dec28-15} we get
\begin{equation*}
\deg(S/({\rm in}_\prec(I),t^a))=
d_2\cdots d_s- (d_{p+1}-a_p-a_{p+1})\displaystyle
\prod_{i=1}^{p-1}(d_{i+1}-a_i)\prod_{i=p+2}^s(d_i-a_i).
\end{equation*}
Therefore the inequality of Eq.~(\ref{dec28-15-1}) is equivalent to 
\begin{eqnarray*}
& &(d_{p+1}-a_p-a_{p+1})\displaystyle
\prod_{i=1}^{p-1}(d_{i+1}-a_i)\prod_{i=p+2}^s(d_i-a_i)\\ 
& & \ \ \ \ \ \ \ \ \ \ \ \ \ \ 
\geq\left(\sum_{i=2}^{k+2}(d_i-a_i)-k-a_1-\sum_{i=k+3}^sa_i\right)d_{k+3}\cdots
d_s,
\end{eqnarray*}
and this inequality follows from
Proposition~\ref{aug-28-15} by making $m=s-1$, $e_i=d_{i+1}$ for
$i=1,\ldots,m$, $b_i=a_i$ for $1\leq i\leq p-1$, $b_p=a_{p}+a_{p+1}$,  
$b_i=a_{i+1}$ for $p<i\leq m$, and $b_0=1$. Notice 
that in this case $\sum_{i=0}^mb_i=1+\sum_{i=1}^sa_i$. 
\end{proof} 

\section{Applications and examples}
This section is devoted to give some applications and examples of our
main result. As the two most important applications to algebraic
coding theory, we recover the formula
for the minimum distance of an affine cartesian code 
\cite{geil-thomsen,cartesian-codes}, and the fact that the
homogenization of the corresponding vanishing ideal is a 
Geil--Carvalho ideal \cite{carvalho}.

We begin with a basic application for
complete intersections in $\mathbb{P}^1$.

\begin{corollary}\label{p1} If $\mathbb{X}$ is a finite subset of $\mathbb{P}^1$
and $I(\mathbb{X})$ is a complete intersection,
then 
$$
\delta_{I(\mathbb{X})}(d)={\rm fp}_{I(\mathbb{X})}(d)=
\left\{\begin{array}{cl} |\mathbb{X}|-d &\mbox{if }1\leq d\leq
|\mathbb{X}|-2,\\
1 &\mbox{if }d\geq |\mathbb{X}|-1.
\end{array}\right.
$$ 
\end{corollary}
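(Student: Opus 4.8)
The plan is to specialize the main result, Theorem~\ref{footprint-ci}, to the case $s=2$, since $\mathbb{X}\subset\mathbb{P}^1$ means the polynomial ring has exactly two variables. The first step is to observe that since $I(\mathbb{X})$ is a complete intersection in $S=K[t_1,t_2]$ of dimension $1$, its height is $s-1=1$, so it is generated by a single homogeneous polynomial, say of degree $d_2$. By Proposition~\ref{jan4-15}(ii) we have $|\mathbb{X}|=\deg(S/I(\mathbb{X}))$, and by the complete intersection formula (as recorded in Proposition~\ref{jan2-16}(b) or \cite[Corollary~3.3]{Sta1}) one has $\deg(S/I(\mathbb{X}))=d_2$. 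Hence $d_2=|\mathbb{X}|$, which is the key numerical identification that lets us rewrite the formula of Theorem~\ref{footprint-ci} purely in terms of $|\mathbb{X}|$.

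\textbf{Applying the main theorem.} To invoke Theorem~\ref{footprint-ci} directly I would need the \emph{initial} ideal $\mathrm{in}_\prec(I(\mathbb{X}))$ to be a complete intersection, not merely $I(\mathbb{X})$ itself. In two variables this is automatic: the initial ideal of a principal ideal generated by a homogeneous polynomial $f$ of degree $d_2$ is $(\mathrm{in}_\prec(f))$, a principal monomial ideal of height $1$, hence a complete intersection generated by a single monomial $t^{\alpha_2}$ of degree $d_2$. So with $s=2$ the hypotheses of Theorem~\ref{footprint-ci} are satisfied. Now I specialize the piecewise formula: the threshold is $\sum_{i=2}^s(d_i-1)=d_2-1=|\mathbb{X}|-1$, and when $d\le |\mathbb{X}|-2$ the only admissible choice of the index $k$ is $k=0$ (since $0\le k\le s-2=0$), forcing $\ell=d$ via the relation $d=\sum_{i=2}^{k+1}(d_i-1)+\ell=\ell$. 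Substituting $k=0$, $\ell=d$ into $(d_{k+2}-\ell)d_{k+3}\cdots d_s$ gives $(d_2-\ell)=d_2-d=|\mathbb{X}|-d$, since the empty product $d_{k+3}\cdots d_s=d_3\cdots d_2$ equals $1$. This yields exactly the claimed formula for $\mathrm{fp}_{I(\mathbb{X})}(d)$ in both ranges.

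\textbf{The equality $\delta=\mathrm{fp}$.} Theorem~\ref{footprint-ci} only gives the inequality $\delta_{I(\mathbb{X})}(d)\ge \mathrm{fp}_{I(\mathbb{X})}(d)$, so the remaining task is the reverse inequality, which is where I expect the one genuine point to verify. The cleanest route is to produce, for each $d$ in the range $1\le d\le|\mathbb{X}|-2$, an explicit homogeneous polynomial $f\in S_d$ of the evaluation code achieving the footprint value, equivalently a zero-divisor realizing the maximal degree $\deg(S/(I(\mathbb{X}),f))=|\mathbb{X}|-(|\mathbb{X}|-d)=d$; concretely one takes a product of $d$ distinct linear forms vanishing at $d$ of the points of $\mathbb{X}$, so that $|V_\mathbb{X}(f)|=d$ and by Eq.~(\ref{min-dis-vi-coro}) together with Theorem~\ref{min-dis-vi} we get $\delta_{I(\mathbb{X})}(d)\le |\mathbb{X}|-d$. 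For $d\ge|\mathbb{X}|-1$ one has $d\ge\mathrm{reg}(S/I(\mathbb{X}))$ so $\delta_{I(\mathbb{X})}(d)=1$ by Proposition~\ref{jan4-15}(iv). Combining both inequalities gives $\delta_{I(\mathbb{X})}(d)=\mathrm{fp}_{I(\mathbb{X})}(d)$ throughout.

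\textbf{The main obstacle.} The genuine content is the existence of $d$ distinct points in $\mathbb{X}$ cut out by a degree-$d$ form when $d\le|\mathbb{X}|-2$; this needs $|\mathbb{X}|\ge d+2\ge 3$, and I must make sure the chosen linear forms do not vanish at all of $\mathbb{X}$ (guaranteed since $d<|\mathbb{X}|$) so that $f\not\equiv 0$. In $\mathbb{P}^1$ this is straightforward because points and hyperplanes coincide, so the construction is immediate; the verification that the resulting $f$ is a standard zero-divisor achieving the bound is routine given Lemma~\ref{degree-formula-for-the-number-of-zeros-proj}. Thus the whole corollary reduces to the numerical specialization $s=2$, $d_2=|\mathbb{X}|$ of the main theorem plus this elementary point-counting construction.
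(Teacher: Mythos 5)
Your proposal is correct and follows essentially the same route as the paper's proof: both identify $d_2=|\mathbb{X}|$ and ${\rm reg}(S/I(\mathbb{X}))=|\mathbb{X}|-1$, apply Theorem~\ref{footprint-ci} (with the observation that the initial ideal of the principal ideal $I(\mathbb{X})$ is again principal, hence a complete intersection) to get the footprint formula and the lower bound, and obtain the matching upper bound by evaluating a product of $d$ linear forms vanishing at $d$ distinct points of $\mathbb{X}$, via Theorem~\ref{min-dis-vi} and Eq.~(\ref{min-dis-vi-coro}). The only cosmetic difference is that the paper invokes Lemma~\ref{primdec-ix-a} to produce the linear forms, while you take them directly, which is harmless in $\mathbb{P}^1$.
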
 

\begin{proof} Let $f$ be the generator of $I(\mathbb{X})$. In this
case $d_2=\deg(f)=|\mathbb{X}|$ and 
${\rm reg}(S/I(\mathbb{X}))=|\mathbb{X}|-1$. By Proposition~\ref{jan4-15} and
Theorem~\ref{footprint-ci} one has 
$$
\delta_\mathbb{X}(d)=\delta_{I(\mathbb{X})}(d)\geq
{\rm fp}_{I(\mathbb{X})}(d)=|\mathbb{X}|-d\ \mbox{ for }\ 1\leq d\leq
|\mathbb{X}|-2,
$$
and $\delta_\mathbb{X}(d)=1$ for $d\geq |\mathbb{X}|-1$. Assume that
$1\leq d\leq |\mathbb{X}|-2$. Pick $[P_1],\ldots,[P_d]$ points in
$\mathbb{P}^1$. By Lemma~\ref{primdec-ix-a}, the vanishing ideal
$I_{[P_i]}$ of $[P_i]$ is a principal ideal generated by a linear form
$h_i$. Notice that $V_\mathbb{X}(h_i)$, the zero-set of $h_i$ in
$\mathbb{X}$, is equal to $\{[P_i]\}$. Setting $h=h_1\cdots h_d$, we
get a homogeneous polynomial of degree $d$ with exactly $d$ zeros.
Thus $\delta_\mathbb{X}(d)\leq|\mathbb{X}|-d$.
\end{proof}

As another application we get the following uniform upper bound for 
the number of zeroes of all polynomials $f\in S_d$ that do not 
vanish at all points of $\mathbb{X}$.

\begin{corollary}\label{uniform-bound-for-zeros}
Let $\mathbb{X}$ be a finite subset of $\mathbb{P}^{s-1}$, let 
$I(\mathbb{X})$ be its vanishing
ideal, and let $\prec$ be a monomial order. If\, the initial ideal 
${\rm in}_\prec(I(\mathbb{X}))$ is a complete
intersection generated by 
$t^{\alpha_2},\ldots,t^{\alpha_s}$, with $d_i=\deg(t^{\alpha_i})$ and 
$1\leq d_i\leq d_{i+1}$ for $i\geq 2$, then
\begin{equation}\label{eq-unif-b}
|V_\mathbb{X}(f)|\leq \deg(S/I(\mathbb{X}))-\left(d_{k+2}-\ell\right)d_{k+3}\cdots d_s,
\end{equation}
for any $f\in S_d$ that does not vanish at all point of $\mathbb{X}$,
where $0\leq k\leq s-2$ and $\ell$ are integers such that 
$d=\sum_{i=2}^{k+1}\left(d_i-1\right)+\ell$ and $1\leq \ell \leq
d_{k+2}-1$. 
\end{corollary}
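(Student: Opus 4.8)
The plan is to obtain this bound as an immediate consequence of the main theorem (Theorem~\ref{footprint-ci}) combined with the coding-theoretic reading of the minimum distance function recorded in Eq.~(\ref{min-dis-vi-coro}). The governing idea is that, for a vanishing ideal, $\delta_{I(\mathbb{X})}(d)$ is exactly the deficiency between $\deg(S/I(\mathbb{X}))$ and the largest number of zeros attainable by a nonzero degree-$d$ form; therefore any lower bound on $\delta_{I(\mathbb{X})}(d)$ converts directly into a uniform upper bound on $|V_\mathbb{X}(f)|$.

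First I would verify that Theorem~\ref{footprint-ci} applies to $I=I(\mathbb{X})$. Since $\mathbb{X}$ is a finite set of projective points, $\dim(S/I(\mathbb{X}))=1$, so ${\rm ht}(I(\mathbb{X}))=s-1$; passing to the initial ideal preserves dimension, hence ${\rm in}_\prec(I(\mathbb{X}))$ also has height $s-1$ and, being by hypothesis a complete intersection on the $s-1$ monomials $t^{\alpha_2},\ldots,t^{\alpha_s}$, satisfies the hypotheses of the theorem. Theorem~\ref{footprint-ci} then yields
$$
\delta_{I(\mathbb{X})}(d)\ \geq\ {\rm fp}_{I(\mathbb{X})}(d)\ =\ (d_{k+2}-\ell)\,d_{k+3}\cdots d_s
$$
for the stated integers $k,\ell$. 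Here the constraint $1\le \ell\le d_{k+2}-1$ restricts $d$ to the range $1\le d\le \sum_{i=2}^s(d_i-1)$, and at the endpoint $d=\sum_{i=2}^s(d_i-1)$ (where $k=s-2$, $\ell=d_s-1$) the first branch of the footprint formula already collapses to $1$, matching the second branch; so the displayed equality holds throughout the admissible range.

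Next I would invoke Theorem~\ref{min-dis-vi} together with Eq.~(\ref{min-dis-vi-coro}). Admissibility of the parametrization forces some $d_{k+2}\ge 2$, hence $\deg(S/I(\mathbb{X}))=d_2\cdots d_s\ge 2$, so $|\mathbb{X}|\ge 2$ by Proposition~\ref{jan4-15}(ii) and Theorem~\ref{min-dis-vi} is available. Rearranging Eq.~(\ref{min-dis-vi-coro}) gives
$$
\max\{|V_\mathbb{X}(f)|\colon f\not\equiv 0,\ f\in S_d\}\ =\ \deg(S/I(\mathbb{X}))-\delta_{I(\mathbb{X})}(d).
$$
Chaining this with the footprint bound, for every $f\in S_d$ that does not vanish on all of $\mathbb{X}$ one obtains
$$
|V_\mathbb{X}(f)|\ \le\ \deg(S/I(\mathbb{X}))-\delta_{I(\mathbb{X})}(d)\ \le\ \deg(S/I(\mathbb{X}))-(d_{k+2}-\ell)\,d_{k+3}\cdots d_s,
$$
which is precisely Eq.~(\ref{eq-unif-b}).

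There is no genuine obstacle in this argument, since all the analytic content has already been absorbed into Theorem~\ref{footprint-ci} (and, in turn, into Proposition~\ref{aug-28-15} and the degree formulas of Lemmas~\ref{dec20-15} and \ref{dec28-15}). What remains is essentially bookkeeping, and the only points deserving care are the two I flagged above: confirming $|\mathbb{X}|\ge 2$ so that Theorem~\ref{min-dis-vi} and Eq.~(\ref{min-dis-vi-coro}) are in force, and checking that the hypothesis $d=\sum_{i=2}^{k+1}(d_i-1)+\ell$ with $1\le \ell\le d_{k+2}-1$ lands exactly in the regime where ${\rm fp}_{I(\mathbb{X})}(d)=(d_{k+2}-\ell)d_{k+3}\cdots d_s$, including its boundary.
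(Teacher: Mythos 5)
Your proposal is correct and follows exactly the paper's own route: the paper proves this corollary in one line by citing Theorem~\ref{min-dis-vi}, Eq.~(\ref{min-dis-vi-coro}), and Theorem~\ref{footprint-ci}, which is precisely the chain $|V_\mathbb{X}(f)|\leq \deg(S/I(\mathbb{X}))-\delta_{I(\mathbb{X})}(d)\leq \deg(S/I(\mathbb{X}))-{\rm fp}_{I(\mathbb{X})}(d)$ you set up. Your two flagged checks (that $|\mathbb{X}|\geq 2$ follows from $d_{k+2}\geq 2$, and that the formula $(d_{k+2}-\ell)d_{k+3}\cdots d_s$ remains valid at the boundary degree $d=\sum_{i=2}^{s}(d_i-1)$ where it collapses to $1$) are details the paper leaves implicit, and you resolve them correctly.
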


\begin{proof} It follows from Corollary~\ref{min-dis-vi},
Eq.~(\ref{min-dis-vi-coro}), and
Theorem~\ref{footprint-ci}. 
\end{proof}

We leave as an open question whether this uniform bound is optimal, that is, 
whether the equality is attained for some polynomial $f$. Another
open question is whether Corollary~\ref{uniform-bound-for-zeros} is
true if we only assume that $I(\mathbb{X})$ is a complete
intersection. This is related to the following 
conjecture of Toh\v{a}neanu and Van Tuyl. 

\begin{conjecture}\cite[Conjecture~4.9]{tohaneanu-vantuyl} Let $\mathbb{X}$ be a
finite set of points in $\mathbb{P}^{s-1}$. If $I(\mathbb{X})$ is a
complete intersection generated by $f_1,\ldots,f_{s-1}$, with 
$e_i=\deg(f_i)$ for $i=1,\ldots,s-1$, and $2\leq e_i\leq e_{i+1}$ for
all $i$, then $\delta_\mathbb{X}(1)\geq (e_1-1)e_2\cdots e_{s-1}$.
\end{conjecture}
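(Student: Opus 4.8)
The plan is to reduce the conjecture to a uniform bound on the number of points of $\mathbb{X}$ on a hyperplane, and then to try to produce, for each hyperplane, a sub-complete-intersection of the correct degree. First I would translate the statement into degree language. Since $I(\mathbb{X})$ is a complete intersection of forms of degrees $e_1\le\cdots\le e_{s-1}$, its degree is $\deg(S/I(\mathbb{X}))=e_1\cdots e_{s-1}$ by the Hilbert series formula for complete intersections \cite[Corollary~3.3]{Sta1}. By Theorem~\ref{min-dis-vi} and Eq.~(\ref{min-dis-vi-coro}), the inequality $\delta_{\mathbb{X}}(1)\ge(e_1-1)e_2\cdots e_{s-1}$ is equivalent to the assertion that $|V_{\mathbb{X}}(\ell)|\le e_2\cdots e_{s-1}$ for every linear form $\ell$ with $\ell\not\equiv 0$ on $\mathbb{X}$. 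By Lemma~\ref{degree-formula-for-the-number-of-zeros-proj} I may assume $\ell$ is a zero-divisor of $S/I(\mathbb{X})$ (otherwise $V_{\mathbb{X}}(\ell)=\emptyset$), so that $|V_{\mathbb{X}}(\ell)|=\deg(S/(I(\mathbb{X}),\ell))$ and the target becomes $\deg(S/(f_1,\ldots,f_{s-1},\ell))\le e_2\cdots e_{s-1}$.

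Next I would exploit that $S/I(\mathbb{X})$ is a one-dimensional complete intersection, hence Cohen--Macaulay, and that $\dim S/(I(\mathbb{X}),\ell)=1$ because $\ell$ vanishes at some but not all points of $\mathbb{X}$. The key device is a containment-of-ideals comparison. Since $f_1,\ldots,f_{s-1}$ is a homogeneous regular sequence in the Cohen--Macaulay ring $S$, any sub-sequence is again regular, so $(f_2,\ldots,f_{s-1})$ is a codimension $s-2$ complete intersection and $S/(f_2,\ldots,f_{s-1})$ is a two-dimensional Cohen--Macaulay ring. If $\ell$ is a nonzero-divisor on $S/(f_2,\ldots,f_{s-1})$, then $J:=(f_2,\ldots,f_{s-1},\ell)$ is a complete intersection with $\deg(S/J)=e_2\cdots e_{s-1}$ and $\dim S/J=1$; from $J\subseteq(I(\mathbb{X}),\ell)$ and the equality of dimensions, the surjection $S/J\twoheadrightarrow S/(I(\mathbb{X}),\ell)$ forces $\deg(S/(I(\mathbb{X}),\ell))\le\deg(S/J)=e_2\cdots e_{s-1}$. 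More flexibly, it suffices to drop a single generator $f_j$ for which $\ell$ is a nonzero-divisor on $S/(\{f_i\}_{i\ne j})$, since then the bound reads $\prod_{i\ne j}e_i\le e_2\cdots e_{s-1}$, the last inequality holding because $e_j\ge e_1$.

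The hard part is exactly the nonzero-divisor hypothesis, and I expect it to be the main obstacle. Geometrically, $\ell$ fails to be a nonzero-divisor on $S/(\{f_i\}_{i\ne j})$ precisely when the hyperplane $H=V(\ell)$ contains a one-dimensional component of the residual curve $V(\{f_i\}_{i\ne j})$; equivalently, after restricting to $H\cong\mathbb{P}^{s-2}$, no $s-2$ of the forms $\bar f_i=f_i|_H$ (of degree $e_i$) form a regular sequence in $A=K[t_1,\ldots,t_{s-1}]$. This obstruction is genuine: the restricted forms can share common factors (for instance $\bar f_1,\bar f_2,\bar f_3$ of the shape $AB,AC,BC$ on $\mathbb{P}^2$), so that every single-generator deletion leaves a curve inside $H$ and the clean containment argument breaks down. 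In such cases one must instead bound the degree of the non-complete-intersection residual scheme $A/\bar I$, where $\bar I=(\bar f_1,\ldots,\bar f_{s-1})$, by a refined B\'ezout or liaison estimate combined with the Cayley--Bacharach property of $\mathbb{X}$. One can verify by hand that the bound $e_2\cdots e_{s-1}$ survives in the sample configuration above, which suggests the statement is true but requires this finer input.

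Finally, I would record two situations where the obstacle vanishes and the argument goes through unconditionally. If all the $e_i$ are equal, or more generally if a generic choice of generators of $I(\mathbb{X})$ in the prescribed degrees makes $\ell$ a nonzero-divisor modulo the codimension $s-2$ sub-complete-intersection, then graded prime avoidance in $A$ supplies the required regular sequence and the degree bound follows. Alternatively, whenever a graded monomial order can be chosen so that $\mathrm{in}_\prec(I(\mathbb{X}))$ is itself a complete intersection, the inequality is already contained in Corollary~\ref{uniform-bound-for-zeros} with $d=1$, recovering the conjecture in that case. The general complete-intersection case is then exactly the step I cannot complete without the refined residual-degree estimate, which is why the statement remains conjectural.
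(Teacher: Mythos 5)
There is no proof in the paper to compare against: the statement you were asked to prove is precisely the Toh\v{a}neanu--Van Tuyl conjecture \cite[Conjecture~4.9]{tohaneanu-vantuyl}, which the paper records as open. The paper establishes only two partial cases: the case $s=2$ (Corollary~\ref{p1}), and the case where some initial ideal ${\rm in}_\prec(I(\mathbb{X}))$ is a complete intersection, which is Corollary~\ref{uniform-bound-for-zeros} with $d=1$, $k=0$, $\ell=1$, giving $\delta_\mathbb{X}(1)\geq (d_2-1)d_3\cdots d_s$. Your proposal correctly identifies both of these, explicitly concedes the general case, and ends by declaring the statement conjectural --- which is an accurate assessment of its status rather than a proof, and matches the paper exactly.

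As for the sound parts of your sketch: the reduction of $\delta_\mathbb{X}(1)\geq(e_1-1)e_2\cdots e_{s-1}$ to the uniform bound $|V_\mathbb{X}(\ell)|\leq e_2\cdots e_{s-1}$ is correct, using $\deg(S/I(\mathbb{X}))=|\mathbb{X}|=e_1\cdots e_{s-1}$, Theorem~\ref{min-dis-vi}, Eq.~(\ref{min-dis-vi-coro}), and Lemma~\ref{degree-formula-for-the-number-of-zeros-proj}; and the degree comparison from the containment $(f_2,\ldots,f_{s-1},\ell)\subseteq(I(\mathbb{X}),\ell)$ is fine, since a surjection of graded rings of equal dimension forces the inequality of Hilbert polynomial leading coefficients. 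You also put your finger on the genuine obstruction: $\ell$ is fixed, and the hyperplane $V(\ell)$ may contain a one-dimensional component of \emph{every} residual curve $V(\{f_i\}_{i\neq j})$, so no single-generator deletion need produce the required regular sequence. One caution, though, about your claimed unconditional case when all $e_i$ are equal: the coding-theoretic setting has $K=\mathbb{F}_q$ finite, and graded prime avoidance via general linear combinations in a fixed degree can fail over finite fields, so even that case is not settled by your genericity argument. In short, your proposal proves nothing beyond what the paper already proves, but it diagnoses the difficulty correctly and does not overclaim.
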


Notice that by Corollary~\ref{uniform-bound-for-zeros} this conjecture is true if 
${\rm in}_\prec(I(\mathbb{X}))$ is a complete intersection, and it is
also true for $s=2$ (see Corollary~\ref{p1}). 

\paragraph{\bf Affine cartesian codes and coverings by hyperplanes}
Given a collection of finite subsets $A_2,\ldots,A_s$ of a field $K$, we denote the
image of 
$$X^*=A_2\times\cdots\times A_s$$
under the map  
$\mathbb{A}^{s-1}\mapsto \mathbb{P}^{s-1}$, $x\mapsto [(1,x)]$, by 
$\mathbb{X}=[1\times A_2\times\cdots \times A_s]$. The affine 
Reed-Muller-type code $C_{\mathbb{X}^*}(d)$ of degree $d$ is called an {\it affine cartesian
code\/} \cite{cartesian-codes}. The basic parameters of the projective
Reed-Muller-type code $C_\mathbb{X}(d)$ are equal to those of
$C_{\mathbb{X}^*}(d)$ \cite{affine-codes}.

A formula for the minimum distance of an affine cartesian code is
given in \cite[Theorem~3.8]{cartesian-codes} and in
\cite[Proposition~5]{geil-thomsen}. A short and elegant
 proof of this formula was given by Carvalho
in \cite[Proposition~2.3]{carvalho}, where he shows that the 
best way to study the minimum distance of an affine cartesian code
is by using the footprint. 
As an application of Theorem~\ref{footprint-ci} we also recover the
formula for the minimum distance of an affine cartesian code  by
examining the underlying vanishing ideal and show that 
this ideal is Geil--Carvalho.

\begin{corollary}{\cite{carvalho,geil-thomsen,cartesian-codes}}\label{lopez-renteria-vila}  
Let $K$ be a field and let $C_\mathbb{X}(d)$ be the projective Reed-Muller type
code of degree $d$ on the 
finite set $\mathbb{X}=[1\times A_2\times\cdots\times
A_s]\subset\mathbb{P}^{s-1}$. If $1\leq
d_i\leq d_{i+1}$ for $i\geq 2$, with $d_i=|A_i|$, and $d\geq 1$, then
the minimum distance of 
$C_\mathbb{X}(d)$ is given by 
$$
\delta_\mathbb{X}(d)=\left\{\hspace{-1mm}
\begin{array}{ll}\left(d_{k+2}-\ell\right)d_{k+3}\cdots d_s&\mbox{ if }
d\leq \sum\limits_{i=2}^{s}\left(d_i-1\right)-1,\\
\qquad \qquad 1&\mbox{ if } d\geq \sum\limits_{i=2}^{s}\left(d_i-1\right),
\end{array}
\right.
$$
and $I(\mathbb{X})$ is Geil--Carvalho, that is,
$\delta_{I(\mathbb{X})}(d)={\rm fp}_{I(\mathbb{X})}(d)$ for $d\geq 1$,
where $k\geq 0$, $\ell$ are the unique integers such that 
$d=\sum_{i=2}^{k+1}\left(d_i-1\right)+\ell$ and $1\leq \ell \leq
d_{k+2}-1$. 
\end{corollary}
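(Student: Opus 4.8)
The plan is to obtain both assertions from Theorem~\ref{footprint-ci}, the only genuinely new input being the construction of a polynomial that attains the minimum distance. Throughout I work with the graded lexicographic order with $t_1\prec t_2\prec\cdots\prec t_s$; since $\delta_{I(\mathbb{X})}$ is independent of the order (Lemma~\ref{delta-indep-order}) this entails no loss. The lower bound $\delta_{I(\mathbb{X})}(d)\geq{\rm fp}_{I(\mathbb{X})}(d)$ and the explicit formula for ${\rm fp}_{I(\mathbb{X})}$ will follow once I identify ${\rm in}_\prec(I(\mathbb{X}))$ as a complete intersection; the matching upper bound $\delta_{I(\mathbb{X})}(d)\leq{\rm fp}_{I(\mathbb{X})}(d)$ is the part carrying the real content and yields at once the minimum-distance formula and the Geil--Carvalho property.

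First I would compute the initial ideal. Writing $A_i=\{\beta_{i,1},\ldots,\beta_{i,d_i}\}$, set $g_i=\prod_{j=1}^{d_i}(t_i-\beta_{i,j}t_1)$ for $2\leq i\leq s$. Each $g_i$ is homogeneous of degree $d_i$ and vanishes on $\mathbb{X}$, because every point $[(1,x_2,\ldots,x_s)]$ has $x_i\in A_i$; moreover its leading monomial is $t_i^{d_i}$. Thus $(t_2^{d_2},\ldots,t_s^{d_s})\subseteq{\rm in}_\prec(I(\mathbb{X}))$. Since $S/I(\mathbb{X})$ and $S/{\rm in}_\prec(I(\mathbb{X}))$ share the same Hilbert function, Proposition~\ref{jan4-15}(ii) gives $\deg(S/{\rm in}_\prec(I(\mathbb{X})))=|\mathbb{X}|=d_2\cdots d_s$, which is exactly the degree of the complete intersection $(t_2^{d_2},\ldots,t_s^{d_s})$. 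As both monomial ideals are one-dimensional and the smaller one is unmixed, a degree comparison forces ${\rm in}_\prec(I(\mathbb{X}))=(t_2^{d_2},\ldots,t_s^{d_s})$. Applying Theorem~\ref{footprint-ci} then delivers the stated formula for ${\rm fp}_{I(\mathbb{X})}$ together with $\delta_{I(\mathbb{X})}(d)\geq{\rm fp}_{I(\mathbb{X})}(d)$.

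For the reverse inequality when $d\leq\sum_{i=2}^s(d_i-1)-1$, I would produce a degree-$d$ form realizing the maximum in Eq.~(\ref{min-dis-vi-coro}). Take
$$
f=\prod_{i=2}^{k+1}\prod_{j=1}^{d_i-1}(t_i-\beta_{i,j}t_1)\cdot\prod_{j=1}^{\ell}(t_{k+2}-\beta_{k+2,j}t_1),
$$
which is homogeneous of degree $\sum_{i=2}^{k+1}(d_i-1)+\ell=d$. Evaluating at $[(1,x_2,\ldots,x_s)]\in\mathbb{X}$, the value $f$ is nonzero exactly when $x_i=\beta_{i,d_i}$ for $2\leq i\leq k+1$, when $x_{k+2}\in\{\beta_{k+2,\ell+1},\ldots,\beta_{k+2,d_{k+2}}\}$, and when $x_i\in A_i$ is unrestricted for $i\geq k+3$; this accounts for precisely $(d_{k+2}-\ell)d_{k+3}\cdots d_s$ points, so $f\not\equiv 0$ and $|V_\mathbb{X}(f)|=d_2\cdots d_s-(d_{k+2}-\ell)d_{k+3}\cdots d_s$. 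By Eq.~(\ref{min-dis-vi-coro}) this gives $\delta_{I(\mathbb{X})}(d)\leq(d_{k+2}-\ell)d_{k+3}\cdots d_s={\rm fp}_{I(\mathbb{X})}(d)$, hence equality. For the remaining range $d\geq\sum_{i=2}^s(d_i-1)$, Proposition~\ref{jan2-16}(b) identifies $\sum_{i=2}^s(d_i-1)$ as ${\rm reg}(S/I(\mathbb{X}))$, so Proposition~\ref{jan4-15}(iv) yields $\delta_\mathbb{X}(d)=1={\rm fp}_{I(\mathbb{X})}(d)$, completing both claims.

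The identification of the initial ideal and the resulting lower bound are essentially bookkeeping on top of the main theorem. The crux, and the only step requiring care, is the construction of $f$ together with the count of its zeros: the factors must be distributed across the coordinates $t_2,\ldots,t_{k+2}$ so that the total degree equals $d$ while the fiber of non-vanishing points has cardinality exactly $(d_{k+2}-\ell)d_{k+3}\cdots d_s$. I expect the main subtlety to be matching the unique decomposition $d=\sum_{i=2}^{k+1}(d_i-1)+\ell$, $1\leq\ell\leq d_{k+2}-1$, to the cut-off at the coordinate $k+2$ and the partial product $\prod_{j=1}^{\ell}$, so that the exponent pattern of $f$ reproduces the formula for ${\rm fp}_{I(\mathbb{X})}$.
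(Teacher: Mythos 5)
Your proof is correct, and its skeleton is the same as the paper's: identify ${\rm in}_\prec(I(\mathbb{X}))$ as the complete intersection $(t_2^{d_2},\ldots,t_s^{d_s})$, get the lower bound from Theorem~\ref{footprint-ci}, exhibit a product of linear forms attaining it, and settle the range $d\geq\sum_{i=2}^s(d_i-1)$ via the regularity and Proposition~\ref{jan4-15}(iv). The genuine difference is that the two steps you prove by hand are exactly the two steps the paper outsources to the literature. For the initial ideal the paper cites \cite[Proposition~2.5]{cartesian-codes}, which checks that the polynomials $\prod_{\gamma\in A_i}(t_i-\gamma t_1)$ form a Gr\"obner basis; you instead avoid any Gr\"obner-basis verification by noting that $(t_2^{d_2},\ldots,t_s^{d_s})\subseteq {\rm in}_\prec(I(\mathbb{X}))$ is a containment of one-dimensional graded ideals of equal degree in which the smaller ideal is unmixed, hence an equality --- the point being that a nonzero quotient would have zero Hilbert polynomial, so finite length, making the irrelevant maximal ideal an associated prime of the complete intersection, a contradiction. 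That argument is sound and is arguably cleaner than verifying Buchberger's criterion. For the extremal polynomial the paper cites \cite[p.~15]{cartesian-codes}, whereas you construct $f$ explicitly and count its non-vanishing points; the count $(d_{k+2}-\ell)d_{k+3}\cdots d_s$ is correct, and $\ell\leq d_{k+2}-1$ guarantees $f\not\equiv 0$, so Eq.~(\ref{min-dis-vi-coro}) applies. The net effect is a fully self-contained proof where the paper's is shorter but modulo external results. One small point of hygiene: the displayed formula concerns $\delta_\mathbb{X}(d)$ while your bounds concern $\delta_{I(\mathbb{X})}(d)$, so you should say explicitly that Theorem~\ref{min-dis-vi} identifies the two (you use this silently when invoking Eq.~(\ref{min-dis-vi-coro})); this is how the paper phrases it, and it needs $|\mathbb{X}|\geq 2$, which holds automatically whenever the range $1\leq d\leq\sum_{i=2}^s(d_i-1)-1$ is nonempty.
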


\begin{proof} Let $\succ$ be the reverse lexicographical order on
$S$ with $t_2\succ\cdots\succ t_s\succ t_1$. Setting 
$f_i=\prod_{\gamma\in A_i}(t_i-\gamma t_1)$ for $i=2,\ldots,s$, one
has that $f_2,\ldots,f_s$ is a Gr\"obner basis of $I(\mathbb{X})$
whose initial ideal is generated by $t_2^{d_2},\ldots,t_s^{d_s}$ (see
\cite[Proposition~2.5]{cartesian-codes}). By Theorem~\ref{min-dis-vi}
one has the equality $\delta_\mathbb{X}(d)=\delta_{I(\mathbb{X})}(d)$
for $d\geq 1$. 
Thus the inequality ``$\geq$''
follows at once from Theorem~\ref{footprint-ci}. 
This is 
the difficult part of the proof. The rest of the argument reduces to finding 
an appropriate polynomial $f$ where equality occurs, and to using that 
the minimum distance $\delta_\mathbb{X}(d)$ is $1$ for $d$ greater than or equal to 
${\rm reg}(S/I(\mathbb{X}))$. 

We set $r=\sum_{i=2}^s(d_i-1)$. By Propositions~\ref{jan4-15} and
\ref{jan2-16},  
the regularity and the degree of $S/I(\mathbb{X})$ are $r$ and
$|\mathbb{X}|=d_2\cdots d_s$, respectively.  
Assume that
$d<r$. To show the inequality ``$\leq$'' notice that 
there is a polynomial $f\in S_d$ which is a product of linear forms 
such that
$|V_\mathbb{X}(f)|$, the number of zeros of $f$ in $\mathbb{X}$, is
equal to 
$$d_2\cdots d_s-(d_{k+2}-\ell)d_{k+3}\cdots d_s,$$
see \cite[p.~15]{cartesian-codes}. Hence 
$\delta_\mathbb{X}(d)$ is less than or  
equal to 
$(d_{k+2}-\ell)d_{k+3}\cdots d_s$. Thus the required equality
holds. If $d\geq r$, by Proposition~\ref{jan4-15},
$\delta_\mathbb{X}(d)=1$ for $d\geq r$. 
Therefore, by Theorem~\ref{footprint-ci}, $I(\mathbb{X})$ is
Geil--Carvalho.
\end{proof}

The next result is an extension of a result of Alon and F\"uredi 
\cite[Theorem~1]{alon-furedi} that can be applied to any finite subset 
of a projective space whose vanishing ideal has a complete intersection
initial ideal relative to a graded monomial order.

\begin{corollary}\label{jan1-16} Let $\mathbb{X}$ be a finite subset
of a projective space $\mathbb{P}^{s-1}$ and let $\prec$ be a 
monomial order such that ${\rm in}_\prec(I(\mathbb{X}))$ is a
complete intersection 
generated by 
$t^{\alpha_2},\ldots,t^{\alpha_s}$, with $d_i=\deg(t^{\alpha_i})$ 
and $1\leq d_i\leq d_{i+1}$ for all $i$. If the hyperplanes
$H_1,\ldots,H_d$ in $\mathbb{P}^{s-1}$ avoid a point $[P]$ in
$\mathbb{X}$ but otherwise cover all the other $|\mathbb{X}|-1$ points
of $\mathbb{X}$, then $d\geq{\rm reg}(S/I(\mathbb{X}))=\sum_{i=2}^s(d_i-1)$.
\end{corollary}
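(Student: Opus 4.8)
The plan is to convert the combinatorial covering hypothesis into an algebraic statement about a polynomial that vanishes on $\mathbb{X}$ except at one point, and then invoke Theorem~\ref{footprint-ci} via the regularity bound. First I would encode each hyperplane $H_j$ as the zero set of a linear form $\ell_j\in S_1$, and form the product $f=\ell_1\cdots\ell_d\in S_d$. By hypothesis every point of $\mathbb{X}$ other than $[P]$ lies on some $H_j$, so $f$ vanishes there, while $f([P])\neq 0$ because the $H_j$ avoid $[P]$. Thus $f$ is a homogeneous polynomial of degree $d$ that does not vanish at all points of $\mathbb{X}$ (it is nonzero at $[P]$), and its number of zeros in $\mathbb{X}$ is exactly $|\mathbb{X}|-1$.

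The key step is then to feed this $f$ into the minimum distance machinery. Since $f\not\equiv 0$ on $\mathbb{X}$, Theorem~\ref{min-dis-vi} together with Eq.~(\ref{min-dis-vi-coro}) gives
\begin{equation*}
\delta_\mathbb{X}(d)=\deg(S/I(\mathbb{X}))-\max\{|V_\mathbb{X}(g)|\colon g\not\equiv 0,\ g\in S_d\}\leq \deg(S/I(\mathbb{X}))-|V_\mathbb{X}(f)|=|\mathbb{X}|-(|\mathbb{X}|-1)=1.
\end{equation*}
Hence $\delta_\mathbb{X}(d)\leq 1$, and since $\delta_\mathbb{X}(d)\geq 1$ always holds (by Theorem~\ref{min-dis-vi} or Lemma~\ref{dec30-15}(c)), we conclude $\delta_\mathbb{X}(d)=1$.

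Finally I would translate $\delta_\mathbb{X}(d)=1$ back into a lower bound on $d$ using the explicit formula of Theorem~\ref{footprint-ci}. Under the complete intersection hypothesis on ${\rm in}_\prec(I(\mathbb{X}))$, that theorem shows ${\rm fp}_{I(\mathbb{X})}(d)=(d_{k+2}-\ell)d_{k+3}\cdots d_s>1$ whenever $d\leq\sum_{i=2}^s(d_i-1)-1$, and equals $1$ only when $d\geq\sum_{i=2}^s(d_i-1)$. Since Theorem~\ref{footprint-ci} gives $\delta_\mathbb{X}(d)=\delta_{I(\mathbb{X})}(d)\geq {\rm fp}_{I(\mathbb{X})}(d)$, having $\delta_\mathbb{X}(d)=1$ forces ${\rm fp}_{I(\mathbb{X})}(d)=1$, which by the formula can only occur when $d\geq\sum_{i=2}^s(d_i-1)$. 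By Proposition~\ref{jan2-16}(b) this sum equals ${\rm reg}(S/I(\mathbb{X}))$, giving $d\geq{\rm reg}(S/I(\mathbb{X}))$ as claimed.

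The main obstacle I anticipate is the clean verification that $f$ does not vanish at $[P]$ and genuinely represents a codeword of weight $1$ — that is, checking that the linear forms defining the avoiding hyperplanes can be chosen so that $f([P])\neq 0$ as a projective evaluation, and that the single nonzero coordinate of ${\rm ev}_d(f)$ survives the normalization in Eq.~(\ref{ev-map}). The reduction from $\delta_\mathbb{X}(d)=1$ to the degree inequality is then purely a matter of reading off the piecewise formula in Theorem~\ref{footprint-ci}, which is routine once the footprint value $1$ is pinned to the regularity threshold.
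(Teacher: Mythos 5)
Your proof is correct and follows essentially the same route as the paper's: both form the product $h_1\cdots h_d$ of the defining linear forms, observe that it vanishes at every point of $\mathbb{X}$ except $[P]$, and combine Theorem~\ref{footprint-ci} with $\delta_{I(\mathbb{X})}(d)\geq {\rm fp}_{I(\mathbb{X})}(d)$ to force $d\geq\sum_{i=2}^s(d_i-1)$. The only difference is organizational: the paper argues by contradiction (if $d<\sum_{i=2}^s(d_i-1)$ then ${\rm fp}_{I(\mathbb{X})}(d)\geq 2$, so no codeword of weight one can exist), while you argue directly that $\delta_\mathbb{X}(d)=1$ and then read off the degree threshold from the footprint formula.
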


\begin{proof} Let $h_1,\ldots,h_d$ be the linear forms in $S_1$
that define $H_1,\ldots,H_d$, respectively. Assume that
$d<\sum_{i=2}^s(d_i-1)$. Consider the polynomial $h=h_1\cdots
h_d$. Notice that $h\notin I(\mathbb{X})$ because $h(P)\neq 0$, and
$h(Q)=0$ for all $[Q]\in\mathbb{X}$ with $[Q]\neq[P]$. 
By Theorem~\ref{footprint-ci}, $\delta_\mathbb{X}(d)\geq {\rm
fp}_{I(\mathbb{X})}(d)\geq 2$. Hence, $h$ does not vanish in at 
least two points of $\mathbb{X}$, a contradiction. 
\end{proof}

\begin{example}\label{covering-by-hyperplanes-example}
Let $S$ be the polynomial ring $\mathbb{F}_3[t_1,t_4,t_3,t_2]$ 
with the lexicographical order $t_1\prec
t_4\prec
t_3\prec t_2$, and let $I=I(\mathbb{X})$ be the vanishing
ideal of 
\begin{eqnarray*}
\mathbb{X}&=&\{[(1,0,0,0)],\, [(1,1,1,0)],\, [(1,-1,-1,0)],\, [(1,1,0,1)],\, 
\\ & &\ \ \ \ [(1,-1,1,1)],\, [(1,0,-1,1)],\, 
[(1,-1,0,-1)],\,[(1,0,1,-1)],\,[(1,1,-1,-1)]\}.
\end{eqnarray*}

Using the procedure below in {\it Macaulay\/}$2$ \cite{mac2} and
Theorem~\ref{footprint-ci}, we obtain the following information. The ideal
$I(\mathbb{X})$ is generated by $t_2-t_3-t_4$, $t_3^3-t_3t_1^2$,
and $t_4^3-t_4t_1^2$.  
The regularity and the degree of
$S/I(\mathbb{X})$ are $4$ and $9$, respectively, and $I(\mathbb{X})$
is a Geil--Carvalho ideal whose initial ideal is a complete
intersection generated by $t_2,\, t_3^3,\, t_4^3$. The basic parameters of
the Reed-Muller-type code $C_\mathbb{X}(d)$ are shown in the following
table.

\begin{eqnarray*}
\hspace{-11mm}&&\left.
\begin{array}{c|c|c|c|c}
d & 1 & 2 & 3 & 4\\
   \hline
 |\mathbb{X}| & 9 & 9 & 9 &9
 \\ 
   \hline
 H_\mathbb{X}(d)    \    & 3 & 6 &8 & 9
 \\   
   \hline
 \delta_{\mathbb{X}}(d) &6& 3& 2& 1 \\ 
\hline
 {\rm fp}_{I(\mathbb{X})}(d) &6& 3& 2& 1 \\ 
\end{array}
\right.
\end{eqnarray*}

By Corollary~\ref{jan1-16}, if the hyperplanes
$H_1,\ldots,H_d$ in $\mathbb{P}^{3}$ avoid a point $[P]$ in
$\mathbb{X}$ but otherwise cover all the other $|\mathbb{X}|-1$ points
of $\mathbb{X}$, then $d\geq{\rm reg}(S/I(\mathbb{X}))=4$.
\begin{verbatim}
S=ZZ/3[t2,t3,t4,t1,MonomialOrder=>Lex];
I1=ideal(t2,t3,t4),I2=ideal(t4,t3-t1,t2-t1),I3=ideal(t4,t1+t3,t2+t1)
I4=ideal(t4-t1,t4-t2,t3),I5=ideal(t4-t1,t3-t1,t2+t1),I6=ideal(t2,t1-t4,t3+t1)
I7=ideal(t3,t1+t4,t1+t2),I8=ideal(t2,t4+t1,t3-t1),I9=ideal(t1+t4,t3+t1,t2-t1)
I=intersect(I1,I2,I3,I4,I5,I6,I7,I8,I9)
M=coker gens gb I, regularity M, degree M
h=(d)->degree M - max apply(apply(apply(apply (toList
(set(0..q-1))^**(hilbertFunction(d,M))-(set{0})^**(hilbertFunction(d,M)),
toList),x->basis(d,M)*vector x),z->ideal(flatten entries z)),
x-> if not quotient(I,x)==I then degree ideal(I,x) else 0)--this
--gives the minimum distance in degree d
apply(1..3,h)
\end{verbatim}
\end{example}

\begin{example}\label{yuriko-example}
Let $S$ be the polynomial ring $S=\mathbb{F}_3[t_1,t_2,t_3]$ with the 
lexicographical order $t_1\succ t_2\succ t_3$, and let $I=I(\mathbb{X})$ be the vanishing
ideal of 
$$
\mathbb{X}=\{[(1,1,0)],\, [(1,-1,0)],\, [(1,0,1)],\, [(1,0,-1)],[(1,-1,-1)],[(1,1,1)]\}.
$$

As in Example~\ref{covering-by-hyperplanes-example}, using 
{\it Macaulay\/}$2$ \cite{mac2}, we get that 
$I(\mathbb{X})$ is generated by 
$$
t_2^2t_3-t_2t_3^2,\ \, t_1^2-t_2^2+t_2t_3-t_3^2.
$$

The regularity and the degree of
$S/I(\mathbb{X})$ are $3$ and $6$, respectively, $I$
is a Geil--Carvalho ideal, and ${\rm in}_\prec(I)$ is a complete
intersection generated by $t_2^2t_3$ and $t_1^2$. The basic parameters of
the Reed-Muller-type code $C_\mathbb{X}(d)$ are shown in the following
table.

\begin{eqnarray*}
\hspace{-11mm}&&\left.
\begin{array}{c|c|c|c}
d & 1 & 2 & 3 \\
   \hline
 |\mathbb{X}| & 6 & 6 & 6
 \\ 
   \hline
 H_\mathbb{X}(d)    \    & 3 & 5 &6 
 \\   
   \hline
 \delta_{\mathbb{X}}(d) &3& 2& 1 \\ 
\hline
 {\rm fp}_{I(\mathbb{X})}(d) &3& 2& 1 \\ 
\end{array}
\right.
\end{eqnarray*}

By Corollary~\ref{jan1-16}, if the hyperplanes
$H_1,\ldots,H_d$ in $\mathbb{P}^{2}$ avoid a point $[P]$ in
$\mathbb{X}$ but otherwise cover all the other $|\mathbb{X}|-1$ points
of $\mathbb{X}$, then $d\geq{\rm reg}(S/I(\mathbb{X}))=3$.
\end{example}

Next we give an example of a graded vanishing ideal over a finite field, which
is not Geil--Carvalho, by computing all possible initial ideals. 

\begin{example}\label{not-geil-carvalho}
Let $\mathbb{X}=\mathbb{P}^2$ be the projective space over the field
$\mathbb{F}_2$ and let $I=I(\mathbb{X})$ be the vanishing
ideal of $\mathbb{X}$. Using the procedure below in {\it
Macaulay\/}$2$ \cite{mac2} we get that the binomials  
$t_1t_2^2-t_1^2t_2,\, t_1t_3^2-t_1^2t_3,\, t_2t_3^2-t_2^2t_3$ 
form a universal Gr\"obner basis of $I$, that is, they form a Gr\"obner basis
for any monomial order. The ideal $I$ has exactly six different
initial ideals and $\delta_\mathbb{X}\neq {\rm fp}_I$ for each of
them, that is, $I$ is not a Geil--Carvalho ideal. 
The basic parameters of the projective 
Reed-Muller code $C_\mathbb{X}(d)$ are shown in the following
table.
\begin{eqnarray*}
\hspace{-11mm}&&\left.
\begin{array}{c|c|c|c}
d & 1 & 2 & 3 \\
   \hline
 |\mathbb{X}| & 7 & 7 & 7 
 \\ 
   \hline
 H_\mathbb{X}(d)    \    & 3 & 6 &7 
 \\   
   \hline
 \delta_{\mathbb{X}}(d) &4& 2& 1\\ 
\hline
 {\rm fp}_{I(\mathbb{X})}(d) &4& 1& 1\\ 
\end{array}
\right.
\end{eqnarray*}
\begin{verbatim}
load "gfaninterface.m2"
S=ZZ/2[symbol t1, symbol t2, symbol t3]
I=ideal(t1*t2^2-t1^2*t2,t1*t3^2-t1^2*t3,t2*t3^2-t2^2*t3)
universalGroebnerBasis(I)
(InL,L)= gfan I, #InL
init=ideal(InL_0)
M=coker gens gb init
f=(x)-> if not quotient(init,x)==init then degree ideal(init,x) else 0
fp=(d) ->degree M -max apply(flatten entries basis(d,M),f)
apply(1..regularity(M),fp)
\end{verbatim}
\end{example}

\smallskip

\noindent {\bf Acknowledgments.} We thank the referee for a careful
reading of the paper and for the improvements suggested. 

\bibliographystyle{plain}

\end{document}